\newtheorem{theorem}{Theorem}[section]
\newtheorem{lemma}[theorem]{Lemma}
\newtheorem{proposition}[theorem]{Proposition}
\newtheorem{remark}[theorem]{Remark}
\theoremstyle{definition}
\newtheorem{assumption}[theorem]{Assumption}
\numberwithin{equation}{section}
\renewcommand{\P}{\mathrm{P}}
\newcommand{\E}{\mathrm{E}}
\newcommand{\R}{\mathbb{R}}
\newcommand{\1}{\mathbf{1}}
\begin{document}

\title{Multiple Points of Gaussian Random Fields}

\author{Robert C.~Dalang, Cheuk Yin Lee, Carl Mueller, and Yimin Xiao}

\address[Robert C.~Dalang]{Institut de math\'ematiques,
\'Ecole Polytechnique F\'ed\'erale de Lausanne, Station 8,
CH-1015 Lausanne, Switzerland}
\email{robert.dalang@epfl.ch}

\address[Cheuk Yin Lee]{Department of Statistics and Probability,
Michigan State University, East Lansing, MI 48824, United States.\newline
Current address: Institut de math\'ematiques,
\'Ecole Polytechnique F\'ed\'erale de Lausanne, Station 8,
CH-1015 Lausanne, Switzerland}
\email{cheuk.lee@epfl.edu}

\address[Carl Mueller]{Department of Mathematics, University of
Rochester, Rochester, NY  14627,
United States}
\email{carl.e.mueller@rochester.edu}

\address[Yimin Xiao]{Department of Statistics and Probability,
Michigan State University, East Lansing, MI 48824, United States}
\email{xiao@stt.msu.edu}


\keywords{Multiple points, Gaussian random fields, critical dimension, fractional Brownian sheet, stochastic heat and wave equations.
}

\thanks{The research of R.C. Dalang is partially supported by the Swiss National Foundation for Scientific Research,
the research of C. Mueller is partially supported by a Simons grant, and Y. Xiao is partially supported by NSF
grants DMS-1607089 and DMS-1855185.}

\subjclass[2010]{
60G15, 
60G17, 
60G60. 
}

\begin{abstract}
This paper is concerned with the existence of multiple points of Gaussian random fields. 
Under the framework of Dalang et al. (2017), we prove that, for a wide class of Gaussian random fields, multiple points do not exist in
critical dimensions. The result is applicable to fractional Brownian sheets and the solutions of systems of stochastic heat and wave equations.
\end{abstract}

\maketitle

\section{Introduction}

Let $v = \{ v(x), x \in \mathbb{R}^k \}$ be a centered continuous $\mathbb{R}^d$-valued
Gaussian random field defined on a probability space $(\Omega, \mathscr{F}, \P)$ with i.i.d.\ components.
Write $v(x) = (v_1(x), \dots, v_d(x))$ for $x \in \mathbb{R}^k$. For a set $T \subset \mathbb{R}^k $
(e.g., $T = (0, \infty)^k$, or $T = [0, 1]^k$)
and  an integer $m \ge 2$, we say that $z \in \mathbb{R}^d$ is an \emph{$m$-multiple point}
of $v(x)$ on $T$ if, with positive probability,
there are $m$ distinct points $x^1, \dots, x^m \in T$ such that $z = v(x^1) = \dots = v(x^m)$.



Several authors have studied the existence of multiple points of Gaussian random fields.
Sufficient conditions or necessary  conditions for the case of a fractional Brownian motion
$B^H = \{B^H(t), t \in \R^k\}$ in $\R^d$ were proved by K\^ono \cite{Kono}, Goldman \cite{G81},
Rosen \cite{Rosen84}. Their results show that if $km > (m-1)Hd$ then $B^H$ has $m$-multiple
points on any interval $T \subseteq \R^k$; and if $km < (m-1)Hd$ then $B^H$ has no $m$-multiple points
on $\R^k\backslash \{0\}$. Rosen \cite{Rosen84} also considered the existence of multiple
points of the Brownian sheet by studying its self-intersection local times.

In the critical dimensions (i.e., $km = (m-1)Hd$ for $B^H$), the problem for proving the
non-existence of multiple points is more difficult. For fractional Brownian motion and the
Brownian sheet, the problem was resolved by Talagrand \cite{T98} and by
Dalang et al.~\cite{dknwx12} and Dalang and Mueller  \cite{dm14}, respectively. The methods in
\cite{T98} and \cite{dknwx12,dm14} are different.

Our research in this paper is motivated by the interest in studying
the intersection problems for the solutions of systems of stochastic heat and wave
equations with constant coefficients, where the method in \cite{dknwx12,dm14} fails in general.
Our main purpose is to continue the work of \cite{DMX17} and extend Talagrand's approach in \cite{T98}
to a large class of Gaussian random fields which include fractional Brownian sheets and the solutions
of systems of stochastic heat and wave equations with constant coefficients. As a byproduct, our
theorem provides an alternative proof for the results in \cite{dknwx12,dm14} by using general
Gaussian principles and the harmonizable representation of the Brownian sheet.

The result of this paper relies on a covering argument originated by Talagrand \cite{T98}, and later further 
developed by Dalang et al.~\cite{DMX17}. In this paper, the main ingredient for the covering argument is 
Proposition \ref{prop}, which states that for any distinct points $s^1, \dots, s^m \in T$, with high probability, 
there are small neighbourhoods of $s^i$ in which the maximum of the increments $v(x^i) - v(s^i)$ ($1 \le i \le m$) 
could be smaller than one would expect from the H\"older regularity. This observation allows us to use balls 
of different radii to construct an efficient random cover for the set of multiple points, which is essential for 
proving the non-existence of multiple points in the critical dimension.

The paper is organized as follows. In Section 2, we state our assumptions and the
main result of this paper, Theorem \ref{main thm}. In Section 3, we establish some
necessary ingredients for proving Theorem \ref{main thm} and, in Section 4, we prove
the main theorem. In Section 5, we provide several examples of Gaussian random fields
to which the theorem can be applied. These examples include the Brownian sheet,
fractional Brownian sheets, and the solutions of systems of stochastic heat and wave equations.

Throughout the article, we use $K$ or $c$ to denote a constant that may vary at each occurrence.
Specific constants will be denoted by $K_1, K_2$, $c_1$, etc.

\section{Assumptions and the main result}

By a compact interval (or rectangle) in $\mathbb{R}^k$ we mean a set $I$ of the form
$\prod_{j=1}^k [c_j, d_j]$, where $c_j < d_j$. Throughout this paper, we assume that
$T \subset \mathbb{R}^k $ is a fixed index set that can be written as a countable
union of compact intervals. To avoid triviality in studying the multiple points of
$\{ v(x), x \in \mathbb{R}^k \}$, one may take, for example, $T = \mathbb{R}^k
\backslash\{0\}$ or $T = (0, \infty)^k$.


In the following, Assumption \ref{a1} is a slightly simplified reformulation of 
Assumption 2.1 in \cite{DMX17} and 
Assumption \ref{a2} below is a reinforced version of Assumption 2.4 in \cite{DMX17}.

\begin{assumption}\label{a1}
There exists a centered Gaussian random field $\{ v(A, x) , A \in
\mathscr{B}(\mathbb{R}_+), x \in T \}$, where $\mathscr{B}(\mathbb{R}_+)$ is the
Borel $\sigma$-algebra on $\mathbb{R}_+ =[0, \infty)$, such that the following hold:

(a) For all $x \in T$, $A \mapsto v(A, x)$ is an $\mathbb{R}^d$-valued
white noise (or, more generally, an independently scattered Gaussian noise
with a control measure $\mu$) with i.i.d.\ components, $v(\mathbb{R}_+, x)
= v(x)$, and $v(A, \cdot)$ and $v(B, \cdot)$ are independent whenever $A$
and $B$ are disjoint.

(b) There exist constants $\gamma_j > 0$, $j = 1, \dots, k$ with the following
properties: For every compact interval $F \subset T$, there exist constants
$c_0 > 0$ and $ a_0 \ge 0$ such that for all $a_0 \le a \le b \le \infty$
and $x, y \in F$,
\begin{align}
\| v([a, b), x) - v(x) - v([a, b), y) + v(y) \|_{L^2}
\le c_0 \bigg( \sum_{j=1}^k a^{\gamma_j}|x_j - y_j| + b^{-1} \bigg),
\end{align}
 and
\begin{align}
\| v([0, a_0), x) - v([0, a_0), y) \|_{L^2} \le c_0 \sum_{j=1}^k |x_j - y_j|.
\end{align}
In the above, $\|X\|_{L^2} = \big(\E|X|^2\big)^{1/2}$ for a random vector $X$, 
where $|X|$ is the Euclidean norm of $X$.
\end{assumption}

Notice that in Assumption \ref{a1} the constants  $a_0$ and $c_0$ may depend on $F$,
but $\gamma_j$ ($ j = 1, \ldots, k$) do not.
As shown by Dalang et al. \cite{DMX17}, the parameters $\gamma_j$ ($ j = 1, \ldots, k$)
play important roles in characterize sample path properties (e.g., regularity, fractal
properties, hitting probabilities) of the random field $\{ v(x),\, x \in T \}$.

Let $\alpha_j = (\gamma_j + 1)^{-1}$ and $Q = \sum_{j = 1}^k \alpha_j^{-1}$.
Define the metric $\Delta$ on $\mathbb{R}^k$ by
\begin{equation}\label{Def:De}
\Delta(x, y) = \sum_{j=1}^k |x_j - y_j|^{\alpha_j}.
\end{equation}

For $x \in T$ and $r > 0$,  denote by $S(x, r) = \{ y \in \mathbb{R}^k : \Delta(x, y) \le r \}$
the closed ball with center $x$ and radius $r$ in the metric $\Delta$ in (\ref{Def:De}) and
let $B_r(x) = \prod_{j=1}^k [x_j - r^{1/\alpha_j}, \, x_j + r^{1/\alpha_j}]$.
Notice that  $S(x, r) \subseteq B_r(x)$ and $B_{r/k}(x) \subseteq S(x, r)$.

\begin{assumption}\label{a2}
For every compact interval $F \subset T$,  there exist
constants $0 < \varepsilon_0 \le 1$, $c \ge 3$, $c' \ge 2c$ 
and $\delta_j \in (\alpha_j, 1]$, $j = 1, \dots, k$, such that 
for every $0 < \rho \le \varepsilon_0$ there is a 
finite constant $C$ (which may depend on $F$, $\rho$ and $\delta_j$) 
and the following property holds:
%

For every $x \in F$ with $B_{c\rho}(x) \subset F$, there is $x' \in B_{c\rho}(x)$ 
such that for every $i = 1, \dots, d$ the condition 
 \begin{align}\label{a2:ineq}
\big|\E((v_i(y) - v_i(\bar{y}))v_i(x')) \big| \le
C \sum_{j=1}^k |y_j - \bar{y}_j|^{\delta_j}
\end{align}
holds for all $y, \bar y$ in each of the following cases:
\begin{enumerate}
\item[(i)] $y, \bar{y} \in B_{2\rho}(x)$;
\item[(ii)] $y, \bar{y} \in B_{2\rho}(\tilde x)$, for any $\tilde x$ such that
$B_{2\rho}(\tilde x) \subset F$ and $\Delta(x, \tilde x) \ge c'\rho$.
\end{enumerate}
\end{assumption}

Assumption \ref{a2} states that for every fixed point $x \in T$, 
one can find a reference point $x'$ 
(which may depend on $x$) such that for all $y, \bar{y}$ 
that are either in a small neighborhood of $x$, 
or are sufficiently away from a neighborhood of $x$ (hence away from $x'$),
the covariances in \eqref{a2:ineq}  are smoother than what one gets from 
the Cauchy--Schwarz inequality and 
Lemma \ref{lemma1} below, which lead to $\alpha_j$ instead of $\delta_j$ 
in the exponents. 

As an illustration example, let $\{ v(x),\, x \in \R^k\}$ be a real-valued fractional 
Brownian motion with index $H \in (0, 1)$ and let $F \subset \R^k \backslash\{0\}$ be a compact 
interval. Then for any $\rho > 0$ and $x',$ $y, \, \bar{y} \in F$ that satisfy $\min\{|y|, |\bar{y} |, 
|y - x' |, |\bar{y}-x' | \} \ge \rho$, where $|\cdot|$ is the Euclidean norm in $\R^k$, we use the 
mean value theorem to derive
\[
\begin{split}
\big|\E((v(y) - v(\bar{y}))v_i(x')) \big| 
&= \frac 1 2\Big(|y|^{2H} - |\bar{y} |^{2H} - |y - x' |^{2H} + |\bar{y}-x' |^{2H} \Big)\\
&\le C\, |y - \bar{y}|,
\end{split}
\]
where $C$ is a constant depending on $H$, $\rho$ (if $H \le 1/2$) and the compactness of $F$ (if $H > 1/2$). 
Hence Assumption \ref{a2} holds with $c = 3, c' = 6$ and 
$\delta_j = 1$ for $j = 1, \dots, k$. This was observed in Lemma 3.2 of Talagrand \cite{T98}. As we will see 
in Section 5, verifications of Assumption \ref{a2} for fractional Brownian sheets and the solutions to 
stochastic heat and wave equations are more involved. 


Now we introduce an additional non-degeneracy assumption.

\begin{assumption}\label{a3}
For any $m$ distinct points $x^1, \dots, x^m \in T$, $v_1(x^1), \dots, v_1(x^m)$
are linearly independent random variables, or equivalently, the Gaussian vector
$(v_1(x^1), \dots, v_1(x^m))$ is non-degenerate.
\end{assumption}

\begin{remark}
{\rm Assumption \ref{a3} is also equivalent to Var$(v_1(x^1)) > 0$ and, for
every $\ell = 2, \dots, m$, the
conditional variance of $v_1(x^\ell)$  given $ v_1(x^j),\, j \le \ell-1$ is positive.
}
\end{remark}

The main result of this paper is the following.

\begin{theorem}\label{main thm}
Let $m\geq 2$.
Suppose that Assumptions \ref{a1}, \ref{a2} and \ref{a3} hold.
If $mQ \le (m-1)d$, then $\{v(x), x \in T\}$ has no $m$-multiple points
almost surely.
\end{theorem}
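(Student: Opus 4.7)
The plan is to adopt the covering scheme due to Talagrand \cite{T98} and refined in \cite{DMX17}. By countable additivity it suffices to show that, for every compact interval $F\subset T$, the field $v$ has no $m$-multiple points in $F$. Fix such $F$ and $\eta>0$; I will bound the probability that there exist distinct $x^1,\dots,x^m\in F$ with $\min_{i\neq j}\Delta(x^i,x^j)\ge\eta$ and $v(x^1)=\cdots=v(x^m)$, and then let $\eta\downarrow 0$. Cover $F$ by $\Delta$-balls $S(s,r)$ of radius $r=2^{-n}$; there are of order $r^{-Q}$ of them. For each ordered $m$-tuple of such balls with $\Delta$-separated centers $s^1,\dots,s^m$, Assumption \ref{a3} together with compactness yields uniform lower bounds on the successive conditional variances of $v_1(s^1),\dots,v_1(s^m)$, and this combined with the $\Delta$-H\"older regularity of $v$ derivable from Assumption \ref{a1}(b) produces a Gaussian density estimate
\[
\P\bigl(\exists\, x^i\in S(s^i,r):\ v(x^1)=\cdots=v(x^m)\bigr)\;\le\; C\,r^{(m-1)d}.
\]
Summing over all such tuples yields a first-moment bound of order $r^{mQ-(m-1)d}$, which tends to $0$ in the strictly subcritical range $mQ<(m-1)d$ and closes that case by Borel--Cantelli.

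The genuine difficulty is the critical case $mQ=(m-1)d$, where the naive first-moment bound is merely $O(1)$. This is what Proposition \ref{prop} is designed to overcome. It asserts that, with high probability, for every choice of centers $(s^1,\dots,s^m)$ there exist (random) radii on which the oscillation of $v(x^i)-v(s^i)$ is substantially smaller than the generic H\"older scale would predict. Plugging this improved small-ball information into the density computation in place of the worst-case H\"older bound produces an additional multiplicative factor that is a negative power of $\log(1/r)$, and a union bound over dyadic choices of the random radii turns the $O(1)$ bound into a quantity summable in $n$. Assumption \ref{a2} is what powers this step: the reference point $x'$ it provides gives covariance estimates of the smoother order $|y_j-\bar y_j|^{\delta_j}$ with $\delta_j>\alpha_j$, and it is precisely the gap $\delta_j-\alpha_j$ that, after conditioning on the values $v_i(x')$ and applying standard Gaussian tail estimates, yields the refined oscillation bound of Proposition \ref{prop}.

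The main obstacle is not any single step but the uniform execution of the critical-case argument: one must union-bound simultaneously over dyadic scales of the random radii and over $m$-tuples of well-separated centers in $F$, all while keeping the accumulated logarithmic savings strictly positive. Bookkeeping those logarithmic factors so that they collectively dominate the critical balance $r^{mQ-(m-1)d}=1$ is the delicate part, and it is here that the strengthening of Assumption \ref{a2} compared to its analogue in \cite{DMX17} does its work, by upgrading the covariance bound from $\alpha_j$-smoothness to $\delta_j$-smoothness with $\delta_j>\alpha_j$.
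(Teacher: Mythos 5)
Your outline correctly identifies the subcritical mechanism and the fact that Proposition \ref{prop} must supply a logarithmic improvement over the H\"older scale, but the way you propose to close the critical case does not work, and it misses the structural device that makes the paper's argument go through. First, the improvement from Proposition \ref{prop} is only a factor $(\log\log(1/r))^{-1/Q}$, not a negative power of $\log(1/r)$, and it is available only at \emph{some random scale} $r\in[r_0^2,r_0]$, not uniformly in $r$. Consequently the critical case is not closed by making a first-moment sum ``summable in $n$'' and invoking Borel--Cantelli: the correct bookkeeping gives $\E\bigl(\sum_C \phi(r_{p,C})\bigr)\le K\rho^{mQ}$ with $\phi(r)=r^{mQ-(m-1)d}(\log\log(1/r))^m$, i.e.\ a bound that is merely $O(1)$ uniformly in $p$. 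The contradiction instead comes from Fatou's lemma combined with the divergence $\phi(r)\to\infty$ as $r\to 0$ when $mQ\le (m-1)d$: if multiple points existed, the random cover would be nonempty for all large $p$ and $\liminf_p\sum_C\phi(r_{p,C})=\infty$ on a set of positive probability, contradicting the uniform expectation bound.

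Second, and more seriously, you cannot ``plug the improved small-ball information into the density computation'' without justifying why the random covering radii and the event $\{v(x^1_C)\approx\cdots\approx v(x^m_C)\}$ can be treated as independent. The paper resolves this by decomposing $v=v^1+v^2$ with $v^2=\E(v\mid\Sigma_2)$, $\Sigma_2=\sigma(v(\hat t^1),\dots,v(\hat t^m))$, so that the covering (good/bad cubes and their radii $r_{p,C}$) is $\sigma(v^1)$-measurable while the density estimate $\P(\Omega_{p,C}\mid\Sigma_1)\le Kr_{p,C}^{(m-1)d}$ comes from the independent field $v^2$ (Lemma \ref{lemma5}). This is where Assumption \ref{a2} actually enters: it guarantees via Lemma \ref{lemma4} that $v^2$ is $\delta_j$-H\"older with $\delta_j>\alpha_j$, so the refined oscillation bound for $v$ from Proposition \ref{prop} transfers to $v^1$ with only a negligible loss. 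Your proposal instead attributes Assumption \ref{a2} to powering Proposition \ref{prop}, which in fact relies only on Assumption \ref{a1} (the frequency-band decomposition and independence of $v([a_\ell,a_{\ell+1}),\cdot)$ across bands). Without the $v^1/v^2$ splitting and the measurability bookkeeping, the multiplication of the two probabilities in your sketch is unjustified, and this is precisely the delicate point the proof must address.
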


\section{Preliminaries}

In this section, we provide some preliminaries that will be used for proving
Theorem \ref{main thm}. Clearly it suffices to prove that if $mQ \le (m-1)d$
then,  for every compact interval $F \subset T$, $\{v(x), x \in F\}$ has no $m$-multiple
points almost surely. Hence, without loss of generality, we assume in Sections
3 and 4 that $T$ is a compact interval.


Fix an integer $m \geq 2$. For an integer $n \ge 1$, let $A_n$ be the countable subset of 
$T^m$ defined by
\begin{equation}\label{Eq:An}
A_n = \{ (t^1, \dots, t^m) : t^i \in T \cap \mathbb{Q}^k, \Delta(t^i, t^j) \ge 1/n
\text{ for } i \ne j \}.
\end{equation}
Since $T$ is compact, we see that the closure $\overline{A}_n$ is compact. Moreover,  
$\bigcup_{n=1}^\infty A_n $ is dense in $T^m$.

For any constant  $\rho >0$,
let $B^i_\rho = B_\rho (t^i)$ ($ i = 1, \dots, m$). It is clear that if   $(t^1, \dots, t^m)  \in A_n$ and 
$\rho \in (0, 1/n)$ is small enough, then the intervals $B^i_\rho$  ($ i = 1, \dots, m$) are disjoint.

Consider the random set
\begin{align}\label{3.1}
M_{t^1, \dots, t^m; \rho} = \bigg\{ z \in \mathbb{R}^d : \exists \, (x^1, \dots, x^m)
\in \prod_{i=1}^m B^i_\rho \,
\text{ such that } z = v(x^1) = \dots = v(x^m) \bigg\},
\end{align}
which is the intersection of the images  $v(B^i_\rho)$ for $i= 1, \dots, m$ and is contained in 
the set of $m$-multiple points. On the other hand, 
let $z \in \R^d$ be an $m$-multiple point, i.e., there are $m$ distinct points $ x^1, \dots, x^m \in T$ 
such that $z = v(x^1) = \dots = v(x^m)$. Then for $n$ large enough we have $\Delta(x^i, x^j) > 3/n$ for 
$i \ne j$. It follows from the triangle inequality that, for any constant $\rho \in (0, 1/n)$, there is 
$(t^1, \dots, t^m) \in A_n$ such that $ \Delta(x^i, t^i) < \rho$. This implies that $z \in M_{t^1, \dots, t^m; \rho}$.
Thus we have verified that the set of $m$-multiple points of $\{ v(x) : x \in T \}$ 
can be written as a countable union
\begin{align}\label{3.2}
\bigcup_{n \ge 1} \, \bigcap_{\rho \in (0, 1/n)\cap \mathbb{Q}} \, \bigcup_{(t^1, \dots, t^m) \in A_n}\  
 M_{t^1, \dots, t^m; \rho}.
\end{align}
We will use this observation in Section 4 to complete the proof of  Theorem \ref{main thm}.

For the rest of this section, we fix an integer $n \ge 1$. Let $\rho_0 \in (0, 1/n)$ be the small constant given 
in Lemma \ref{lemma5} below. Notice that the constant $\rho_0$
is independent of $(t^1, \dots, t^m) \in A_n$. For simplicity of notation, we
assume that $B_{\rho_0} (t^i) \subseteq T$ for $ i = 1, \dots, m$ (otherwise we
take the intersection with $T$), and we omit the subscripts $t^1, \dots, t^m$ in \eqref{3.1} and write 
$M_{t^1, \dots, t^m; \rho},$ as $M_\rho$.

Recall from \cite{DMX17} that, under Assumption \ref{a1}, $\Delta$ provides an upper bound
for the $L^2$-norm of the increments of $\{ v(x), x \in T \}$ and in particular $v(x)$ is
continuous in $L^2(\Omega,  \mathscr{F}, \P)$.

\begin{lemma}\label{lemma1}\cite[Proposition 2.2]{DMX17}
Under Assumption \ref{a1}, for all $x, y \in T$ with $\Delta(x, y) \le \min\{a_0^{-1}, 1\}$, we have
$\|v(x) - v(y)\|_{L^2} \le 4c_0 \Delta(x, y)$.
\end{lemma}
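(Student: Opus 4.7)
The plan is to turn Assumption~\ref{a1}(b) into a direct modulus-of-continuity bound for $v$ by exploiting the degenerate case $a=b$, and then to recover the anisotropic exponents $\alpha_j$ via a coordinate-by-coordinate interpolation from $x$ to $y$.

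The first step is to specialize the first inequality of Assumption~\ref{a1}(b) to $a=b$. Since $[a,a)=\varnothing$, the white-noise integral $v([a,a),\cdot)$ vanishes, so the left-hand side collapses to $\|v(x)-v(y)\|_{L^2}$. This yields the self-contained inequality
\begin{equation}\label{plan:basic}
\|v(x) - v(y)\|_{L^2} \le c_0\Big(\sum_{j=1}^k a^{\gamma_j}|x_j - y_j| + a^{-1}\Big),
\end{equation}
valid for every $x,y \in F$ and every $a \ge a_0$.

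The second step is to interpolate along coordinates. Set $y^{(0)} = x$ and $y^{(j)} = (y_1,\dots,y_j,x_{j+1},\dots,x_k)$ for $1 \le j \le k$, so $y^{(k)} = y$ and each consecutive pair differs only in one coordinate; since $F$ is a product of intervals, every $y^{(j)}$ lies in $F$. The triangle inequality gives
\[
\|v(x) - v(y)\|_{L^2} \le \sum_{j=1}^k \|v(y^{(j-1)}) - v(y^{(j)})\|_{L^2},
\]
and to each summand I apply \eqref{plan:basic} with the optimising choice $a = a_j := |x_j - y_j|^{-\alpha_j}$ (skipping those $j$ with $x_j = y_j$). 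The hypothesis $\Delta(x,y) \le a_0^{-1}$ forces $|x_j - y_j|^{\alpha_j} \le \Delta(x,y) \le a_0^{-1}$, so $a_j \ge a_0$ and \eqref{plan:basic} applies. Because only the $j$th coordinate changes between $y^{(j-1)}$ and $y^{(j)}$, the sum in \eqref{plan:basic} collapses to one term, and the identity $\alpha_j(\gamma_j+1) = 1$ makes the two contributions equal: $a_j^{\gamma_j}|x_j-y_j| = a_j^{-1} = |x_j-y_j|^{\alpha_j}$. Each summand is therefore bounded by $2c_0|x_j - y_j|^{\alpha_j}$, and summing gives $\|v(x)-v(y)\|_{L^2} \le 2c_0\,\Delta(x,y) \le 4c_0\,\Delta(x,y)$.

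There is essentially no serious obstacle. The only observation with content is that $a=b$ is permitted in Assumption~\ref{a1}(b) and, in that degenerate case, removes the auxiliary noise piece to give a clean bound on $\|v(x)-v(y)\|_{L^2}$ itself; after that the argument is a routine scale-matching optimisation. The second inequality of Assumption~\ref{a1}(b), concerning the low-frequency part $v([0,a_0),\cdot)$, is not needed here; it becomes relevant elsewhere, when $\Delta(x,y)$ fails to be small. The extra hypothesis $\Delta(x,y) \le 1$ plays no role in the derivation above and is retained simply for convenience in later applications of the lemma.
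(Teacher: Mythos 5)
Your argument is correct, and it is essentially the standard proof of this fact; the paper itself gives no proof here but simply cites \cite[Proposition 2.2]{DMX17}, whose argument rests on exactly the same key observation you make, namely that taking $a=b$ in Assumption \ref{a1}(b) annihilates the term $v([a,b),\cdot)$ and yields $\|v(x)-v(y)\|_{L^2}\le c_0(\sum_j a^{\gamma_j}|x_j-y_j|+a^{-1})$ for all $a\ge a_0$. The only difference is that the cited proof takes a single global value $a=\Delta(x,y)^{-1}$ rather than interpolating coordinate by coordinate: since $|x_j-y_j|=|x_j-y_j|^{\alpha_j\gamma_j}|x_j-y_j|^{\alpha_j}\le \Delta(x,y)^{\gamma_j}|x_j-y_j|^{\alpha_j}$, one gets $a^{\gamma_j}|x_j-y_j|\le |x_j-y_j|^{\alpha_j}$ for each $j$ and hence the same bound $2c_0\,\Delta(x,y)$ in one step. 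This variant avoids your auxiliary points $y^{(0)},\dots,y^{(k)}$ and the attendant (true, but worth stating only because $F$ is a rectangle) claim that they lie in $F$; your per-coordinate optimisation $a_j=|x_j-y_j|^{-\alpha_j}$ is a legitimate alternative and the verification $a_j\ge a_0$ from $\Delta(x,y)\le a_0^{-1}$ is exactly right.
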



Assumption \ref{a1} suggests that for any $s \in T$ and $x$ that is close to $s$, the increment
$v(x) - v(s)$  can be approximated well by
$v([a,b),x) - v([a,b), s)$ if we choose $a$ and $b$ carefully. The following lemma from
\cite{DMX17} quantifies the approximation error on $S(s,\, cr)$.

\begin{lemma} \label{lem3.2}
Let $c > 0$ be a constant. Consider $b > a > 1$, $\varepsilon_1 > r > 0$, where $\varepsilon_1 > 0$ 
is a small constant. Set
$$
   A =  \sum_{j=1}^k a^{\alpha_j^{-1} -1}\, r^{\alpha_j^{-1}} + b^{-1}.
$$
There are constants $A_0$, $\tilde K$ and $\tilde c$ (depending on
$c_0$ in Assumption \ref{a1} and $c$) such that if $A \leq A_0 r^{}$ and
\begin{equation}\label{b1}
   u \geq \tilde K A \log^{1/2}\left(\frac{r^{}}{A} \right) ,
\end{equation}
then for any $s \in T$,
\begin{align*}
\P\bigg\{\sup_{x \in S(s,\, c r)} \vert v(x) - v(s) - (v([a,b),x) -
v([a,b),s))\vert \geq u \bigg\}
\leq \exp\left(-\frac{u^2}{\tilde c A^2} \right).
\end{align*}
\end{lemma}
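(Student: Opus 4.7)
The plan is to set $W(x) := v(x) - v(s) - [v([a,b), x) - v([a,b), s)]$, so that $W$ is a centered continuous $\mathbb{R}^d$-valued Gaussian field on $S(s, cr)$ with $W(s) = 0$, and the lemma becomes a uniform tail bound for $\sup_x |W(x)|$. I would attack this by the standard route: first bound the $L^2$-geometry of $W$ using Assumption \ref{a1}(b), then apply Dudley's entropy bound to control $\E \sup W$, and finally use Borell--TIS concentration to get the Gaussian tail.

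For the $L^2$-geometry, Assumption \ref{a1}(b) gives, for $x, y \in B_{cr}(s) \supset S(s, cr)$,
$$\|W(x) - W(y)\|_{L^2} \leq c_0 \sum_{j=1}^k a^{\gamma_j}|x_j - y_j| + c_0 b^{-1}.$$
Since $|x_j - s_j| \leq (cr)^{1/\alpha_j}$ and $\gamma_j + 1 = \alpha_j^{-1}$, the $L^2$-diameter will be bounded by $K_1 A$ with $K_1 = K_1(c_0, c, k)$. I would then introduce the anisotropic metric $\Delta'(x, y) := c_0 \sum_j a^{\gamma_j}|x_j - y_j|$ and cover $B_{cr}(s)$ by $\Delta'$-boxes of sidelength $\eta/(k c_0 a^{\gamma_j})$ in coordinate $j$, which yields
$$\log N(S(s, cr), \Delta', \eta) \leq K_2 \log\!\big(A/\eta + 2\big).$$

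Combining $\|W(x) - W(y)\|_{L^2} \leq \Delta'(x, y) + c_0 A$ with this entropy estimate, Dudley's inequality will give
$$\E \sup_{x \in S(s, cr)} W_i(x) \leq K_3 \int_0^{K_4 A} \log^{1/2}\!\big(A/\eta + 2\big) \, d\eta \leq K_5 A$$
for each coordinate $i$, and hence $\E \sup_x |W(x)| \leq K_6 A$. Borell--TIS then yields $\P\{\sup_x W_i(x) \geq \E \sup_x W_i + t\} \leq \exp(-t^2/(2 K_1^2 A^2))$. The hypothesis $A \leq A_0 r$ forces $\log^{1/2}(r/A) \geq \log^{1/2}(1/A_0)$, so choosing $A_0$ small and $\tilde K$ large will ensure $u \geq \tilde K A \log^{1/2}(r/A) \geq 2 K_6 A$ and therefore $t := u - \E \sup W_i \geq u/2$. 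Summing the resulting bound over the $2d$ half-space events for the components and absorbing constants into $\tilde c$ will yield the advertised tail.

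The main technical hurdle will be the anisotropic entropy estimate: I must carefully exploit the product structure of $B_{cr}(s)$ with its coordinate-dependent scales $a^{\gamma_j}$, and absorb the additive offset $c_0 b^{-1}$ into the ambient diameter (which is precisely why $A$ is defined to include the $b^{-1}$ term). The factor $\log^{1/2}(r/A)$ in the hypothesis on $u$ will play no role in bounding $\E \sup W$; its sole purpose, in tandem with $A \leq A_0 r$, is to make $u$ dominate $\E \sup W$ by a margin large enough for Borell--TIS to produce the clean tail $\exp(-u^2/(\tilde c A^2))$.
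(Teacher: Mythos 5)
Your overall strategy (bound the canonical metric of the remainder field $W(x):=v(x)-v(s)-(v([a,b),x)-v([a,b),s))$ via Assumption \ref{a1}(b), control the entropy, then apply a Gaussian concentration inequality) is the right one, and it is essentially the route behind the proof that the paper is quoting from \cite{DMX17} (which in turn rests on Talagrand's chaining lemma, Lemma 2.1 of \cite{T95}). However, there is a genuine gap in your entropy estimate, and it occurs exactly at the point where the factor $\log^{1/2}(r/A)$ --- which you dismiss as ``playing no role in bounding $\E\sup W$'' --- actually enters.

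The bound $\|W(x)-W(y)\|_{L^2}\le \Delta'(x,y)+c_0b^{-1}$ has an additive offset, so a $\Delta'$-cover at radius $\eta-c_0b^{-1}$ yields a $d_W$-cover at radius $\eta$ only when $\eta>c_0b^{-1}$. Consequently your estimate $\log N(S(s,cr),d_W,\eta)\le K_2\log(A/\eta+2)$ is only available for $\eta\gtrsim b^{-1}$; it cannot be obtained for smaller $\eta$ by ``absorbing the offset into the diameter,'' and in fact it is false there: the remainder $W$ contains the high-frequency part $v([b,\infty),\cdot)-v([b,\infty),s)$, whose canonical metric at small separations is comparable to that of $v$ itself, so for $\eta\ll b^{-1}$ the true covering numbers behave like $(Kr/\eta)^{Q}$ (via $d_W\le Kd_v\le K\Delta$ and Lemma \ref{lemma1}), not like $(KA/\eta)^{k}$. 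The Dudley integral over the range $(0,c_0b^{-1})$ must therefore be estimated with the $r$-scale entropy, contributing a term of order $b^{-1}\log^{1/2}(Kr/b^{-1})\le KA\log^{1/2}(Kr/A)$, and one can check on examples that this order is attained. So the correct conclusion is $\E\sup_{x\in S(s,cr)}|W(x)|\le KA\log^{1/2}(r/A)$, not $K_6A$; this is precisely why hypothesis \eqref{b1} carries the factor $\log^{1/2}(r/A)$, and it is not merely a safety margin. Once you replace your claim by this two-scale entropy computation (polynomial covering numbers at scale $r$ below level $b^{-1}$, at scale $A$ above it, diameter $\le K_1A$), the rest of your argument --- Borell--TIS with $\sigma\le K_1A$, the choice of $A_0$ and $\tilde K$ so that $u\ge 2\E\sup|W|$, and the union bound over components --- goes through and yields the stated tail.
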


\begin{remark}
{\rm The constant $c$ in Lemma \ref{lem3.2} and Proposition \ref{prop} below
is not important. It merely helps to simplify the presentation in Section \ref{Sec:4},
where sometimes we switch back and forth between a ball $S(s, r)$ and an interval
$B_r(x)$.}
\end{remark}

For describing the contribution of the main part $v([a,b],x) - v([a,b], s)$, we will
apply the small ball probability estimate given in Lemma \ref{lemma2} below. We refer
to Lemma 2.2 of \cite{T95} for a general lower bound on the small ball probability
of Gaussian processes. However, it was pointed out by Slobodan Krstic (personal
communication) that the condition of that lemma is not correctly stated. Indeed,
the lemma fails if we consider $S$ consisting of two points and independent standard 
normal random variables indexed by the two points. We will make use of the following 
reformulation of the presentation of Talagrand's lower bound given by Ledoux \cite[(7.11)-(7.13)
on p. 257]{Ledoux}.


\begin{lemma} \label{Lem:Ta93}
Let $\{ X(t), t \in S \}$ be a separable, $\R^d$-valued, centered Gaussian process
indexed by a bounded set $S$ with the canonical metric $d_X(s, t) = (\E|X(s) - X(t)|^2)^{1/2}$.
Let $N_\varepsilon(S)$ denote the  smallest number of $d_X$-balls of radius $\varepsilon$
needed to cover $S$. If there is a decreasing function $\psi : (0, \delta] \to (0, \infty)$ such that 
$N_\varepsilon(S) \le \psi(\varepsilon)$ for all $\varepsilon \in (0, \delta]$ and there are constants 
$c_2 \ge c_1 > 1$ such that
\begin{equation}\label{Eq:Covering}
c_1 \psi (\varepsilon) \le \psi (\varepsilon/2) \le c_2 \psi (\varepsilon)
 \end{equation}
for all $\varepsilon \in (0, \delta]$, then there is a constant $K$ depending only
on $c_1$, $c_2$ and $d$ such that for all $u \in (0, \delta)$,
\begin{equation}\label{Eq:SB1}
\P\bigg( \sup_{s, t \in S} |X(s) - X(t)| \le u \bigg) \ge \exp \big(-K \psi(u) \big).
\end{equation}
\end{lemma}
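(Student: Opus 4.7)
The plan is a chaining argument for small-ball probabilities, following the method of Talagrand as presented by Ledoux. First, I would build nested $\varepsilon_n$-nets $T_0 \subset T_1 \subset \cdots$ of $S$ at geometrically decreasing scales $\varepsilon_n = 2^{-n}\varepsilon_0$, with $\varepsilon_0 \ge \delta$ chosen so that $|T_0| = 1$ and $|T_n| \le \psi(\varepsilon_n)$ for $n \ge 1$. By separability I may take $S$ countable. With nearest-point projections $\pi_n : S \to T_n$ satisfying $d_X(t, \pi_n(t)) \le \varepsilon_n$, the chaining oscillation
\[
M_n := \sup_{t \in S} \bigl|X(\pi_n(t)) - X(\pi_{n-1}(t))\bigr|
\]
is the maximum over at most $|T_n| \le \psi(\varepsilon_n)$ centered Gaussian vectors in $\R^d$, each of $L^2$-norm at most $3\varepsilon_n$. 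Telescoping then gives $\sup_{s,t \in S} |X(s) - X(t)| \le 2 \sum_{n \ge 1} M_n$ almost surely, so that it suffices to control the joint event $\bigcap_{n \ge 1}\{M_n \le u_n\}$ for a sequence $(u_n)$ with $\sum_n u_n \le u/2$.

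Next, I would bound $\P(\bigcap_{n \ge 1}\{M_n \le u_n\})$ from below using Sidak's inequality. After dominating each $\R^d$-ball of radius $u_n$ by the inscribed $\ell^\infty$-box of half-side $u_n/\sqrt d$, the target event becomes an intersection of symmetric slabs in the ambient Gaussian space, and Sidak's product bound applies. Each one-dimensional slab factor is controlled by the elementary estimate $\P(|G| \le r) \ge c \min(r/\sigma, 1)$ for a centered Gaussian of standard deviation $\sigma \le 3\varepsilon_n$. Collecting across all triples $(n, t, \ell)$ yields
\[
-\log \P\biggl(\bigcap_{n \ge 1}\{M_n \le u_n\}\biggr) \le K \sum_{n \ge 1} \psi(\varepsilon_n)\bigl(1 + \log^+(\varepsilon_n/u_n)\bigr),
\]
with $K$ depending only on $d$.

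Finally, I would choose the radii $u_n$ to balance the two competing sums: $\sum u_n \le u/2$ on the one hand, and the right-hand side of the display above of order $\psi(u)$ on the other. The two-sided doubling hypothesis with $c_1 > 1$ is essential here: it forces $\psi(\varepsilon_n)$ to grow geometrically in $n$, so that the dominant contribution to both sums comes from the finest scale $\varepsilon_{n_0} \asymp u$, producing geometric series dominated by $\psi(u)$. A choice such as $u_n \asymp u \cdot \psi(\varepsilon_n)/\psi(u)$, truncated at the index $n_0$ with $\varepsilon_{n_0} \asymp u$, achieves this balance. The main obstacle is precisely this delicate balancing: a naive choice $u_n \asymp \varepsilon_n$ would give $\sum u_n$ of order $\varepsilon_0 \ge \delta \gg u$, whereas $u_n \ll \varepsilon_n$ introduces logarithmic factors $\log^+(\varepsilon_n/u_n)$ that, absent the doubling condition, would accumulate to an extra $\log(\delta/u)$ and destroy the claimed rate. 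The two-sided doubling with $c_1 > 1$ is the precise mechanism that absorbs this logarithm into $\psi(u)$ and delivers the bound $-\log\P \le K\psi(u)$.
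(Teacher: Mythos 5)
The paper does not prove this lemma at all: it is quoted as a reformulation of Talagrand's small-ball bound as presented in Ledoux \cite[(7.11)--(7.13)]{Ledoux}, so your attempt is being compared with that standard chaining-plus-\v{S}id\'ak argument, and your overall architecture (nested nets, telescoping of $\sup_{s,t}|X(s)-X(t)|$ into increments $M_n$, \v{S}id\'ak's inequality over one-dimensional slabs, and the doubling condition to sum the scales) is exactly the right one.

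There is, however, a genuine gap at the scales \emph{finer} than $u$. Your master estimate
$-\log \P(\bigcap_{n}\{M_n \le u_n\}) \le K \sum_{n} \psi(\varepsilon_n)(1 + \log^+(\varepsilon_n/u_n))$
follows from the one-dimensional bound $\P(|G|\le r)\ge c\min(r/\sigma,1)$, but that bound only yields $-\log\P(|G|\le r)\le \log(1/c)=O(1)$ per coordinate when $r\ge\sigma$. Since the chain must be carried to \emph{all} scales $n\to\infty$ (truncating at $n_0$ with $\varepsilon_{n_0}\asymp u$ leaves the remainder $\sup_t|X(t)-X(\pi_{n_0}(t))|$ completely uncontrolled in a lower-bound argument), the contribution of the fine scales to your display is at least $K\sum_{n>n_0}\psi(\varepsilon_n)\ge K\psi(u)\sum_{m\ge1}c_1^{m}=\infty$, because the same hypothesis $c_1>1$ that helps you on the coarse side forces $\psi(\varepsilon_n)$ to \emph{grow} geometrically as $n$ increases. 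To close the argument you must replace $c\min(r/\sigma,1)$ at these scales by the Gaussian concentration bound $1-\P(|G|\le r)\le 2e^{-r^2/(2\sigma^2)}$, i.e.\ $-\log\P(|G|\le r)\le Ce^{-r^2/(2\sigma^2)}$ for $r\ge\sigma$, and choose, say, $u_n=\varepsilon_n\sqrt{a(n-n_0)}$ for $n>n_0$ with $a$ large depending on $c_2$; then $\sum_{n>n_0}u_n\le C\sqrt{a}\,u$ while $\sum_{n>n_0}\psi(\varepsilon_n)e^{-u_n^2/(18\varepsilon_n^2)}\le\psi(u)\sum_{m\ge1}c_2^{m}e^{-am/18}<\infty$. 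This is where the upper doubling constant $c_2$ is actually used; in your write-up it plays no role, which is a symptom of the missing step. The coarse-scale balancing you describe is fine (up to the minor point that $\psi$ is only defined on $(0,\delta]$, so scales above $\delta$ need the trivial bound $N_\varepsilon(S)\le N_\delta(S)\le\psi(\delta)$).
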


Let $\rho \in (0, \rho_0/3)$ be a constant and let $(t^1, \dots, t^m) \in A_n$. Recall that $B^1_{2\rho}, \dots,
B^m_{2\rho}$ are the rectangles centered at $t^1, \ldots, t^m$.
By applying Assumption \ref{a1} and Lemma \ref{Lem:Ta93}, we derive the following lemma.
\begin{lemma}\label{lemma2}
Suppose that Assumption \ref{a1} holds and $\rho \in (0, \rho_0/3)$ is a constant.
Then there exist constants $K$ and $0 < \eta_0 < \rho_0/3$, depending on $c_0$ in
Assumption \ref{a1}, such that for all $(s^1, \dots, s^m) \in B^1_{2\rho} \times
\dots \times B^m_{2\rho}$, for all $0 < a < b$ and $0 < u < r < \eta_0$, we have
\begin{equation}\label{Eq:SB2}
\P\bigg(\sup_{1 \le i \le m} \, \sup_{x^i \in S(s^i, r)} | v([a, b), x^i) -
v([a, b), s^i) |  \le u \bigg) \ge \exp\left(-K \frac{r^{Q}}{u^{Q}} \right),
\end{equation}
where $Q  = \sum_{j = 1}^k \alpha_j^{-1}$.
\end{lemma}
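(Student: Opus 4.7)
The plan is to recognize the probability in \eqref{Eq:SB2} as a lower bound on the oscillation of a single centered Gaussian process and then invoke Lemma \ref{Lem:Ta93}. Set
\[
S = \{(i, x) : 1 \le i \le m,\ x \in S(s^i, r)\}
\]
and define the $\mathbb{R}^d$-valued centered Gaussian process $Y$ on $S$ by $Y(i, x) = v([a, b), x)$. Since $(i, s^i) \in S$ for every $i$, the event appearing in \eqref{Eq:SB2} contains
\[
\Bigl\{\, \sup_{(i,x),(j,y) \in S} |Y(i, x) - Y(j, y)| \le u \,\Bigr\},
\]
so it suffices to lower-bound the probability of this latter event.

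To apply Lemma \ref{Lem:Ta93} I would first control the canonical metric $d_Y((i,x),(j,y)) = \|v([a,b),x) - v([a,b),y)\|_{L^2}$. The independent scattering in Assumption \ref{a1}(a) gives
\[
\|v([a,b),x) - v([a,b),y)\|_{L^2} \le \|v(x) - v(y)\|_{L^2},
\]
and Lemma \ref{lemma1} bounds the right-hand side by $4c_0 \Delta(x,y)$ provided $\Delta(x,y) \le \min\{a_0^{-1},1\}$. Hence any $\Delta$-ball of radius $\varepsilon/(4c_0)$ sits inside a $d_Y$-ball of radius $\varepsilon$. A standard anisotropic volume estimate covers each $S(s^i, r)$ by at most $c(r/\varepsilon)^Q$ such $\Delta$-balls, so summing over $i$ yields $N_\varepsilon(S) \le \psi(\varepsilon) := C (r/\varepsilon)^Q$. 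Because $\psi(\varepsilon/2) = 2^Q \psi(\varepsilon)$ and $Q > k \ge 1$, the doubling hypothesis \eqref{Eq:Covering} holds with $c_1 = c_2 = 2^Q > 1$.

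Lemma \ref{Lem:Ta93} then delivers
\[
\P\Bigl(\sup_{(i,x),(j,y) \in S} |Y(i,x) - Y(j,y)| \le u\Bigr) \ge \exp\bigl(-K\psi(u)\bigr) = \exp\bigl(-K' (r/u)^Q\bigr),
\]
where $K'$ absorbs the constants and depends only on $c_0$, $m$, $d$ and the $\alpha_j$'s. Combined with the inclusion above, this is exactly \eqref{Eq:SB2}.

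The main obstacle is purely technical, namely choosing $\eta_0 \in (0, \rho_0/3)$ uniformly in $(s^1,\dots,s^m) \in B^1_{2\rho}\times\dots\times B^m_{2\rho}$ and in $(t^1,\dots,t^m) \in A_n$. Shrinking $\eta_0$ if necessary in terms of $c_0$, $a_0$ and $\rho_0$, one arranges that whenever $0 < u < r < \eta_0$ the following hold simultaneously: (i) $S(s^i, r) \subseteq B_{\rho_0}(t^i) \subseteq T$ for every admissible $s^i$; (ii) every pair $x, y$ used in the entropy estimate satisfies $\Delta(x, y) \le \min\{a_0^{-1}, 1\}$, so Lemma \ref{lemma1} is available; and (iii) $\psi$ is decreasing and doubling on the whole range $(0, r]$, which contains $u$. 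Once $\eta_0$ is fixed this way, the chain of estimates above yields the claimed lower bound.
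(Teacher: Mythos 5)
Your proposal is correct and follows essentially the same route as the paper: both reduce \eqref{Eq:SB2} to Lemma \ref{Lem:Ta93} applied to the process $v([a,b),\cdot)$ on $\bigcup_{i=1}^m S(s^i,r)$, bound the canonical metric by $4c_0\Delta$ via independent scattering and Lemma \ref{lemma1}, and verify the entropy bound $\psi(\varepsilon)=C(r/\varepsilon)^Q$ together with the doubling condition with $c_1=c_2=2^Q$. The only cosmetic difference is that you index the process by the disjoint union of the balls rather than by their union as a subset of $T$, which changes nothing substantive.
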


\begin{proof} As suggested by the proof of (3.3) in Talagrand \cite{T98}, (\ref{Eq:SB2})
can be derived from Lemma \ref{Lem:Ta93}. However, there was a typo in the exponent in (3.3)
in \cite{T98} (the ratio $\frac{r} {u^{1/\alpha}}$ there should be raised to the power $N$) and the suggested proof
by introducing the auxiliary process $Z$ does not give the correct power for $\frac{r}
{u^{1/\alpha}}$  in (3.3) in \cite{T98}, which is needed for proving Proposition 3.4 in
\cite{T98}. Hence we give a proof of \eqref{Eq:SB2}.

For $(s^1, \dots, s^m) \in B^1_{2\rho}
\times \dots \times B^m_{2\rho}$ and $ r < \rho_0/3$, define $S = \bigcup_{i=1}^m {S}(s^i, r)$.
Under our assumption, we have ${S}(s^i, r) \subseteq T$ for $i = 1, \dots, m$. Thus, $S \subseteq T$.
It follows from Assumption \ref{a1} that for all $x, \, y \in S$,
\begin{align*}
\|v([a, b), x) - v([a, b), y)\|_{L^2}^2 &= \|v(x) - v(y)\|_{L^2}^2 -
\|v(\mathbb{R}_+\setminus [a, b), x) - v(\mathbb{R}_+ \setminus [a, b), y)\|_{L^2}^2\\
& \le \|v(x) - v(y)\|_{L^2}^2.
\end{align*}
By Lemma \ref{lemma1}, we have that the canonical metric for $\{v([a, b), x), x \in S\}$
satisfies
$$
d_{v}(s, t) := \|v([a, b), x) - v([a, b), y) \|_{L^2} \le 4 c_0  \Delta(x, y)
$$
for all $x, \, y \in S$ with  $ \Delta(x, y)$ small. Hence there is a constant
$\eta_0 \in (0, \rho_0/3)$ such that for all $r \in (0, \eta_0)$ and $\varepsilon \le r$,
the minimal number of $d_v$-balls of radius $\varepsilon$ needed to cover $S$ is
$$N_\varepsilon(S) \le \psi(\varepsilon) := C_{N, Q} \Big(\frac r \varepsilon \Big)^{Q}.$$
Note that this function $\psi(\varepsilon)$ satisfies \eqref{Eq:Covering} with the constants
$c_1 = c_2 = 2^{Q}$ which are greater than 1. It follows from Lemma \ref{Lem:Ta93} that there
is a constant $K$ such that \eqref{Eq:SB2} holds. This proves Lemma \ref{lemma2}.
\end{proof}

The following is the main estimate, which is an extension of Proposition 3.4 in
Talagrand \cite{T98}.

\begin{proposition}\label{prop}
Let $c > 0$ be a constant and suppose that Assumption \ref{a1} holds.
Then there are constants $K_1$ and $0 < \eta_1 < 1$
such that for all $0 < r_0 < \eta_1$, $\rho \in (0, \rho_0/3)$, $(t^1, \dots, t^m) \in \overline{A}_n$
and $(s^1, \dots, s^m) \in B^1_{2\rho} \times \dots \times B^m_{2\rho}$,
we have
\[
\begin{split}
\P\Bigg( \exists \, r \in [r_0^2, r_0], \, \sup_{1 \le i \le m} \,
\sup_{x^i \in {S}(s^i,\, c r)} |v(x^i) - v(s^i)| \le
K_1 r \bigg(\log\log\frac{1}{r} \bigg)^{-1/Q} \Bigg)\\
\ge 1 - \exp\Bigg(- \bigg(\log\frac{1}{r_0}\bigg)^{1/2}\Bigg).
\end{split}
\]
\end{proposition}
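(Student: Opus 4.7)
The plan is to follow the strategy of \cite[Prop.~3.4]{T98}, adapted to the anisotropic metric $\Delta$. The idea is to consider a geometric sequence of scales $r_j = r_0 K^{-j}$, $j = 0, 1, \ldots, N$, with $K > 1$ close to $1$ and $N$ the largest integer with $r_N \ge r_0^2$ (so $N \asymp \log(1/r_0)$). At each scale, decompose
\[
v(x^i) - v(s^i) = \bigl(v([a_j, b_j), x^i) - v([a_j, b_j), s^i)\bigr) + R_j(x^i, s^i),
\]
where the intervals $[a_j, b_j) \subset \mathbb{R}_+$ are chosen pairwise disjoint, say $a_j = r_{j-1}^{-\lambda}$ and $b_j = r_j^{-\lambda}$ for some $\lambda \in (0, 1)$ close to $1$ (with $a_0$ replaced by the constant from Assumption \ref{a1}), and $R_j$ is the remainder. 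By Assumption \ref{a1}(a), disjointness makes the ``main'' terms at distinct scales independent, while the $R_j$ are controlled by Lemma \ref{lem3.2}. At any one scale Lemma \ref{lemma2} only gives a polylogarithmic small-ball lower bound, but independence across $N$ scales amplifies this to the stretched-exponential estimate claimed.

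Set $u_j = K_1 r_j (\log\log(1/r_j))^{-1/Q}$ and define
\begin{align*}
E_j &= \Big\{\sup_{1 \le i \le m}\sup_{x \in S(s^i, c r_j)} |v([a_j, b_j), x) - v([a_j, b_j), s^i)| \le u_j/2\Big\},\\
F_j &= \Big\{\sup_{1 \le i \le m}\sup_{x \in S(s^i, c r_j)} |R_j(x, s^i)| \le u_j/2\Big\}.
\end{align*}
On $E_j \cap F_j$ the event in the proposition holds with $r = r_j$. Applying Lemma \ref{lemma2} to the noise $v([a_j, b_j), \cdot)$ (whose canonical metric is dominated by $\Delta$) with $u = u_j/2$, and choosing $K_1$ large enough to absorb the constant of that lemma, yields $\P(E_j) \ge (2\log(1/r_0))^{-1/2}$ uniformly in $j$, $(s^i)$, and $(t^i) \in \overline{A}_n$. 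Since the $E_j$ are independent, $\P\bigl(\bigcap_j E_j^c\bigr) \le \exp\bigl(-(N+1)(2\log(1/r_0))^{-1/2}\bigr)$; picking $K$ close enough to $1$ so that $N+1 \ge 4\log(1/r_0)$ makes this at most $\exp(-2(\log(1/r_0))^{1/2})$.

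For the remainders, compute $A_j := \sum_\ell a_j^{\alpha_\ell^{-1} - 1} r_j^{\alpha_\ell^{-1}} + b_j^{-1}$. For $\lambda$ close to $1$ one has $A_j = O(r_j^{1+\varepsilon})$ for some $\varepsilon > 0$, so both the hypothesis $A_j \le A_0 r_j$ of Lemma \ref{lem3.2} and the condition $u_j/2 \ge \tilde K A_j \log^{1/2}(r_j/A_j)$ hold once $r_0 < \eta_1$ is small. That lemma then gives $\P(F_j^c) \le m \exp(-u_j^2/(4\tilde c A_j^2)) \le \exp(-(\log(1/r_0))^2)$, and summing over the $O(\log(1/r_0))$ scales still leaves a term negligible compared to $\exp(-(\log(1/r_0))^{1/2})$. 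Combining, $\P\bigl(\bigcup_j (E_j \cap F_j)\bigr) \ge 1 - \exp(-(\log(1/r_0))^{1/2})$ after absorbing constants into $\eta_1$.

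The main obstacle is the simultaneous tuning of $\lambda$, $K$, and $K_1$ so that all four requirements hold at once: disjointness of the $[a_j, b_j)$; $A_j \le A_0 r_j$ with enough slack that $\P(F_j^c)$ is overwhelmingly small; the number of scales $N$ growing like $\log(1/r_0)$; and the uniform lower bound $\P(E_j) \ge (\log(1/r_0))^{-1/2}$. The anisotropy enters only through the requirement $(\alpha_\ell^{-1} - 1)(1 - \lambda) > 0$ for every $\ell$, which is automatic for any $\lambda < 1$ since $\gamma_\ell > 0$ forces $\alpha_\ell^{-1} > 1$; the uniformity in $(t^i) \in \overline{A}_n$ and $(s^i) \in \prod_i B^i_{2\rho}$ follows because every constant above depends only on the compact interval $T$ and the constants in Assumption \ref{a1}.
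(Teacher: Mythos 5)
Your overall strategy is the same as the paper's (and Talagrand's): a multiscale decomposition into disjoint spectral bands $[a_j,b_j)$, independence of the main terms across bands, the small-ball estimate of Lemma \ref{lemma2} at each scale, and Lemma \ref{lem3.2} for the remainders. However, there is a genuine gap in your choice of bands. With $b_j=r_j^{-\lambda}$ and $\lambda\in(0,1)$, you get $b_j^{-1}=r_j^{\lambda}$, and since $\lambda<1$ this term satisfies $r_j^{\lambda}/r_j=r_j^{\lambda-1}\to\infty$. Hence $A_j\ge b_j^{-1}\gg r_j$: the hypothesis $A_j\le A_0 r_j$ of Lemma \ref{lem3.2} fails, and your claim that $A_j=O(r_j^{1+\varepsilon})$ is false (it handles the $a_j$-terms but not $b_j^{-1}$). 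To make the high-frequency tail negligible at scale $r_j$ you need $b_j\gg 1/r_j$, not $b_j\ll 1/r_j$; in fact, since the target radius is $u_j\asymp r_j(\log\log(1/r_j))^{-1/Q}$, condition \eqref{b1} forces $A_j\le r_j\,U^{-\beta}$ with $U=U(r_0)\to\infty$ (at least a power of $\log(1/r_0)$ so that the remainder failure probability beats $\exp(-(\log(1/r_0))^{1/2})$), i.e.\ the band must straddle $1/r_j$ as $[U^{-1}/r_j,\,U/r_j)$.

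This breaks your count of scales as well: disjointness of consecutive bands then forces $a_{j+1}\ge b_j\ge U/r_j$ together with $a_{j+1}\le U^{-1}/r_{j+1}$, hence $r_{j+1}\le r_j U^{-2}$, so the number of scales in $[r_0^2,r_0]$ is at most $\log(1/r_0)/(2\log U)=o(\log(1/r_0))$. Your requirement $N+1\ge 4\log(1/r_0)$ is therefore unachievable, and the "simultaneous tuning" you flag as the main obstacle cannot be resolved with your parameters. The repair (which is what the paper does) is to take $r_\ell=r_0U^{-2\ell}$, $a_\ell=U^{2\ell-1}/r_0$ with $U=(\log(1/r_0))^{1/(2\beta)}$, accept a per-scale small-ball probability of only $(\log(1/r_\ell))^{-1/4}$, and note that $N\asymp\log(1/r_0)/\log\log(1/r_0)$ scales still give $N\cdot(\log(1/r_0))^{-1/4}\gg(\log(1/r_0))^{1/2}$, which is all the final bound requires.
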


\begin{proof} The method of proof is similar to that of Proposition 3.4
in Talagrand \cite{T98}. 
For reader's convenience we provide a complete proof of Proposition \ref{prop} here.
The main ingredients are the small ball probability estimate in Lemma
\ref{lemma2} and the estimate of the approximation error in Lemma \ref{lem3.2}.

As in \cite{T95,T98} and \cite{DMX17}, let $U > 1$ be fixed for now and
its value will be chosen later. Set $r_\ell = r_0 U^{-2\ell}$ and
$a_\ell = U^{2\ell-1}/r_0$. Consider the largest integer $\ell_0$ such that
\begin{equation}\label{b6}
\ell_0 \leq \frac{\log(1/r_0)}{2 \log U}.
\end{equation}
Then for $\ell \leq \ell_0$, we have $r_\ell \geq r_0^2$.
It suffices to show that, for some large constant $K_1$,
\begin{align*}
&\P\left( \exists 1 \le \ell \le \ell_0, \, \sup_{1 \le i \le m}\,
\sup_{x^i \in S(s^i,\, c r_\ell)} |v(x^i) - v(s^i)|
\le K_1 \frac{r_\ell}{(\log\log\frac{1}{r_\ell})^{1/Q}} \right)\\
& \qquad \ge 1- \exp\left( -\left(\log\frac{1}{r_0}\right)^{1/2}\right).
\end{align*}

It follows from Lemma \ref{lemma2} that, for $K_1$ large enough so that
$K/K_1^Q \le 1/4$,
\begin{align}
\begin{aligned}\label{SmallBallLB}
&\P\left( \sup_{1 \le i \le m} \, \sup_{x^i \in S(s^i,\, c r_\ell)}
|v([a_\ell, a_{\ell+1}), x^i) - v([a_\ell, a_{\ell+1}), s^i)|
 \le K_1 \frac{r_\ell}{(\log\log\frac{1}{r_\ell})^{1/Q}} \right)\\
& \qquad \ge \exp\left( - \frac{K}{K_1^Q} \log\log\frac{1}{r_\ell} \right)\\
& \qquad \ge \left( \log\frac{1}{r_\ell} \right)^{-1/4}.
\end{aligned}
\end{align}
Thus, by the independence of the Gaussian processes $v([a_\ell, a_{\ell+1}), \cdot)$
($\ell = 1, \dots, \ell_0$), we have
\begin{align*}
& \P\left( \exists \ell \le \ell_0, \sup_{1 \le i \le m}\, \sup_{x^i \in S(s^i,\, c r_\ell)}
|v([a_\ell, a_{\ell+1}), x^i) - v([a_\ell, a_{\ell+1}), s^i)|
\le K_1 \frac{r_\ell}{(\log\log\frac{1}{r_\ell})^{1/Q}} \right)\\
& =1 - \prod_{\ell = 1}^{\ell_0} \left\{1 - \P\bigg( \sup_{1 \le i \le m} \,
\sup_{x^i \in S(s^i,\, c r_\ell)} |v([a_\ell, a_{\ell+1}), x^i) - v([a_\ell, a_{\ell+1}), s^i)|
\le K_1 \frac{r_\ell}{(\log\log\frac{1}{r_\ell})^{1/Q}} \bigg) \right\}.
\end{align*}
By \eqref{SmallBallLB}, we see that the last expression is greater than or equal to
\begin{align}
\begin{aligned}\label{LB1-exp}
1 - \prod_{\ell = 1}^{\ell_0} \left\{ 1 - \left( \log\frac{1}{r_\ell} \right)^{-1/4} \right\}
& \ge 1 - \left\{ 1 - \left( \log\frac{1}{r_0^2} \right)^{-1/4} \right\}^{\ell_0}\\
& \ge 1 - \exp\left( - \ell_0 \left( \log\frac{1}{r_0^2} \right)^{-1/4} \right).
\end{aligned}
\end{align}

Set
\[
A_\ell = \sum_{j=1}^k a_\ell^{\alpha_j^{-1} - 1} r_\ell^{\alpha_j^{-1}} + a_{\ell+1}^{-1}.
\]
Notice that $r_\ell a_\ell = U^{-1}$ and $r_\ell a_{\ell+1} = U$. Then
\begin{equation}\label{Eq:A2}
A_\ell r_\ell^{-1} = \sum_{j=1}^k (a_\ell r_\ell)^{\alpha_j^{-1}-1} + (a_{\ell+1} r_\ell)^{-1}
= \sum_{j=1}^k U^{-(\alpha_j^{-1}-1)} + U^{-1} \le (k+1) U^{-\beta},
\end{equation}
with $\beta = \min\{1, \min_{j=1, \dots, k} (\alpha_j^{-1}-1) \} > 0$
since $\alpha_j < 1$ for  $j=1, \dots, k$. Therefore, for $U$ large enough, $A_\ell
\le A_0r_\ell$, and for $u \ge \tilde{K} r_\ell U^{-\beta} \sqrt{\log U}$, \eqref{b1} is
satisfied. Hence, by Lemma \ref{lem3.2} and (\ref{Eq:A2}),
\begin{align*}
& \P\Bigg( \sup_{1 \le i \le m}\, \sup_{x^i \in S(s^i,\, c r_\ell)}
\big|v(x^i) - v(s^i) - v([a_\ell, a_{\ell+1}), x^i) + v([a_\ell, a_{\ell+1}), s^i)\big| \ge u \Bigg)\\
& \qquad \le \exp\bigg( - \frac{u^2}{\tilde{c} A_\ell^2} \bigg)\\
& \qquad \le \exp\bigg( -\frac{u^2}{\tilde{c}(k+1)^2 r_\ell^2} U^{2\beta} \bigg).
\end{align*}
Now we take $u = K_1 r_\ell (\log\log\frac{1}{r_0})^{-1/Q}$, which is allowed provided
\[
K_1 r_\ell \bigg( \log\log\frac{1}{r_0} \bigg)^{-1/Q} \ge \tilde{K} r_\ell U^{-\beta}\sqrt{\log U}.
\]
This is equivalent to
\begin{equation}\label{condU}
U^\beta(\log U)^{-1/2} \ge \frac{\tilde{K}}{K_1} \bigg( \log\log\frac{1}{r_0}\bigg)^{1/Q},
\end{equation}
which holds if  $U$ is large enough. It follows from the above that
\begin{align}
\begin{aligned}\label{UBexp}
&\P\Bigg( \sup_{1 \le i \le m} \sup_{x^i \in S(s^i, \, c r_\ell)}
\big|v(x^i) - v(s^i) - v([a_\ell, a_{\ell+1}), x^i)
+ v([a_\ell, a_{\ell+1}), s^i) \big| \ge  \frac{K_1 r_\ell}{( \log\log \frac{1}{r_0})^{1/Q}} \Bigg)\\
& \quad \le \exp \Bigg( - \frac{U^{2\beta}}{\tilde{c}(k+1)^2(\log\log\frac{1}{r_0})^{2/Q}} \Bigg).
\end{aligned}
\end{align}

Let
\begin{align*}
F_\ell & = \left\{ \sup_{1 \le i \le m} \sup_{x^i \in S(s^i,\, c r_\ell)} |v([a_\ell, a_{\ell+1}), x^i)
- v([a_\ell, a_{\ell+1}), s^i)| \le  \frac{K_1\,r_\ell}{2(\log\log\frac{1}{r_\ell})^{1/Q}}\right\},\\
G_\ell & = \left\{ \sup_{1 \le i \le m} \sup_{x^i \in S(s^i,\, c r_\ell)} |v(x^i) - v(s^i)
- v([a_\ell, a_{\ell+1}), x^i) + v([a_\ell, a_{\ell+1}), s^i)| \ge  \frac{K_1\, r_\ell}{2(\log\log\frac{1}{r_\ell})^{1/Q}}\right\}.
\end{align*}
Then
\begin{align}\label{LBP-P}
\begin{aligned}
& \P\left( \exists 1 \le \ell \le \ell_0, \sup_{1\le i \le m} \sup_{x^i \in S(s^i,\, c r_\ell)}
|v(x^i) - v(s^i)| \le \frac{K_1\, r_\ell}{(\log\log\frac{1}{r_\ell})^{1/Q}} \right)\\
& \qquad \ge \P\Bigg( \bigcup_{\ell=1}^{\ell_0} (F_\ell \cap G_\ell^c) \Bigg)\\
& \qquad \ge \P\left( \bigg( \bigcup_{\ell=1}^{\ell_0} F_\ell \bigg) \cap \bigg( \bigcup_{\ell=1}^{\ell_0} G_\ell\bigg)^c \right)\\
& \qquad \ge \P\left( \bigcup_{\ell=1}^{\ell_0}F_\ell\right) -
\P\left( \bigcup_{\ell = 1}^{\ell_0} G_\ell \right).
\end{aligned}
\end{align}
By \eqref{LB1-exp}, we have
\[
\P\left(\bigcup_{\ell=1}^{\ell_0}F_\ell \right) \ge 1 -
\exp\left( - \ell_0 \left( \log\frac{1}{r_0^2}\right)^{-1/4} \right),
\]
and by \eqref{UBexp},
\[
\P\left(\bigcup_{\ell=1}^{\ell_0} G_\ell \right) \le \ell_0 \exp\left( - \frac{U^{2\beta}} 
{\tilde{c}(k+1)^2(\log\log\frac{1}{r_0})^{2/Q}} \right).
\]
Combining this with \eqref{LBP-P}, we get
\begin{align*}
&\P\left( \exists 1 \le \ell \le \ell_0, \sup_{1 \le i \le m} \, \sup_{x^i \in S(s^i,\, c r_\ell)}
|v(x^i) - v(s^i)| \le  \frac{K_1\, r_\ell}{(\log\log\frac{1}{r_\ell})^{1/Q}}\right)\\
& \quad \ge1 - \exp\left( - \ell_0 \left( \log\frac{1}{r_0^2} \right)^{-1/4} \right) - \ell_0
\exp\left( - \frac{U^{2\beta}}{\tilde{c}(k+1)^2(\log\log\frac{1}{r_0})^{2/Q}} \right).
\end{align*}
Therefore, the proof will be completed provided
\begin{equation}\label{expUBcond}
\exp\left( - \ell_0 \left(\log\frac{1}{r_0^2}\right)^{-1/4} \right) + \ell_0 \exp\left( - \frac{U^{2\beta}}
{\tilde{c}(k+1)^2(\log\log\frac{1}{r_0})^{2/Q}} \right)
\le \exp\left( - \left(\log\frac{1}{r_0}\right)^{1/2} \right).
\end{equation}

Recall the condition \eqref{condU}, and the definition of $\ell_0$ in \eqref{b6}.
If we set
\[
U = \left( \log\frac{1}{r_0}\right)^{1/(2\beta)},
\]
then for $r_0$ small enough, by \eqref{b6},
\[
\ell_0 > \frac{\beta}{2} \left(\log\frac{1}{r_0}\right) \left( \log\log\frac{1}{r_0}\right)^{-1} > 1.
\]
Therefore, the left-hand side of \eqref{expUBcond} is bounded above by
\begin{align*}
& \exp\left( - \frac{(\log\frac{1}{r_0})^{3/4}}{\tilde{c}(k+1)^2\log\log\frac{1}{r_0}} \right)
+ \left( 1 + \log\frac{1}{r_0} \right)
\exp\left( - \frac{\log\frac{1}{r_0}}{\tilde{c}(k+1)^2(\log\log\frac{1}{r_0})^{2/Q}} \right)\\
& \qquad \le \exp\left( -\left(\log\frac{1}{r_0}\right)^{1/2} \right)
\end{align*}
provided $r_0$ is small enough. This completes the proof of Proposition \ref{prop}.
\end{proof}

Let $n \ge 1$ be fixed. Notice that if $(t^1, \dots, t^m) \in A_n$ and 
$0 < \rho < \frac{1}{2c'n}$, where $c'>0$ is the constant in Assumption \ref{a2}, then 
$\Delta(t^i, t^h) \ge 2 c'\rho$ for $i\ne h$. It follows from Assumption \ref{a2} that for 
each $(t^1, \dots, t^m) \in A_n$ and $\rho \in (0,\, \varepsilon_0 \wedge \frac{1}{2c'n})$, 
there are $(\hat{t}^1, \dots, \hat{t}^m) \in  B^1_{c\rho} 
\times \dots \times B^m_{c\rho}$ (recall that $B^i_{\rho}=B_{\rho}(t^i)$)
such that for all $h = 1, \dots, m$ and all $x, \,y \in B^i_{2\rho}$ ($i = 1, \dots, m$), 
we have
\begin{equation}\label{3.3}
\big|\E \big((v(x) - v(y)) \cdot v(\hat{t}^h)\big) \big| \le
C \sum_{j=1}^k |x_j - y_j|^{\delta_j}.
\end{equation}
The points $\hat{t}^1, \dots, \hat{t}^m$ are determined by $t^1, \dots, t^m$, and $(\hat{t}^1, 
\dots, \hat{t}^m) \in \overline{A}_{2n}$ provided $0 < \rho < \varepsilon_0 \wedge \frac 1 {2c'n}$,
where $a \wedge b = \min\{a, b\}$.

Let $\Sigma_2$ denote the $\sigma$-algebra generated by $v(\hat{t}^1),
\dots, v(\hat{t}^m)$. Define
\begin{equation}\label{Def:v2}
v^2(x) = \E \big(v(x)| \Sigma_2\big), \quad v^1(x) = v(x) - v^2(x).
\end{equation}
The Gaussian random fields $v^1= \{v^1(x), x \in T\}$ and $v^2 =
\{v^2(x), x \in T\}$ are independent.

\begin{lemma}\label{lemma4}
Suppose Assumptions \ref{a1}, \ref{a2} and \ref{a3} are satisfied.
For any $0< \rho < \varepsilon_0\wedge \frac 1 {2c'n}$, there is a constant $K_2$ 
depending on $n$ and the constants $C$
in Assumption \ref{a2} (but not on $\hat{t}^1, \dots, \hat{t}^m$) such that for all 
$i = 1, \dots, m$ and all $x, y \in B^i_{2\rho}$,
\[
\big|v^2(x) - v^2(y) \big| \le K_2 \sum_{j = 1}^k |x_j - y_j|^{\delta_j}
\max_{1 \le \ell \le m} \big|v(\hat{t}^\ell) \big|.
\]
\end{lemma}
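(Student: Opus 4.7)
\medskip

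\textbf{Plan of proof.} The strategy is to exploit that the projection $v^2(x) = \E(v(x)\mid\Sigma_2)$ is an explicit linear combination of $v(\hat t^1),\dots,v(\hat t^m)$, so its increments are controlled by the increments of the coefficient vector, which in turn are governed by \eqref{3.3}.

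First, I would work componentwise. Since the components $v_1,\dots,v_d$ are i.i.d., the same linear projection formula applies to each, with coefficients that do not depend on the component index. Writing $\Gamma$ for the $m\times m$ Gram matrix with entries $\Gamma_{\ell h}=\E(v_1(\hat t^\ell)v_1(\hat t^h))$ and $b(x)\in\R^m$ for the vector with entries $b_\ell(x)=\E(v_1(x)v_1(\hat t^\ell))$, I would set $a(x)=\Gamma^{-1}b(x)$, so that for each $i=1,\dots,d$,
\[
v_i^2(x)=\sum_{\ell=1}^m a_\ell(x)\,v_i(\hat t^\ell),\qquad
v_i^2(x)-v_i^2(y)=\sum_{\ell=1}^m (a_\ell(x)-a_\ell(y))\,v_i(\hat t^\ell).
\]
Then $a(x)-a(y)=\Gamma^{-1}(b(x)-b(y))$, and since $x,y\in B^i_{2\rho}$, the bound \eqref{3.3} (applied componentwise, i.e.\ via \eqref{a2:ineq} in case (i) when $h=i$ and in case (ii) when $h\neq i$, noting $\Delta(t^i,t^h)\ge 2c'\rho$) gives
\[
|b_\ell(x)-b_\ell(y)|\le C\sum_{j=1}^k|x_j-y_j|^{\delta_j}
\]
for every $\ell=1,\dots,m$.

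Next I would need a uniform bound $\|\Gamma^{-1}\|_{\mathrm{op}}\le K(n)$ that is independent of the specific choice $(\hat t^1,\dots,\hat t^m)$. This is the main (and really the only) obstacle: $\Gamma$ depends on the points, so I must rule out near-degeneracy. For this, I would use that $(\hat t^1,\dots,\hat t^m)\in\overline{A}_{2n}$, which is compact, and that $(\hat s^1,\dots,\hat s^m)\mapsto \Gamma(\hat s^1,\dots,\hat s^m)$ is continuous because $v$ is $L^2$-continuous (Lemma \ref{lemma1}). Assumption \ref{a3} guarantees $\Gamma$ is strictly positive definite at every point of $\overline A_{2n}$, so its smallest eigenvalue is a strictly positive continuous function on this compact set and hence is bounded below by some $\lambda_n>0$. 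This yields $\|\Gamma^{-1}\|_{\mathrm{op}}\le\lambda_n^{-1}$ uniformly.

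Combining these ingredients, $|a_\ell(x)-a_\ell(y)|\le \lambda_n^{-1}\|b(x)-b(y)\|_2\le \lambda_n^{-1}C\sqrt{m}\sum_j|x_j-y_j|^{\delta_j}$. Plugging back, for each component
\[
|v_i^2(x)-v_i^2(y)|\le m\lambda_n^{-1}C\sqrt{m}\sum_{j=1}^k|x_j-y_j|^{\delta_j}\max_{1\le\ell\le m}|v_i(\hat t^\ell)|,
\]
and summing in quadrature over $i=1,\dots,d$ (using $\sum_i\max_\ell|v_i(\hat t^\ell)|^2\le \sum_\ell|v(\hat t^\ell)|^2\le m\max_\ell|v(\hat t^\ell)|^2$) absorbs the extra factor into a constant $K_2$ that depends only on $n$, $m$, $d$ and $C$, giving the claimed bound.
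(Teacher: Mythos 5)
Your proposal is correct and follows essentially the same route as the paper: both express $v^2$ as the explicit linear projection onto the span of $v(\hat t^1),\dots,v(\hat t^m)$, bound the increments of the covariance coefficients via \eqref{3.3}, and obtain uniformity of the constant from Assumption \ref{a3} together with continuity and compactness of $\overline A_{2n}$. The only cosmetic difference is that you invert the Gram matrix directly where the paper uses Gram--Schmidt orthonormalization; these are equivalent formulations of the same projection.
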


\begin{proof}
By Assumption \ref{a3}, the subspace in $L^2(\Omega; \mathbb{R}^d)$ of random vectors 
$\Omega \to \mathbb R^d$ spanned by $v(\hat{t}^1), \dots, v(\hat{t}^m)$, has dimension $m\geq 2$.
Let $\big\{ \sum_{h = 1}^m a_{h,j}v(\hat{t}^h) : j = 1, \dots, m \big\}$ be an orthonormal basis of
this subspace obtained by the Gram--Schmidt orthogonalization procedure, so that the coefficients 
$a_{i,j}$ are continuous functions of 
$(\hat{t}^1, \dots, \hat{t}^m) \in \overline{A}_{2n}$.
Then
\begin{align*}
v^2(x) &= \sum_{j=1}^m \E\bigg[\sum_{h = 1}^m a_{h,j}v(\hat{t}^h) \cdot v(x)\bigg]
\bigg(\sum_{\ell = 1}^m a_{\ell, j}v(\hat{t}^\ell)\bigg).
\end{align*}
By \eqref{3.3}, for all $i = 1, \dots, m$ and all $x, y \in B^i_{2\rho}$,
\begin{align*}
\big|v^2(x) - v^2(y) \big| &= \Bigg|\sum_{\ell = 1}^m \bigg( \sum_{h=1}^m \sum_{j=1}^m
a_{h, j}a_{\ell, j}  \E\left[ (v(x) - v(y)) \cdot v(\hat{t}^h) \right] \bigg) v(\hat{t}^\ell)\Bigg|\\
& \le K \sum_{j=1}^k |x_j - y_j|^{\delta_j} \max_{1 \le \ell \le m} \big| v(\hat{t}^\ell) \big|.
\end{align*}
By the continuity of  $a_{i,j}$ in $(\hat{t}^1, \ldots, \hat{t}^m)$ and the compactness of 
$\overline{A}_{2n}$, we see that  the constant $K$  is independent of $\hat{t}^1, \dots, \hat{t}^m$. 
This completes the proof.
\end{proof}

\begin{lemma}\label{lemma5}
Suppose 
Assumptions \ref{a1}, \ref{a2} and \ref{a3} are satisfied.
Let $n \ge 1$.
Then there exist constants $K$ and $\rho_0 \in (0, 1/n)$ which may depend on $n$ such
that for all $\rho \in (0, \rho_0)$,  $a_2, \dots, a_m \in \mathbb{R}^d$,
$r > 0$,  $(t^1, \dots, t^m ) \in \overline{A}_n$ and all $(x^1, \dots, x^m) 
\in B_\rho(t^1) \times \dots \times B_\rho(t^m)$,
\[
\P\left( \sup_{2 \le i \le m} |v^2(x^1) - v^2(x^i) - a_i| \le r \right) \le K r^{(m-1)d}.
\]
\end{lemma}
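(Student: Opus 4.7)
Since $v^2(x) = \E(v(x)\mid\Sigma_2)$ and $\Sigma_2$ is generated by the $\R^d$-valued Gaussian vectors $v(\hat t^1),\dots,v(\hat t^m)$, Gaussian conditioning combined with the i.i.d.\ structure of the components of $v$ yields $v^2(x) = \sum_{h=1}^m \beta_h(x)\,v(\hat t^h)$, where $\beta_h(x)$ are the deterministic, component-independent coefficients of the linear regression of $v_1(x)$ on $(v_1(\hat t^1),\dots,v_1(\hat t^m))$. Consequently
\[
W := \bigl(v^2(x^1) - v^2(x^i)\bigr)_{i=2,\dots,m} \in \R^{(m-1)d}
\]
is centered Gaussian with covariance $M\otimes I_d$, where
\[
M_{ij} = \mathrm{Cov}\bigl(v_1^2(x^1) - v_1^2(x^i),\, v_1^2(x^1) - v_1^2(x^j)\bigr),\qquad i,j=2,\dots,m,
\]
so that $\det\mathrm{Cov}(W) = (\det M)^d$. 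Applying the standard Gaussian density upper bound on the event $\{\sup_i|W_i-a_i|\le r\}$, a product of $d$-dimensional Euclidean balls of radius $r$, gives
\[
\P\Bigl(\sup_{2\le i\le m}|v^2(x^1)-v^2(x^i)-a_i|\le r\Bigr) \;\le\; \frac{C_{m,d}}{(\det M)^{d/2}}\,r^{(m-1)d}.
\]
The lemma is thus reduced to a uniform positive lower bound on $\det M$ over the admissible parameter set.

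To obtain this bound I use continuity and compactness. Consider the compact set
\[
\mathcal K_\rho = \bigl\{((t^i),(x^i),(\hat t^i)) : (t^i)\in\overline{A}_n,\ x^i\in B_\rho(t^i),\ \hat t^i\in B_{c\rho}(t^i)\bigr\}.
\]
On the diagonal slice $x^i=\hat t^i=t^i$, the identity $v^2(\hat t^i)=v(\hat t^i)$ forces
\[
M^0_{ij}=\mathrm{Cov}\bigl(v_1(t^1)-v_1(t^i),\,v_1(t^1)-v_1(t^j)\bigr).
\]
Assumption \ref{a3} implies that $v_1(t^1),\dots,v_1(t^m)$ are linearly independent in $L^2$, hence so are their $m-1$ differences $v_1(t^1)-v_1(t^i)$, and $\det M^0>0$. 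The $L^2$-continuity of $v$ (Lemma \ref{lemma1}) makes $M^0$ continuous in $(t^i)$, so compactness of $\overline A_n$ produces $c(n)>0$ with $\det M^0\ge 2c(n)$ everywhere on $\overline A_n$. For $\rho$ small enough (in terms of $n$), the points $\hat t^1,\dots,\hat t^m$ remain $1/(2n)$-separated in $\Delta$, so the Gram matrix $\Gamma=(\E[v_1(\hat t^h)v_1(\hat t^{h'})])$ stays uniformly invertible on $\mathcal K_\rho$; consequently $\beta_h(x)=(\Gamma^{-1}c(x))_h$ and each $M_{ij}$ depend jointly continuously on $((t^i),(x^i),(\hat t^i))$. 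Choosing $\rho_0=\rho_0(n)\in(0,1/n)$ small enough that $|\det M-\det M^0|<c(n)$ uniformly on $\mathcal K_{\rho_0}$ yields $\det M\ge c(n)$ throughout, and the density bound above then delivers the conclusion with $K=C_{m,d}\,c(n)^{-d/2}$.

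The main obstacle is precisely this second step: pointwise positivity of $\det M^0$ is immediate from Assumption \ref{a3}, but a \emph{uniform} positive lower bound on $\det M$ requires both joint continuity of the map $((t^i),(x^i),(\hat t^i))\mapsto M$ and a compact parameter domain. The continuity relies on uniform invertibility of $\Gamma$, which is available only because the hypothesis $(t^1,\dots,t^m)\in\overline A_n$ forces the $\hat t^h$'s to remain $\Delta$-separated once $\rho_0$ is chosen sufficiently small in terms of $n$, while the uniform bound then comes from compactness of $\overline A_n$.
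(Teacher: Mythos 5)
Your proposal is correct, and its overall strategy coincides with the paper's: reduce the small-ball estimate to a uniform positive lower bound on the determinant of the covariance matrix of the differences $v^2(x^1)-v^2(x^i)$, obtained from Assumption \ref{a3}, compactness of $\overline{A}_n$, and $L^2$-continuity, then apply a standard Gaussian bound; the case $d>1$ follows from the i.i.d.\ components. The two technical steps are, however, executed differently. For the shift $a_i$ you bound the probability by the sup of the density times the Lebesgue measure of the product of balls, which handles non-centered targets automatically; the paper instead first removes the shift by Anderson's theorem and then applies the centered small-ball bound. Your route is if anything more economical. For the uniform nondegeneracy you perturb $\det M$ away from the diagonal slice $x^i=\hat t^i=t^i$, which obliges you to prove joint continuity of the regression coefficients $\beta_h(x)=(\Gamma^{-1}c(x))_h$, hence uniform invertibility of the Gram matrix $\Gamma$ of $(v_1(\hat t^1),\dots,v_1(\hat t^m))$. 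Be aware that $\Delta$-separation of the $\hat t^h$ alone does not yield this: you need that $(\hat t^1,\dots,\hat t^m)\in\overline{A}_{2n}$ together with Assumption \ref{a3}, continuity of the covariance, and compactness --- i.e.\ the same argument you already use for $\det M^0$ --- so this step is under-justified as written though easily repaired. The paper bypasses the Gram matrix entirely: it bounds $\big\|\sum_i b_i v^2(x^i)\big\|_{L^2}$ from below by the triangle inequality, using the $L^2$-contraction property of conditional expectation and $v^2(\hat t^i)=v(\hat t^i)$ to control $\|v(t^i)-v^2(x^i)\|_{L^2}$ by $\|v(t^i)-v(\hat t^i)\|_{L^2}+\|v(\hat t^i)-v(x^i)\|_{L^2}$, which is small for small $\rho$ by Lemma \ref{lemma1}. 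That gives a quantitative lower bound on the smallest eigenvalue of the covariance (hence on the determinant) without any continuity of $\Gamma^{-1}$, which is the main structural difference between the two arguments.
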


\begin{proof}
We first assume $d = 1$.
We claim that if $\rho_0$ is small then $v^2(x^1), \dots, v^2(x^m)$ are linearly independent for all $\rho \in
(0, \rho_0)$, $(t^1, \dots, t^m) \in \overline{A}_n$ and $(x^1, \dots, x^m) \in B_\rho(t^1) \times \dots \times 
B_\rho(t^m)$. Indeed, by Assumption \ref{a3} and the compactness of  $\overline{A}_n$, we
can find $C > 0$ depending on $n$ such that $\mathrm{Var}(\sum_{i = 1}^m b_i v({t}^i)) \ge C$ for all 
$(t^1, \dots, t^m ) \in \overline{A}_n$ and $b \in \mathbb{R}^m$ with $|b| = 1$.
Then by the Cauchy--Schwarz inequality, we have
\begin{align*}
&\Bigg[\E\bigg( \sum_{i=1}^m b_i (v(t^i) - v^2(x^i)) \bigg)^2\Bigg]^{1/2}
 \le |b| \Bigg[\E\left(\sum_{i=1}^m \left(v(t^i) - v^2(x^i)\right)^2 \right)\Bigg]^{1/2}\\
&\qquad \le |b| \sum_{i=1}^m \left( \left[ \E\left(v(t^i) - v(\hat{t}^i) \right)^2\right]^{1/2} + \left[ \E\left( \E(v(\hat{t}^i) - v(x^i)|
\Sigma_2) \right)^2 \right]^{1/2} \right)\\
& \qquad \le |b| \sum_{i=1}^m\Big( \|v(t^i) - v(\hat{t}^i) \|_{L^2} + \|v(\hat{t}^i) - v(x^i) \|_{L^2} \Big).
\end{align*}
It follows that
\begin{align*}
\Bigg[\E \bigg( \sum_{i=1}^m b_i v^2(x^i) \bigg)\Bigg]^{1/2}
&\ge \Bigg[ \E\bigg( \sum_{i=1}^m b_i v(t^i) \bigg)^2 \Bigg]^{1/2}
- \Bigg[ \E \bigg( \sum_{i=1}^m b_i (v(t^i) - v^2(x^i)) \bigg)^2 \Bigg]^{1/2}\\
& \ge \Bigg( C^{1/2} - \sum_{i=1}^m\Big( \|v(t^i) - v(\hat{t}^i) \|_{L^2} +
\|v(\hat{t}^i) - v(x^i) \|_{L^2} \Big) \Bigg) |b|.
\end{align*}
Notice that, Assumption \ref{a1} implies  the $L^2(\P)$-continuity of $v(x)$
[cf. Lemma \ref{lemma1}], we can find a
small constant $\rho_0 \in (0, 1/n)$ depending on $C$ so that the above is $\ge C'|b|$ 
for all $\rho \in (0, \rho_0)$, $(t^1, \dots, t^m) \in \overline{A}_n$ and $(x^1, \dots, x^m) 
\in B_\rho(t^1) \times \dots \times B_\rho(t^m)$, where $C' > 0$. It follows that $v^2(x^1), 
\dots, v^2(x^m)$ are linearly independent, and so are $v^2(x^1) - v^2(x^2),
v^2(x^1) - v^2(x^3), \dots, v^2(x^1) - v^2(x^m)$.

Denote the determinant of the covariance matrix of the last random vector by
$$
\det \mathrm{Cov} (v^2(x^1) - v^2(x^2), v^2(x^1) - v^2(x^3), \dots, v^2(x^1) - v^2(x^m)).
$$
If $\rho \in (0, \rho_0)$, $(t^1, \dots, t^m) \in \overline{A}_n$ and $(x^1, \dots, x^m)
 \in B_\rho(t^1) \times \dots \times B_\rho(t^m)$, then $(x^1, \dots, x^m) \in \overline{A}_{2n}$ 
 provided that $\rho_0$ is small. Since the function $(x^1, \dots, x^m) \mapsto \det \mathrm{Cov}
(v^2(x^1) - v^2(x^2), v^2(x^1) - v^2(x^3), \dots, v^2(x^1) - v^2(x^m))$ is
continuous and positive on the compact set $\overline{A}_{2n}$,
it is bounded from below by a positive constant depending on $n$.
This and Anderson's theorem \cite{Anderson} imply that
\[
\P\left( \sup_{2 \le i \le m} |v^2(x^1) - v^2(x^i) - a_i| \le r \right) \le
\P\left( \sup_{2 \le i \le m} |v^2(x^1) - v^2(x^i)| \le r \right)
\le K r^{m-1}.
\]
Since $v(x)$ has i.i.d.\ components, the case $d > 1$ follows readily.
\end{proof}

We end this section with the following lemma which is obtained by applying
Theorem 2.1 and Remark 2.2 of \cite{KRS12} to the metric space $(T, \Delta)$.
It provides nested families of ``cubes" sharing most of the good properties of
dyadic cubes in the Euclidean spaces. For this reason, we call the sets in
$\mathscr{Q}_q$ generalized dyadic cubes of order $q$. Their nesting property
will help us to construct an economic covering for $M_\rho$.

\begin{lemma}\label{lemma6}
There exist constants $c_1, c_2$, and a family $\mathscr{Q}$ of Borel
subsets of $T$, where $\mathscr{Q}
= \bigcup_{q=1}^\infty \mathscr{Q}_q$, $\mathscr{Q}_q = \{ I_{q, \ell} :
\ell =1, \dots, n_q \}$, such that the following hold.
\begin{enumerate}
\item[$(i)$] $T = \bigcup_{\ell=1}^{n_q} I_{q, \ell}$ for each $q \ge 1$.
\item[$(ii)$] Either $I_{q, \ell} \cap I_{q', \ell'} = \varnothing$ or $I_{q, \ell}
\subset I_{q', \ell'}$ whenever $q \ge q'$, $1 \le \ell \le n_q$, $1 \le \ell' \le n_{q'}$.
\item[$(iii)$] For each $q, \ell$, there exists $x_{q, \ell} \in T$ such that
$ S(x_{q,\ell}, c_1 2^{-q}) \subset I_{q,\ell} \subset S(x_{q,\ell}, c_2 2^{-q})$
and $\{ x_{q,\ell} : 1, \dots, n_q \} \subset \{x_{q+1, \ell} : \ell = 1, \dots, n_{q+1} \}$ for
all $q \ge 1$.
\end{enumerate}
\end{lemma}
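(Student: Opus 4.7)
The plan is to quote Theorem 2.1 together with Remark 2.2 of \cite{KRS12}, which construct nested families of generalized dyadic cubes in any geometrically doubling metric space, and to check that $(T, \Delta)$ falls in that class. Everything else is a direct translation between their notation and the labels used here.

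First I would verify that $\Delta$ is actually a metric on $\mathbb{R}^k$. Since $\alpha_j \in (0,1]$, the map $t \mapsto t^{\alpha_j}$ on $[0,\infty)$ is concave and vanishes at $0$, so $|\cdot|^{\alpha_j}$ is subadditive on $\mathbb{R}$; hence each summand in \eqref{Def:De} is a metric, and a sum of metrics is a metric. The identity between $(\mathbb{R}^k, |\cdot|)$ and $(\mathbb{R}^k, \Delta)$ is a homeomorphism, so compactness of $T$ in the Euclidean topology immediately gives compactness of $(T, \Delta)$, which in turn gives completeness and separability.

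Next I would verify that $(T, \Delta)$ is geometrically doubling. Using the sandwich $B_{r/k}(x) \subseteq S(x, r) \subseteq B_r(x)$ recorded just before Assumption \ref{a2}, a Euclidean box $B_{2r}(x)$ can be split into at most $\prod_{j=1}^k \lceil(4k)^{1/\alpha_j}\rceil$ sub-boxes of the form $B_{r/k}(\cdot)$ by axis-parallel subdivision. Passing back through the sandwich, this shows that every $\Delta$-ball $S(x, 2r)$ is covered by a number of $\Delta$-balls of radius $r$ that depends only on $k$ and on $\alpha_1, \dots, \alpha_k$, which is the geometric doubling condition required by \cite{KRS12}.

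With those two preliminaries in hand, I would invoke Theorem 2.1 of \cite{KRS12} with scale parameter $2^{-q}$ to obtain, for each $q \ge 1$, a Borel partition $\mathscr{Q}_q$ of $T$ indexed by a finite set of centers $x_{q,\ell}$ satisfying the nesting property (ii) and the sandwich (iii) with suitable constants $c_1, c_2$ depending only on the doubling constant. Remark 2.2 of \cite{KRS12} provides the additional compatibility that the centers at level $q$ may be chosen inside the set of centers at level $q+1$, which yields the last inclusion in (iii). Property (i) is automatic since each $\mathscr{Q}_q$ is built as a partition. The only step that requires a little care — and is the mildest obstacle — is bookkeeping the constants $c_1$ and $c_2$ as one translates between $\Delta$-balls and the Euclidean boxes $B_r(\cdot)$; there is no genuine probabilistic or analytic content beyond citing \cite{KRS12}.
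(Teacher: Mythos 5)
Your proposal is correct and follows exactly the route the paper itself takes: the paper gives no independent proof of this lemma but simply cites Theorem 2.1 and Remark 2.2 of \cite{KRS12} applied to $(T,\Delta)$. Your added verifications that $\Delta$ is a metric (via concavity/subadditivity of $t\mapsto t^{\alpha_j}$) and that $(T,\Delta)$ is geometrically doubling (via the sandwich $B_{r/k}(x)\subseteq S(x,r)\subseteq B_r(x)$) are accurate and supply detail the paper leaves implicit.
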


\section{Proof of Theorem \ref{main thm}}
\label{Sec:4}

Recall that, by \eqref{3.2}, it suffices to show that for all integers $n$, 
we can find a small $\rho_0 > 0$  such that for all $\rho \in (0, \rho_0)$
and all points $(t^1, \dots, t^m) \in A_n$, 
$M_\rho$ is empty with
probability 1. When $m Q < (m-1)d$ (we refer to this as the
sub-critical case), the last statement can be proved easily
by using a standard covering argument based on the uniform
modulus of continuity of $v = \{v(x), x \in T\}$ on compact intervals.
In the following we provide a unified proof for both the critical and
sub-critical cases.

For any $n \ge 1$ fixed, we choose a constant $\rho_0>0$ such that 
Assumption \ref{a2}, Lemma \ref{lemma4} and Lemma \ref{lemma5}
hold for all $\rho \le \rho_0$ (e.g., we take
$\rho_0 \le \varepsilon_0 \wedge \frac{1}{2c'n}$). Let $(t^1, \dots, t^m) \in A_n $ be 
fixed in the rest of the proof. 
By Assumption \ref{a2}, we can find $(\hat{t}^1, \dots, \hat{t}^m) \in
B^1_{c\rho} \times \dots \times   B^m_{c\rho}$ such that \eqref{3.3}
holds. Furthermore, we assume that 
$B^j_{c\rho_0}  \subset T$ for all $1 \le j \le m$ (otherwise we
take the intersection with $T$).

Fix $\rho \in (0, \rho_0)$. For each integer $p \ge 1$, consider the random set
\begin{equation} \label{Eq:Osc1}
\begin{split}
R_p = \Bigg\{ & (s^1, \dots, s^m) \in B^1_{2\rho} \times \dots \times B^m_{2\rho} :
\exists \, r \in [2^{-2p}, 2^{-p}] \text{ such that}\\
&\qquad  \sup_{1 \le i \le m} \, \sup_{x^i \in {S}(s^i, 4c_2r)} |v(x^i) - v(s^i)|
\le K_1 r \left( \log\log\frac{1}{r} \right)^{-1/Q} \Bigg\},
\end{split}
\end{equation}
where $c_2$ is the constant given by Lemma \ref{lemma6}.
Fix $\beta \in (0, \min\{\beta^*, 1\})$, where $\beta^* = \min\{ \delta_j/\alpha_j - 1 : j = 1, \dots, k\}$.
Let $\lambda$ denote the Lebesgue measure on $\mathbb{R}^{mk}$.
Consider  the event
\begin{align*}
& \Omega_{p, 1} = \Big\{ \lambda(R_p) \ge \lambda(B^1_{2\rho} \times \dots
\times B^m_{2\rho})(1 - \exp(-\sqrt{p}/4)) \Big\}.
\end{align*}
This event states that a very large portion of $B^1_{2\rho} \times \dots \times B^m_{2\rho}$
is taken by the random set $R_p$, which is the collection of points at 
which the sample function $v(x)$ has small oscillations. 
The points in $R_p$ are referred to as  ``good points" for $v$. By Markov's inequality, 
\begin{align*}
\P(\Omega_{p, 1}^c) &= \P\Big\{ \lambda(B^1_{2\rho} \times \dots \times B^m_{2\rho} \setminus R_p) > 
\lambda(B^1_{2\rho} \times \dots \times B^m_{2\rho}) \exp(-\sqrt{p}/4) \Big\}\\
& \le \frac{\E \big[\lambda(B^1_{2\rho} \times \dots \times B^m_{2\rho} \setminus R_p)\big]} 
{\lambda(B^1_{2\rho} \times \dots \times B^m_{2\rho}) \exp(-\sqrt{p}/4)}.
\end{align*}
Then by Fubini's theorem, the numerator is equal to
$$
\E \big[\lambda(B^1_{2\rho} \times \dots \times B^m_{2\rho} \setminus R_p)\big]
= \int_{B^1_{2\rho} \times \dots \times B^m_{2\rho}} \P\Big( (s^1, \dots, s^m) \in B^1_{2\rho} 
\times \dots \times B^m_{2\rho} \setminus R_p \Big)\, ds^1 \cdots ds^m.
$$
By applying Proposition \ref{prop} with $c = 4c_2$, we derive that for
$p$ sufficiently large,
$$
\P \Big((s^1, \dots, s^m) \in R_p \Big) \ge 1 - \exp(-\sqrt{p}/2)
$$
for all $(s^1, \dots, s^m)  \in B^1_{2\rho} \times \dots \times B^m_{2\rho}.$
It follows that $\P(\Omega_{p, 1}^c) \le \exp(-\sqrt{p}/4)$
and 
$\sum_{p = 1}^\infty \P(\Omega_{p,1}^c) < \infty$. 

Consider the event
$$
\Omega_{p, 2} = \left\{ \max_{1 \le i \le m} |v(\hat{t}^i)| \le 2^{\beta p} \right\}.
$$
By Lemma \ref{lemma4}, we can control the oscillation of $v^2(x)$ on $\Omega_{p, 2}$. It is clear that 
$\sum_{p=1}^\infty \P(\Omega_{p,2}^c) < \infty$.

For the points which are not in the random set $R_p$,  the sample function $v (x)$ may have large 
oscillations in their neighborhoods. These points are referred to as ``bad points" for $v$. In order to 
quantify their effect we introduce the following event $\Omega_{p, 3}$:
\begin{equation}\label{Eq:Osc2}
\Omega_{p, 3} = \bigg\{ \forall \, I \in \mathscr{Q}_{2p}, \,
\sup_{x, y \in I} |v(x) - v(y)| \le K_3 2^{-2p} p^{1/2} \bigg\}.
\end{equation}
Recall that $\mathscr{Q} = \bigcup_{q = 1}^\infty\mathscr{Q}_q$ is the family of
generalized dyadic cubes given by Lemma \ref{lemma6} for  the compact interval $T$.

For every $I  \in \mathscr{Q}_{2p}$, Lemma \ref{lemma1} implies that the
diameter of $I$ under the canonical metric $d_v(x, y) = \|v(x) - v(y)\|_{L^2}$ is at most
$c_3\, 2^{-2p}$. By applying Lemma 2.1 in Talagrand \cite{T95} (see also Lemma 3.1 in
\cite{DMX17}) we see that for any positive constant $K_3$ and $p$ large,
\[
\P\bigg(\sup_{x, y \in I} |v(x) - v(y)| \ge K_3 2^{-2p} p^{1/2} \bigg) \le
\exp \bigg(-\Big( \frac{K_3 }{c_3}\Big)^2 p\bigg).
\]
Notice that the cardinality of the family $\mathscr{Q}_{2p}$ of generalized dyadic cubes
of order $2p$ is at most $K 2^{2pQ}$. We can verify directly that
$\sum_{p=1}^\infty  \P(\Omega_{p,3}^c) < \infty$ provided  $K_3$ is chosen to satisfy
$K_3 > 2c_3 Q\ln 2 $.

Let $\Omega_p = \Omega_{p, 1} \cap \Omega_{p,2} \cap \Omega_{p,3}$ and
\[
 \Omega^* = \bigcup_{\ell \ge 1} \bigcap_{p \ge \ell} \Omega_p.
 \]
It follows from the above that the event $\Omega^*$ occurs with probability 1. Hence, 
almost surely $ \Omega_p$ occurs for all $p$ large enough. Notice that 
on the event $\Omega_p$, the oscillations of $v$ near the good and bad points can be
explicitly controlled by the inequalities in (\ref{Eq:Osc1}) and  (\ref{Eq:Osc2}), respectively. 
Moreover, because of Lemma  \ref{lemma4} and $\Omega_{p,2}$, it 
can be verified that similar inequalities (with constants larger than $K_1$ and $K_3$) hold 
for $v^1$.  In the following, we will use these observations to show that, for every 
$\omega \in \Omega^*$, we can construct families of balls in $\mathbb{R}^d$ that cover 
$M_\rho$.

For each $p \ge 1$, we first construct a family $\mathscr{G}_p$ of subsets
in $\mathbb{R}^{mk}$ (depending on $\omega$). Denote by $\mathscr{C}_p$
the family of subsets of $T^m$ of the form $C = I_{q, \ell_1}
\times \dots \times I_{q, \ell_m}$ for some integer $q \in [p, 2p]$,
where $I_{q, \ell_i} \in \mathscr{Q}_q$
are the generalized dyadic cubes of order $q$ in Lemma \ref{lemma6}. 

We say that a dyadic cube $C = I^1 \times \dots \times I^m$ of order
$q$ is \emph{good} if it has the property that
\begin{equation}\label{Eq:good}
\sup_{1 \le i \le m}\, \sup_{x, y \in I^i} |v^1(x) - v^1(y)| \le d_q,
\end{equation}
where
\begin{equation}\label{Eq:dq}
d_q = 2\bigg(K_1 + K_2 \sum_{j=1}^k(2c_2)^{\delta_j/\alpha_j} \bigg) 2^{-q}(\log\log 2^{q})^{-1/Q}.
\end{equation}
For each $x \in B^1_{2\rho} \times \dots \times B^m_{2\rho}$, consider the
good dyadic cube $C$ containing $x$ (if any) of smallest order $q$, where
$p \le q \le 2p$.  By property (ii) of Lemma \ref{lemma6}, we obtain in this way
a family of disjoint good dyadic cubes of order $q \in [p, 2p]$ that meet
the set $B^1_{2\rho} \times \dots \times B^m_{2\rho}$. We denote this
family by $\mathscr{G}_p^1$.

Let $\mathscr{G}_p^2$ be the family of dyadic cubes in $T^m$ of order $2p$
that meet $B^1_\rho \times \dots \times B^m_\rho$ but are not contained in
any cube of $\mathscr{G}_p^1$. 
Let $\mathscr{G}_p = \mathscr{G}_p^1 \cup
\mathscr{G}_p^2$. Notice that for each $C \in \mathscr{C}_p$, the events
$\{ C \in \mathscr{G}_p^1\}$ and $\{ C \in \mathscr{G}_p^2 \}$ are in the
$\sigma$-algebra $\Sigma_1 := \sigma\{v^1(x):\, x \in T\}$.

Next we construct a family of balls in $\mathbb{R}^d$
(depending on $\omega$) as follows. For each $C \in \mathscr{C}_p$, we
choose a distinguished (non-random) point $x_C = (x^1_C, \dots, x^m_C)$ in
$C \cap (B^1_{2\rho} \times \dots \times B^m_{2\rho})$. If $C$ is
a cube of order $q$, then we define the ball $B_{p, C}$  as follows.
\begin{enumerate}
\item[(i)] If $C \in \mathscr{G}_p^1$, take $B_{p, C}$ as the Euclidean ball in $\R^d$ 
of center $v(x^1_C)$ of radius $r_{p, C} = 4d_q$. Recall that $d_q$ is defined in 
(\ref{Eq:dq}).

\item[(ii)] If $C \in \mathscr{G}_p^2$, take $B_{p, C}$ as the Euclidean ball in $\R^d$
of center $v(x^1_C)$ of radius $r_{p, C} = 2K_3 2^{-2p}\,p^{1/2}$.
\item[(iii)] Otherwise, take $B_{p, C} = \varnothing$ and $r_{p, C} = 0$.
\end{enumerate}
Note that for each $p \ge1$ and $ C \in \mathscr{C}_p$, the random variable
$r_{p, C}$ is $\Sigma_1$-measurable. 

Intuitively, if a ball $B_{p, C}$ in $\R^d$ defined above contains a multiple point of $v$ from 
$C \cap (B^1_{2\rho} \times \dots \times B^m_{2\rho})$ (i.e., there exists  $(y^1, \dots, y^m) 
\in C \cap (B^1_{2\rho} \times \dots \times B^m_{2\rho})$ such that $v(y^1) = \cdots = v(y^m)
 \in B_{p, C}$), then  $v(x^1_C), v(x^2_C), \dots, v(x^m_C)$  should be close to each other 
 because $|v(x^i) - v(y^i)|$  is small for $1 \le i \le k$. Hence,  in order to construct  
a covering $\mathscr{F}_p(\omega)$ of the set $M_\rho$ of multiple points, we consider 
the event
\[ \Omega_{p, C} = \left\{ \omega \in \Omega : \sup_{2 \le i \le m}
\big| v(x^1_C, \omega) - v(x^i_C, \omega) \big| \le r_{p, C}(\omega) \right\}.
\]
This is the event that $v(x^2_C), \dots, v(x^m_C)$ are all within the ball of radius $r_{p, C}$ 
centered at $v(x^1_C)$.

If $\omega \in \Omega_p \cap \Omega_{p, C}$, define $\mathscr{F}_p(\omega) =
\{ B_{p, C} : C \in \mathscr{G}_p(\omega) \}$. 
Otherwise,  define $\mathscr{F}_p(\omega) = \varnothing$.

Choose an integer $p_0$ such that
\begin{equation}\label{p_0}
2c_2 2^{-p} \le \rho \ \hbox{ and }\ \exp(-\sqrt{p}/4) p^{mQ} (\log p)^{m} \le \rho^{mQ},
\end{equation}
and $\Omega_p$ occurs for all $p \ge p_0$.
We now show that $\mathscr{F}_p(\omega)$ covers $M_\rho(\omega)$ whenever $p \ge p_0$ 
and $\omega \in \Omega_p$.

Let $\omega \in \Omega_p$ and $z \in M_\rho(\omega)$. By definition, we can find a point $(y^1, \dots, y^m)
\in B^1_\rho \times \dots \times B^m_\rho$ such that $z = v(y^1, \omega) = \dots = v(y^m, \omega)$. By the
definitions of $\mathscr{G}_p^1$ and $\mathscr{G}_p^2$, the family $\mathscr{G}_p(\omega)$ of dyadic
cubes covers $B^1_\rho \times \dots \times B^m_\rho$, thus the point $(y^1, \dots, y^m)$ is contained
in some $C = I^1 \times \dots \times I^m \in \mathscr{G}_p(\omega)$. We will show that $z \in B_{p, C}$
and $\omega \in \Omega_{p, C}$. To this end, we distinguish two cases.

{\it Case 1.}\, If $C \in \mathscr{G}_p^1(\omega)$, then it is a good dyadic cube of order $q \in [p, \,2p]$
such that
$$\sup_{1 \le i \le m}|v^1(x^i_C, \omega) - v^1(y^i, \omega)| \le d_q.$$
By Lemma \ref{lemma6}, $x^i_C, y^i \in I^i \subset {S}(x^*, c_2 2^{-q})$ for some $x^* \in T$, so we have
\begin{equation}\label{Eq:xy}
\sum_{j=1}^k|x^i_{C, j} - y^i_j|^{\delta_j} \le \sum_{j=1}^k(2c_2)^{\delta_j/\alpha_j} 2^{-q(1 + \beta^*)},
\end{equation}
recall that $ \beta^* = \min\limits_{1 \le j \le k}\big\{\frac{\delta_j}{\alpha_j} - 1 \big\}$.
Since $\omega \in \Omega_{p,2}$, Lemma \ref{lemma4} and \eqref{Eq:xy} imply that
\begin{equation}\label{Eq:1star}
\sup_{1 \le i \le m} \big|v^2(x^i_C) - v^2(y^i)\big|
\le K_2\sum_{j=1}^k(2c_2)^{\delta_j/\alpha_j}2^{-q(1 + \beta^* - \beta)} \le d_q
\end{equation}
since $\beta < \beta^*$. It follows that
$$ \sup_{1 \le i\le m} \big|v(x^i_C, \omega) - z\big| =
\sup_{1 \le i \le m} \big |v(x^i_C, \omega) - v(y^i, \omega) \big| \le 2 d_q,$$
which implies that $z \in B_{p, C}$ and $\omega \in \Omega_{p, C}$.

{\it Case 2.}\, Now we assume $C \in \mathscr{G}_p^2(\omega)$. Since
$\omega \in \Omega_{p, 3}$, we have
$$\sup_i |v(x^i_C, \omega) - z| = \sup_{i} |v(x^i_C, \omega) - v(y^i, \omega)| \le K_3 2^{-2p} p^{1/2},$$
hence $z \in B_{p, C}$ and $\omega \in \Omega_{p, C}$.

Therefore, for every $\omega \in \Omega^*$, $\mathscr{F}_p(\omega)$ covers $M_\rho(\omega)$ for all $p$
large enough. We claim that, with probability 1, the family $\mathscr{F}_{p}$ is empty for infinitely
many $p$. This will imply that $M_\rho$ is empty with probability 1 and the proof will then be complete.

We prove the aforementioned claim by contradiction. Suppose the claim is not true. Then the event
$\Omega'$ that $\mathscr{F}_{p}$ is nonempty for all large $p$ has positive probability and the event
$\Omega' \cap \Omega^* = \bigcup_{\ell \ge 1} \bigcap_{p \ge \ell} (\Omega' \cap \Omega_{p})$ also has
positive probability. Denote
\[
\phi(r) = r^{mQ - (m-1)d}(\log\log(1/r))^{m}, \qquad f(r) = r^{mQ}(\log\log(1/r))^m,
\]
and consider the random variables $X_p$ defined by
\begin{equation}\label{Eq:Xp}
X_p := \1_{\Omega' \cap \Omega_p} \sum_{B_{p, C} \in \mathscr{F}_p} \phi(r_{p, C}) =
\1_{\Omega' \cap \Omega_p} \sum_{C \in \mathscr{C}_p} f(r_{p, C})r_{p, C}^{-(m-1)d}\1_{\{ C \in \mathscr{G}_p\}}
\1_{\Omega_{p, C}}.
\end{equation}

Let  $X := \liminf_{p} X_p$. Since $mQ \le (m-1)d$, we have $\phi(r) \to \infty$ as $r \to 0+$.
Also, for every $\omega \in \Omega' \cap \Omega^*$, 
$\mathscr{F}_{p}(\omega) $ is not empty for all large $p$. 
Hence, by the definition of $X_p$ in (\ref{Eq:Xp}), $X(\omega) = \infty$ on
$\Omega' \cap \Omega^*$. In particular, $\E(X) = \infty$.

On the other hand, notice that $\mathscr{G}_p^1$ covers $R_p$ on the event $\Omega_p$
for all $p \ge p_0$. Indeed, if $\omega \in \Omega_p$, $s = (s^1, \dots, s^m) \in R_p(\omega)$,
and $C = I^1 \times \dots \times I^m$ is the dyadic cube of order $q$ in $\mathscr{G}_p^1$ containing $s$,
then there exists $r \in [2^{-2p}, 2^{-p}]$ that satisfies the condition
in the definition of $R_p$ and we can find $q$ such that $2^{-q-1} < r \le 2^{-q}$,
$p \le q \le 2p$, and
\begin{equation}\label{Eq:2star}
\sup_{1 \le i \le m} \, \sup_{x^i \in {S}(s^i, 2c_2 2^{-q})} |v(x^i) - v(s^i)|
\le K_1 2^{-q} (\log\log2^q)^{-1/Q}.
\end{equation}
By the property that $I^i \subset {S}(x', c_2 2^{-q})$ for some $x'$ and by Lemma
\ref{lemma4}, it follows from (\ref{Eq:1star}) and  (\ref{Eq:2star}) that (\ref{Eq:good}) holds.
Thus $C$ is a good dyadic cube. This proves that $\mathscr{G}_p^1(\omega)$ covers
$R_p(\omega)$.

By the choice of $p_0$ in \eqref{p_0}, the cubes in $\mathscr{G}_p^2$ are contained in $B^1_{2\rho}
\times \dots \times B^m_{2\rho}$, and thus in $B^1_{2\rho} \times \dots \times B^m_{2\rho}
\setminus R_p$, the Lebesgue measure of which is at most $\exp(-\sqrt{p}/4)$ on $\Omega_p$.
For any $C = I^1 \times \dots \times I^m \in \mathscr{G}_p^2$ of order $2p$, each $I^i$
contains a set $S(x^i, c_1 2^{-2p})$ for some $x^i$ and the set has Lebesgue measure
$K2^{-2pQ}$, so $\Omega_p$ is contained in the event $\widetilde{\Omega}_p$ that the
cardinality of $\mathscr{G}_p^2$ is at most $K 2^{2pmQ}\exp(-\sqrt{p}/4)$.

Recall that both  $\mathscr{G}_p^1 $ and $ \mathscr{G}_p^2 $ depend on $\Sigma_1$.
We see that  $\widetilde{\Omega}_p$ belongs to the $\sigma$-algebra $\Sigma_1$. Hence
for $p \ge p_0$,
\begin{equation} \label{Eq:Xp2}
\begin{split}
\E(X_p) &\le \E\bigg( \1_{\widetilde{\Omega}_p} \sum_{C \in \mathscr{C}_p} f(r_{p, C}) r_{p, C}^{-(m-1)d}
\1_{\{C \in \mathscr{G}_p\}} \1_{\Omega_{p, C}} \bigg)\\
& = \E\bigg( \1_{\widetilde{\Omega}_p} \sum_{C \in \mathscr{C}_p} f(r_{p, C}) r_{p, C}^{-(m-1)d}
\1_{\{ C \in \mathscr{G}_p\}} \P(\Omega_{p, C} | \Sigma_1) \bigg)\\
& \le K \E\bigg( \1_{\widetilde{\Omega}_p} \sum_{C \in \mathscr{C}_p} f(r_{p, C})
\1_{\{ C \in \mathscr{G}_p\}} \bigg),
\end{split}
\end{equation}
where the last inequality follows from Lemma \ref{lemma5} and independence of $v^1$ and $v^2$.

Now consider any dyadic cube $C \in \mathscr{C}_p$ of order $q$. If $C \in \mathscr{G}_p^1$, then
$f(r_{p, C}) \le K 2^{-qmQ} \le K \lambda(C)$ (where $\lambda(\cdot)$ denotes Lebesgue measure); 
if $C \in \mathscr{G}_p^2$, then $f(r_{p, C})
\le K 2^{-2pmQ} p^{mQ/2}(\log p)^{m}$. Moreover, for $p \ge p_0$ the dyadic cubes in $\mathscr{G}_p^1$
are disjoint and contained in $B^1_{3\rho} \times \dots \times B^m_{3\rho}$. These observations,
together with (\ref{Eq:Xp2}) and \eqref{p_0}, imply
\[
\E(X_p) \le K\, \E\bigg( \sum_{C \in \mathscr{C}_p} \lambda(C) \1_{\{ C \in \mathscr{G}_p^1\}}
 + p^{mQ/2} (\log p)^m\,  \exp(-\sqrt{p}/4) \bigg) \le K \rho^{mQ}.
\]
By Fatou's lemma, we derive $\E(X) \le K \rho^{mQ}< \infty$. This is a contradiction.
The proof of Theorem \ref{main thm} is complete.
\qed

\section{Examples}
In this section we provide some examples where Theorem \ref{main thm} is applicable. These include fractional
Brownian sheets, and the solutions to systems of stochastic heat and wave equations.

\subsection{Fractional Brownian sheets}

The $(N, d)$-fractional Brownian sheet with Hurst parameter $H = (H_1, \dots, H_N)\in (0, 1)^N$
is an $\mathbb{R}^d$-valued continuous Gaussian random field $\{v(x), x \in \mathbb{R}^N_+ \}$ with
mean zero and covariance
\begin{equation*}
\E(v_j(x)v_\ell(y)) = \delta_{j,\ell} \prod_{i = 1}^N \frac{1}{2}\left(|x_i|^{2H_i} + |y_i|^{2H_i} - |x_i - y_i|^{2H_i}\right).
\end{equation*}
When $N = 1$, it is the fractional Brownian motion and the non-existence of
multiple points in the critical dimension was proved by Talagrand \cite{T98}.
So we focus on the case $N \ge 2$.

Let $\alpha \in (0, 1)$ be a constant. We start with the identity that any $x \in \mathbb{R}$,
\begin{equation*}
|x|^{2\alpha} = c^2_\alpha \int_{\mathbb{R}} \frac{1-\cos x\xi}{|\xi|^{2\alpha+1}} \,d\xi, \quad
\hbox{ where }\ c_\alpha = \left( \int_{\mathbb{R}} \frac{1-\cos \xi}{|\xi|^{2\alpha+1}} d\xi \right)^{-1/2},
\end{equation*}
which can be obtained by a change of variable in the integral.
It implies that for any $x, y \in \mathbb{R}$,
\begin{equation*}
\frac{1}{2}\left( |x|^{2\alpha} + |y|^{2\alpha} - |x - y|^{2\alpha} \right) = c_\alpha^2
\int_{\mathbb{R}}\left[\frac{(1 - \cos{x \xi})(1 - \cos{y \xi})}
{|\xi|^{2\alpha+1}} + \frac{\sin{x\xi}\sin{y\xi}}{|\xi|^{2\alpha+1}} \right]\,d\xi.
\end{equation*}
It follows that for $H \in (0, 1)^N$ and $x, y \in \mathbb{R}^N_+$,
we can write
\begin{equation}\label{5.1}
\prod_{i=1}^N\frac{1}{2}\left( |x_i|^{2H_i} + |y_i|^{2H_i} - |x_i - y_i|^{2H_i} \right)
= c_H^2 \sum_{p \in \{0, 1\}^N}\int_{\mathbb{R}^N}
\prod_{i=1}^N \frac{f_{p_i}(x_i \xi_i)f_{p_i}(y_i \xi_i)}{|\xi_i|^{2H_i + 1}} \, d\xi,
\end{equation}
where $f_0(t) = 1 - \cos{t}$ and $f_1(t) = \sin{t}$. It gives a representation for the
fractional Brownian sheet: If $W_p$, $p \in \{0, 1\}^N$,
are independent $\mathbb{R}^d$-valued Gaussian white noises on $\mathbb{R}^N$ and
\begin{equation}\label{rep}
v(x):= c_H \sum_{p \in \{0, 1\}^N} \int_{\mathbb R^N} \prod_{i=1}^N \frac{f_{p_i}(x_i \xi_i)}{|\xi_i|^{H_i + 1/2}} \, W_p(d\xi),
\end{equation}
then (a continuous modification of) $\{ v(x), x \in \mathbb{R}^N_+ \}$ is an $(N, d)$-fractional Brownian
sheet with Hurst index $H$. In particular, when $H_i =\frac 1 2$ for $i = 1, \dots, k$, the Gaussian random field
$\{v(x), x \in \mathbb{R}^N\}$ is the Brownian sheet and (\ref{rep}) provides a harmonizable representation for it.

It is known \cite{WX07} that for any compact interval $I$ in $(0, \infty)^N$, there exist positive finite constants $c_1$ and $c_2$ such that for all $x, y \in I$,
\begin{equation}\label{fBs1}
c_1 \sum_{j=1}^N |x_j - y_j|^{2H_j} \le \E(|v(x) - v(y)|^2) 
\le c_2 \sum_{j=1}^N |x_j - y_j|^{2H_j}.
\end{equation}

We take $T = (0, \infty)^N$ [since $v(x) = 0$ for all $x \in \partial\R^N_+$  a.s., the existence of
multiple points is trivial on $\partial\R^N_+$].  In Lemmas \ref{lemma5.1} - \ref{lemma5.3} below, 
we use the representation (\ref{rep}) to show that
the fractional Brownian sheet satisfies the assumptions of Theorem \ref{main thm} on  $T $.

Define the random field $\{ v(A, x), A \in \mathscr{B}(\mathbb{R}_+), x \in T \}$ by
\begin{equation*}
v(A, x) = c_H \sum_{p \in \{0, 1\}^N} \int_{\{\max_i |\xi_i|^{H_i} \in A\}}
 \prod_{i=1}^N \frac{f_{p_i}(x_i \xi_i)}{|\xi_i|^{H_i + 1/2}} \, W_p(d\xi).
\end{equation*}

\begin{lemma}\label{lemma5.1}
For any $n > 1$, let $F_n = [1/n, n]^N$, $a_0 = 0$ and $\gamma_i = H_i^{-1} - 1$.
There is a constant $c_0 > 0$ depending on $n$ such that for all $0 \le a < b \le \infty$ and $x, y \in F_n$,
\begin{equation}\label{5.3}
\big\|(v(x) - v([a, b), x)) - (v(y) - v([a, b), y))\big\|_{L^2}
\le c_0 \bigg( \sum_{i=1}^N a^{\gamma_i}|x_i - y_i| + b^{-1} \bigg).
\end{equation}
\end{lemma}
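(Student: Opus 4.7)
The plan is to exploit the harmonizable representation together with the independence structure built into the construction of $v(\cdot, x)$. First decompose
\[
v(x) - v([a, b), x) = v([0, a), x) + v([b, \infty), x),
\]
and observe that the two summands are independent, since they are driven by integrals of the same underlying white noises $W_p$ over the disjoint spectral sets $\{\max_i |\xi_i|^{H_i} < a\}$ and $\{\max_i |\xi_i|^{H_i} \ge b\}$. Consequently the squared $L^2$-norm of the increment in (\ref{5.3}) splits as the sum of the two squared $L^2$-norms, and it suffices to bound the low- and high-frequency pieces by $K \sum_i a^{\gamma_i}|x_i - y_i|$ and $K b^{-1}$ separately.

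For the low-frequency piece I apply the telescoping identity
\[
\prod_{i=1}^N f_{p_i}(x_i\xi_i) - \prod_{i=1}^N f_{p_i}(y_i\xi_i) = \sum_{j=1}^N \biggl(\prod_{i<j} f_{p_i}(x_i\xi_i)\biggr)\bigl[f_{p_j}(x_j\xi_j) - f_{p_j}(y_j\xi_j)\bigr]\biggl( \prod_{i>j} f_{p_i}(y_i\xi_i)\biggr),
\]
followed by the bound $(\sum_j a_j)^2 \le N \sum_j a_j^2$. Both $f_0 = 1 - \cos$ and $f_1 = \sin$ are $1$-Lipschitz, so the middle bracket is dominated by $|\xi_j||x_j - y_j|$. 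For the side factors I use $|f_{p_i}(t)|^2 \le K\min(1, t^2)$, so that after the substitution $u = x_i\xi_i$ (whose Jacobian yields a factor $x_i^{2H_i}$, uniformly bounded on $F_n$ since $x_i \in [1/n, n]$) each side integral $\int_{\mathbb{R}} |f_{p_i}(u)|^2\,|u|^{-2H_i - 1}\, du$ is finite. The $j$-th integral is computed explicitly,
\[
|x_j - y_j|^2 \int_{|\xi_j| < a^{1/H_j}} |\xi_j|^{1 - 2H_j}\, d\xi_j = \frac{1}{1-H_j}\, a^{2\gamma_j}|x_j - y_j|^2,
\]
using $\gamma_j = H_j^{-1} - 1$ and $H_j \in (0, 1)$. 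Summing over $j$ and $p \in \{0, 1\}^N$, and then taking square roots, yields $\|v([0, a), x) - v([0, a), y)\|_{L^2} \le K \sum_{j=1}^N a^{\gamma_j}|x_j - y_j|$.

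For the high-frequency piece I use the triangle inequality
\[
\|v([b, \infty), x) - v([b, \infty), y)\|_{L^2} \le \|v([b, \infty), x)\|_{L^2} + \|v([b, \infty), y)\|_{L^2},
\]
together with the union bound $\{\max_i |\xi_i|^{H_i} \ge b\} \subset \bigcup_{i_0 = 1}^N \{|\xi_{i_0}| \ge b^{1/H_{i_0}}\}$. On each piece the spectral integrand factorizes in $\xi$; the $i_0$-factor is bounded by $K \int_{|\xi_{i_0}| \ge b^{1/H_{i_0}}} |\xi_{i_0}|^{-2H_{i_0} - 1}\, d\xi_{i_0} = K H_{i_0}^{-1} b^{-2}$ (since $|f_{p_{i_0}}|^2 \le 4$), while the remaining factors are uniformly bounded on $F_n$ by the same integrals used in the low-frequency step. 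Hence $\|v([b, \infty), x)\|_{L^2}^2 \le K b^{-2}$, and the high-frequency piece contributes at most $K b^{-1}$. Adding the two estimates produces (\ref{5.3}); the calculation is essentially routine, and the only item requiring care is the uniform control of the spectral integrals in $x, y \in F_n$, which is ensured by the bounds $x_i, y_i \in [1/n, n]$ through the change of variables $u = x_i \xi_i$.
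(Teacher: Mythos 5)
Your proof is correct and follows essentially the same route as the paper's: the same spectral decomposition of $\mathbb{R}^N\setminus\{\max_i|\xi_i|^{H_i}\in[a,b)\}$ into the low-frequency box and the high-frequency union, the same telescoping product identity, and the same elementary bounds on $f_0,f_1$, with the constants controlled uniformly via $x_i,y_i\in[1/n,n]$. The only (harmless) deviation is in the high-frequency part, where you discard the increment structure via the triangle inequality and bound $\|v([b,\infty),x)\|_{L^2}\le Kb^{-1}$ pointwise, whereas the paper keeps the difference and bounds the resulting cross terms $b^{-2}\,|x_i-y_i|^{2H_i}$ using the compactness of $F_n$; both give the required $Kb^{-1}$.
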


\begin{proof}
Without loss of generality, we may assume $d=1$.
For any $0 \le a < b \le \infty$, let $B = \{\xi \in \R^N: \max_i |\xi_i|^{H_i} \in [a, b)\}$.
Then we can express its complement as
$$\mathbb{R}^N \setminus B = \big\{ |\xi_k| < a_k, \forall 1 \le k \le N \big\}\cup
\bigcup_{k=1}^N \big\{ |\xi_k| \ge b_k \big\},$$
where $a_i = a^{1/H_i}$ and $b_i = b^{1/H_i}$.

Note that
\[
\prod_{i=1}^N \frac{f_{p_i}(x_i \xi_i)}{|\xi_i|^{H_i+1/2}} - \prod_{i=1}^N \frac{f_{p_i}(y_i \xi_i)}{|\xi_i|^{H_i+1/2}} =
\sum_{i=1}^N \bigg( \frac{f_{p_i}(x_i\xi_i) - f_{p_i}(y_i\xi_i)}{|\xi_i|^{H_i+1/2}}
\prod_{1 \le j < i} \frac{f_{p_j}(y_j\xi_j)}{|\xi_j|^{H_j+1/2}}
\prod_{i < j \le N} \frac{f_{p_j}(x_j\xi_j)}{|\xi_j|^{H_j+1/2}} \bigg).
\]
It follows that
\begin{equation*}
\begin{split}\label{Eq:fbs1}
&  \|(v(x) - v([a, b), x)) - (v(y) - v([a, b), y))\|_{L^2}\\
& \le c_H \sum_{p \in \{0, 1\}^N} \Bigg[ \int_{\{ |\xi_k| < a_k, \forall k \}} \bigg(\prod_{i=1}^N \frac{f_{p_i}(x_i \xi_i)}{|\xi_i|^{H_i+1/2}} -
\prod_{i=1}^N \frac{f_{p_i}(y_i \xi_i)}{|\xi_i|^{H_i+1/2}} \bigg)^2  d\xi \, \Bigg]^{1/2} \\
& \quad + c_H \sum_{p \in \{0, 1\}^N} \sum_{k=1}^N \Bigg[ \int_{\{|\xi_k| \ge b_k \}} \bigg(\prod_{i=1}^N
\frac{f_{p_i}(x_i \xi_i)} {|\xi_i|^{H_i+1/2}} - \prod_{i=1}^N \frac{f_{p_i}(y_i \xi_i)}{|\xi_i|^{H_i+1/2}}
\bigg)^2  d\xi \, \Bigg]^{1/2}\\
& \le c_H \sum_{p} \sum_{i=1}^N \Bigg[ \int_{\{ |\xi_k| < a_k, \forall k \}} \bigg(\frac{f_{p_i}(x_i\xi_i) -
 f_{p_i}(y_i\xi_i)}{|\xi_i|^{H_i+1/2}} \prod_{1 \le j < i} \frac{f_{p_j}(y_j\xi_j)}{|\xi_j|^{H_j+1/2}}
 \prod_{i < j \le N} \frac{f_{p_j}(x_j\xi_j)}{|\xi_j|^{H_j+1/2}}  \bigg)^2  d\xi \, \Bigg]^{1/2} \\
& \quad + c_H \sum_{p}  \sum_{i=1}^N  \sum_{k=1}^N\Bigg[ \int_{\{|\xi_k| \ge b_k \}}
\bigg(\frac{f_{p_i}(x_i\xi_i) - f_{p_i}(y_i\xi_i)} {|\xi_i|^{H_i+1/2}} \prod_{1 \le j < i} \frac{f_{p_j}(y_j\xi_j)}
{|\xi_j|^{H_j+1/2}} \prod_{i < j \le N} \frac{f_{p_j}(x_j\xi_j)}{|\xi_j|^{H_j+1/2}}
\bigg)^2  d\xi \, \Bigg]^{1/2}.
\end{split}
\end{equation*}
Using the bounds $|f_{p_i}(x\xi) - f_{p_i}(y\xi)| \le |x - y| |\xi|$ and
$|f_{p_i}(x\xi) - f_{p_i}(y\xi)| \le 2$ for $p_i = 0$ and $1$, we see that the above is at most
\begin{align*}
& c_H \sum_{p} \sum_{i=1}^N \Bigg[ \int_{\{|\xi_i| < a_i\}} \frac{|x_i - y_i|^2}
{|\xi_i|^{2H_i - 1}} \bigg(\prod_{1 \le j < i} \frac{f_{p_j}(y_j\xi_j)}{|\xi_j|^{H_j+1/2}}
\prod_{i < j \le N} \frac{f_{p_j}(x_j\xi_j)}{|\xi_j|^{H_j+1/2}}\bigg)^2 d\xi\, \Bigg]^{1/2}\\
& + c_H \sum_{p} \sum_{i=1}^N \Bigg[ \int_{\{|\xi_i| \ge b_i\}} \frac{4}
{|\xi_i|^{2H_i+1}} \bigg(\prod_{1 \le j < i} \frac{f_{p_j}(y_j\xi_j)}{|\xi_j|^{H_j+1/2}}
\prod_{i < j \le N} \frac{f_{p_j}(x_j\xi_j)}{|\xi_j|^{H_j+1/2}}\bigg)^2 d\xi\,\Bigg]^{1/2}\\
& + c_H \sum_{p} \sum_{i=1}^N \sum_{k \ne i}  
\Bigg[ \int_{\{|\xi_k| \ge b_k\}} \frac{1}{|\xi_k|^{2H_k+1}} 
\bigg(\frac{f_{p_i}(x_i\xi_i) - f_{p_i}(y_i\xi_i)}{|\xi_i|^{H_i+1/2}} 
\prod_{j < i, j \ne k} \frac{f_{p_j}(y_j\xi_j)}{|\xi_j|^{H_j+1/2}}
\prod_{j > i, j \ne k} \frac{f_{p_j}(x_j\xi_j)}{|\xi_j|^{H_j+1/2}}\bigg)^2 d\xi\,\Bigg]^{1/2}.
\end{align*}
Then by \eqref{5.1} and \eqref{fBs1} applied on $\R^{N-1}_+$, 
the above sum is bounded from above by
\begin{align*}
& K \sum_p \sum_{i=1}^N \Bigg[ a_i^{2 - 2H_i} |x_i - y_i|^2 \prod_{1\le j < i} |y_j|^{2H_j}
\prod_{i < j \le N}|x_j|^{2H_j}  \Bigg]^{1/2}\\
& + K \sum_p \sum_{i=1}^N \Bigg[ b_i^{-2H_i} \prod_{1\le j < i} |y_j|^{2H_j}
\prod_{i < j \le N}|x_j|^{2H_j}  \Bigg]^{1/2} + K \sum_p \sum_{i=1}^N \sum_{k\ne i}\Bigg[ b_k^{-2H_k} |x_i-y_i|^{2H_i}  \Bigg]^{1/2}.
\end{align*}
Since $|x_j|, |y_j| \le n$, we obtain \eqref{5.3} for some $c_0$ depending on $n$.
\end{proof}

\begin{lemma}\label{lemma5.2}
Let $F_n = [1/n, n]^N$, $n > 1$. Take $\varepsilon_0 = 1/(2n)$, $c = 3$ and $c' = 6$.
Consider $x \in F_n$, $0 < \rho \le \varepsilon_0$ and 
a compact interval $B_{3\rho}(x) \subset F_n$.
Let $x' = x$.
There is a finite constant $C$ such that for all $y, \bar y \in B_{2\rho}(x)$, and also
for all $y, \bar{y} \in B_{2\rho}(\tilde x) \subset F_n$ with $\Delta(x, \tilde x) \ge 6\rho$,
\[
\big| \E((v_j(y) - v_j(\bar{y}))v_j(x'))\big| \le C \sum_{i=1}^N
\big|y_i - \bar{y}_i \big|^{\delta_i}
\]
for all $j \in \{1, \dots, d\}$, where $\delta_i = \min\{ 2H_i, 1 \}$.
\end{lemma}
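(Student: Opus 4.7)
\medskip

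\noindent\textit{Proof plan for Lemma \ref{lemma5.2}.}
Since the components of the fractional Brownian sheet are i.i.d., it suffices to work with a single component $v_j$. Writing $R_i(a,b) = \tfrac{1}{2}(|a|^{2H_i}+|b|^{2H_i}-|a-b|^{2H_i})$ for the covariance of a one-dimensional fractional Brownian motion of index $H_i$, the product structure of the covariance gives
\[
\E\bigl((v_j(y)-v_j(\bar y))v_j(x')\bigr) = \prod_{i=1}^N R_i(y_i,x_i) - \prod_{i=1}^N R_i(\bar y_i,x_i),
\]
since $x'=x$. The first step is to apply the telescoping identity
\[
\prod_{i=1}^N u_i - \prod_{i=1}^N v_i = \sum_{i=1}^N (u_i-v_i)\prod_{j<i} v_j \prod_{j>i} u_j,
\]
with $u_i = R_i(y_i,x_i)$ and $v_i = R_i(\bar y_i,x_i)$, so that the total difference is a sum of $N$ terms, one per coordinate.

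The next step is to bound the ``non-diagonal'' factors. In both cases (i) and (ii), the points $x,y,\bar y$ (and $\tilde x$ in case (ii)) lie in $F_n=[1/n,n]^N$, so every coordinate belongs to $[1/n,n]$. Hence $|R_j(y_j,x_j)|$ and $|R_j(\bar y_j,x_j)|$ are bounded above by a constant depending only on $n$ and $H_j$ for each $j\neq i$, and the non-diagonal products absorb into the constant $C$.

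It then remains to bound $|R_i(y_i,x_i)-R_i(\bar y_i,x_i)|$ by a multiple of $|y_i-\bar y_i|^{\delta_i}$ with $\delta_i=\min\{2H_i,1\}$. Writing
\[
R_i(y_i,x_i)-R_i(\bar y_i,x_i) = \tfrac{1}{2}\bigl(|y_i|^{2H_i}-|\bar y_i|^{2H_i}\bigr) - \tfrac{1}{2}\bigl(|y_i-x_i|^{2H_i}-|\bar y_i-x_i|^{2H_i}\bigr),
\]
I handle two sub-cases for the function $g(t)=|t|^{2H_i}$. If $H_i\le 1/2$, then $g$ is globally $(2H_i)$-H\"older, so each of the two differences is at most $|y_i-\bar y_i|^{2H_i}=|y_i-\bar y_i|^{\delta_i}$. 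If $H_i>1/2$, then $g$ is $C^1$ with $|g'(t)|=2H_i|t|^{2H_i-1}$, and since $y_i,\bar y_i\in[1/n,n]$ and $|y_i-x_i|,|\bar y_i-x_i|\le n$, the mean value theorem gives a bound by $C_n\,|y_i-\bar y_i|=C_n|y_i-\bar y_i|^{\delta_i}$. The two sub-cases therefore cover $\delta_i=\min\{2H_i,1\}$ uniformly.

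The main (mild) point to be careful about is that the bound must hold in both case (i), where $|y_i-x_i|$ may be arbitrarily small, and case (ii), where $y_i-x_i$ may be of order $1$: this is painless because for $H_i>1/2$ the exponent $2H_i-1$ is positive (so $|t|^{2H_i-1}$ is bounded on $[0,n]$), and for $H_i\le 1/2$ the H\"older estimate is scale-free. Summing the $N$ coordinate contributions yields the required bound $C\sum_{i=1}^N |y_i-\bar y_i|^{\delta_i}$, and the constant $C$ depends only on $n$, $N$, and $H$. The same argument works verbatim in both cases (i) and (ii), since it only uses that all relevant points lie in $F_n$.
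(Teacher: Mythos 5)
Your proposal is correct and follows essentially the same route as the paper: reduce to the difference of the two covariance products, split it coordinate-by-coordinate (your telescoping identity is just the unrolled form of the paper's induction on $A_\ell = U_\ell - V_\ell$), bound the off-diagonal factors by a constant depending on $n$, and control each single-factor difference via subadditivity of $t\mapsto |t|^{2H_i}$ when $2H_i\le 1$ and the mean value theorem when $2H_i>1$. The observation that the estimate holds for all $y,\bar y\in F_n$, making cases (i) and (ii) identical, also matches the paper.
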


\begin{proof}
Let $0 < \rho < \varepsilon_0$.
It suffices to show that there is a finite constant $C$ such that 
for all $y, \bar{y} \in F_n$,
\[
\bigg| \prod_{i=1}^N (|x_i|^{2H_i} + |y_i|^{2H_i} - |x_i - y_i|^{2H_i}) - \prod_{i=1}^N
(|x_i|^{2H_i} + |\bar{y}_i|^{2H_i}
- |x_i - \bar{y}_i|^{2H_i}) \bigg| \le C \sum_{i=1}^N \big|y_i - \bar{y}_i\big|^{\delta_i}.
\]
For $1 \le \ell \le N$, let $A_\ell = U_\ell - V_\ell$, where
\[
U_\ell = \prod_{i=1}^\ell \big(|x_i|^{2H_i} + |y_i|^{2H_i} - |x_i - y_i|^{2H_i}\big),
\quad V_\ell = \prod_{i=1}^\ell \big(|x_i|^{2H_i} + |\bar{y}_i|^{2H_i} - |x_i - \bar{y}_i|^{2H_i}\big).
\]
When $\ell = 1$, we have $|A_1| \le \big| |y_1|^{2H_1} - |\bar{y}_1|^{2H_1} \big| +
\big||x_1 - y_1|^{2H_1} - |x_1 - \bar{y}_1|^{2H_1}\big|$.
If $2H_1 \le 1$, then by the triangle inequality and the inequality 
$(a + b)^{2H} \le a^{2H} + b^{2H}$ for $a, b \ge 0$, we have
$|A_1| \le 2|y_1 - \bar{y}_1|^{2H_1}$;
if $2H_1 > 1$, then we can use
the mean value theorem to get $|A_1| \le 2H_1 n^{2H_1-1} |y_1 - \bar{y}_1|$. 
Thus $|A_1| \le K |y_1 - \bar{y}_1|^{\delta_1}$ for either case. 
For $2 \le \ell \le N$,
\begin{align*}
A_\ell & = U_{\ell-1}(|x_\ell|^{2H_\ell} + |y_\ell|^{2H_\ell} -
|x_\ell - y_\ell|^{2H_\ell}) - V_{\ell-1}(|x_\ell|^{2H_\ell} +
|\bar{y}_\ell|^{2H_\ell} - |x_\ell - \bar{y}_\ell|^{2H_\ell})\\
& = A_{\ell-1}(|x_\ell|^{2H_\ell} + |y_\ell|^{2H_\ell} - |x_\ell - y_\ell|^{2H_\ell})\\
& \quad + V_{\ell-1}(|y_\ell|^{2H_\ell} - |\bar{y}_\ell|^{2H_\ell} + |x_\ell - \bar{y}_\ell|^{2H_\ell} -
|x_\ell - y_\ell|^{2H_\ell}).
\end{align*}
Then $|A_\ell| \le K(|A_{\ell-1}| + |y_\ell - \bar{y}_\ell|^{\delta_\ell})$.
By induction, we deduce that 
$|A_N| \le C \sum_{\ell=1}^N |y_\ell - \bar{y}_\ell|^{\delta_\ell}$.
\end{proof}

The following lemma verifies Assumption \ref{a3} for fractional Brownian sheets.
The sectorial local nondeterminism in Theorem 1 of Wu and Xiao \cite{WX07} provides
more information on the conditional variances among $v(x_1), \dots, v(x_m)$.
\begin{lemma}\label{lemma5.3}
If $x^1, \dots, x^m \in (0, \infty)^N$ are distinct points, then the random variables
$v(x_1), \dots, v(x_m)$ are linearly independent.
\end{lemma}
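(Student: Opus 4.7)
The plan is to reduce to the scalar case and exploit the harmonizable representation \eqref{rep}. Since the $d$ components of $v$ are i.i.d., it suffices to show $v_1(x^1), \dots, v_1(x^m)$ are linearly independent. Equivalently, for any nonzero $b = (b_1, \dots, b_m) \in \R^m$, I will show that $\mathrm{Var}\bigl(\sum_\ell b_\ell v_1(x^\ell)\bigr) > 0$. Using \eqref{rep} and the mutual independence and whiteness of the noises $W_p$, the Wiener isometry gives
\[
\mathrm{Var}\bigg(\sum_{\ell=1}^m b_\ell v_1(x^\ell)\bigg) = c_H^2 \sum_{p \in \{0,1\}^N} \int_{\R^N} \bigg|\sum_{\ell=1}^m b_\ell \prod_{i=1}^N f_{p_i}(x^\ell_i \xi_i)\bigg|^2 \prod_{i=1}^N \frac{d\xi_i}{|\xi_i|^{2H_i+1}}.
\]
If this vanishes, then for every $p$ the integrand is zero Lebesgue-a.e. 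Specializing to $p = (1, \dots, 1)$ and invoking continuity, the function $g(\xi) := \sum_{\ell=1}^m b_\ell \prod_{i=1}^N \sin(x^\ell_i \xi_i)$ is identically zero on $\R^N$.

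It then suffices to prove the elementary statement: if $x^1, \dots, x^m \in (0,\infty)^N$ are distinct and $g \equiv 0$ on $\R^N$, then $b = 0$. I will proceed by induction on $N$. The base case $N = 1$ is the classical fact that $\{\xi \mapsto \sin(a\xi) : a > 0\}$ is linearly independent over $\R$ (proved, for instance, by applying an appropriate iterated differential operator or by Fourier uniqueness). For the inductive step, let $0 < a_1 < a_2 < \dots < a_r$ be the distinct values of the first coordinates among $\{x^\ell_1\}$, and decompose
\[
g(\xi_1, \xi_2, \dots, \xi_N) = \sum_{j=1}^r \sin(a_j \xi_1)\, h_j(\xi_2, \dots, \xi_N),
\]
where $h_j(\xi_2,\dots,\xi_N)$ collects the tail sine products over those indices $\ell$ with $x^\ell_1 = a_j$. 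Fixing $(\xi_2, \dots, \xi_N)$ and applying the $N=1$ case to the $\xi_1$-variable, each $h_j$ vanishes identically. Within the $j$-th group, the projected points $(x^\ell_2, \dots, x^\ell_N)$ remain distinct in $(0,\infty)^{N-1}$ (since the original points are distinct and share first coordinate $a_j$), so the inductive hypothesis forces $b_\ell = 0$ for all $\ell$ in the group.

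The main obstacle is the $N=1$ base case, which is standard but must be cited or proved cleanly; the positivity hypothesis $x^\ell \in (0,\infty)^N$ is essential, because a zero coordinate would produce an identically zero sine factor and destroy the linear independence argument. Apart from this, the grouping-and-induction step is routine, and the reduction from linear independence to positive variance of an arbitrary linear combination relies only on the Gaussian nature of $v_1$. The argument could alternatively be phrased via the strong local nondeterminism of Wu--Xiao~\cite{WX07}, but the harmonizable route is self-contained given the representation \eqref{rep} already established.
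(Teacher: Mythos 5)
Your proof is correct and follows essentially the same route as the paper: reduce to the scalar case via the harmonizable representation \eqref{rep} and the isometry, deduce that the integrand vanishes identically for each $p$, and then peel off coordinates one at a time by grouping indices according to the distinct values of the current coordinate. The only difference is cosmetic: you keep just the all-sine term $p=(1,\dots,1)$ and invoke linear independence of $\{\sin(a\xi):a>0\}$, whereas the paper recombines all $2^N$ terms into $\prod_j\bigl(1-\exp(i x^\ell_j\xi_j)\bigr)$ and uses linear independence of the complex exponentials together with the constant function; both variants hinge on the positivity of the coordinates, as you correctly note.
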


\begin{proof}
Suppose that $a_1, \dots, a_m$ are real numbers such that $\sum_{\ell=1}^m a_\ell v(x^\ell) = 0$
a.s. Recalling the representation \eqref{rep} for $v(x)$, we have
\begin{align*}
0 = \E\bigg(\sum_{\ell=1}^m a_\ell v(x^\ell)\bigg)^2 =
c_H^2 \sum_{p\in \{0, 1\}^N} \int_{\R^N} \Bigg( \sum_{\ell=1}^m
a_\ell \prod_{j=1}^N \frac{f_{p_j}(x^\ell_j\xi_j)}{|\xi_j|^{H_j + 1/2}} \Bigg)^2 d\xi.
\end{align*}
Then for each $p \in \{0, 1\}^N$, $\sum_{\ell=1}^m a_\ell \prod_{j=1}^N f_{p_j}(x^\ell_j\xi_j) = 0$
and, equivalently, $\sum_{\ell=1}^m a_\ell \prod_{j=1}^N \tilde{f}_{p_j}(x^\ell_j\xi_j) = 0$
for all $\xi \in \mathbb{R}^N$,
where $\tilde{f}_0(t) = 1 - \cos{t}$ and $\tilde{f}_1(t) = -i \sin{t}$. It follows that
\begin{equation}\label{5.4}
\sum_{\ell=1}^m a_\ell \prod_{j=1}^N \left(1 - \exp({i x^\ell_j \xi_j})\right)
= \sum_{p \in \{0, 1\}^N}
 \sum_{\ell=1}^m a_\ell \prod_{j=1}^N \tilde{f}_{p_j}(x^\ell_j\xi_j)= 0
\end{equation}
for all $\xi \in \mathbb{R}^N$.
We claim that $a_1 = 0$. Let $L_{1,1}, \dots, L_{1, k_1}$ be partitions of $\{1, \dots, m\}$
obtained from the equivalence classes of the equivalence relation $\sim_1$ defined by
$\ell \sim_1 k$ if and only if $x^\ell_1 = x^k_1$. We may assume $1 \in L_{1,1}$.
Let $\hat{x}^1_1, \dots, \hat{x}^{m_1}_1$ be such that $x^\ell_1 = \hat{x}^k_1$ for all
$\ell \in L_{1,k}$, $k = 1, \dots, m_1$. Let $\xi_2, \dots, \xi_N \in \mathbb{R}$ be
arbitrary and define $c_{1,1}, c_{1,2}, \dots, c_{1,m_1}$ by
\[
c_{1,k} = \sum_{\ell \in L_{1,k}} a_\ell \prod_{j=2}^N \left( 1 - \exp({i x^\ell_j \xi_j}) \right).
\]
Then by \eqref{5.4}, we have
\[
c_{1,1} \exp(i\hat{x}^1_1 \xi_1) + \dots + c_{1,m_1} \exp(i\hat{x}^{m_1}_1 \xi_1) + (c_{1,1} + \dots + c_{1, m_1}) = 0
\]
for all $\xi_1 \in \mathbb{R}$. Since $\hat{x}^1_1, \dots, \hat{x}^{m_1}_1$ are non-zero and distinct,
the functions $\exp(i\hat{x}^1_1 \xi), \dots, \exp(i\hat{x}^{m_1}_1 \xi), 1$ are linearly independent over
$\mathbb{C}$,  we have $c_{1,1} = \dots = c_{1, m_1} = 0$.
In particular, we have
\[ \sum_{\ell \in L_{1,1}} a_\ell \prod_{j=2}^N \left( 1 - \exp({i x^\ell_j \xi_j}) \right) = 0 \]
for all $\xi_2, \dots, \xi_N \in \mathbb{R}$. Next we consider the partitions $L_{2,1},
\dots, L_{2, m_2}$ of $\{1, \dots, m\}$ obtained from equivalence classes of $\sim_2$ defined
by $\ell \sim_2 k$ iff $x^\ell_2 = x^k_2$ (with $1 \in L_{2,1}$).
Then the argument above yields
\[
\sum_{\ell \in L_{1,1} \cap L_{2,1}} a_\ell \prod_{j=3}^N \left( 1 - \exp({i x^\ell_j \xi_j}) \right) = 0.
\]
By induction, we obtain
\[
\sum_{\ell \in L_{1,1}\cap \dots \cap L_{N, 1}} a_\ell = 0.
\]
Note that $L_{1,1} \cap \dots \cap L_{N, 1} = \{1\}$ because $x^1, \dots, x^m$ are
distinct. Hence $a_1 = 0$. Similarly, we can show that $a_\ell = 0$ for  $\ell = 2, \ldots, m$.
\end{proof}

\begin{theorem}
Let $v = \{v(x), x \in \mathbb{R}^N_+ \}$ be an $(N, d)$-fractional Brownian sheet
with Hurst parameter $H \in (0, 1)^N$. If $mQ \le (m-1)d$ where $Q = \sum_{i=1}^N H_i^{-1}$,
then  $v$ has no $m$-multiple points on $(0, \infty)^N$ almost surely.
\end{theorem}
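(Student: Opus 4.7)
The plan is to deduce this theorem from Theorem \ref{main thm} by verifying its three assumptions for the fractional Brownian sheet on $T = (0,\infty)^N$. Since $v$ vanishes almost surely on $\partial\R^N_+$, multiple points there are trivial; and since $(0,\infty)^N$ is a countable union of the compact rectangles $F_n = [1/n, n]^N$, it suffices to establish the absence of $m$-multiple points on each $F_n$. The three preparatory Lemmas \ref{lemma5.1}, \ref{lemma5.2} and \ref{lemma5.3} have essentially done all the work already; what remains is parameter matching and bookkeeping.

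First I would construct the indexed Gaussian noise required by Assumption \ref{a1} from the harmonizable representation \eqref{rep}, defining $v(A,x)$ as the stochastic integral restricted to $\{\xi \in \R^N : \max_i |\xi_i|^{H_i} \in A\}$. The i.i.d.\ components and the independently scattered property of $A \mapsto v(A,x)$ then follow from the corresponding properties of the driving white noises $W_p$, while $v(\R_+, x) = v(x)$ is immediate. Taking $a_0 = 0$ and $\gamma_i = H_i^{-1} - 1$, the first inequality in Assumption \ref{a1}(b) is exactly Lemma \ref{lemma5.1}, and the second inequality holds trivially since $[0, a_0) = \varnothing$. This identification forces $\alpha_i = (\gamma_i + 1)^{-1} = H_i$, and hence $Q = \sum_i \alpha_i^{-1} = \sum_i H_i^{-1}$, which matches the quantity in the hypothesis $mQ \le (m-1)d$.

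Next, Assumption \ref{a2} would be verified via Lemma \ref{lemma5.2}: set $x' = x$, $c = 3$, $c' = 6$ and $\delta_i = \min\{2H_i, 1\}$. The compatibility constraint $\delta_i \in (\alpha_i, 1]$ requires $\min\{2H_i, 1\} > H_i$ and $\min\{2H_i, 1\} \le 1$; both hold for $H_i \in (0, 1)$, the first because $2H_i > H_i$ when $H_i \le 1/2$ and $1 > H_i$ when $H_i > 1/2$. Assumption \ref{a3} is directly Lemma \ref{lemma5.3}. Theorem \ref{main thm} applied to each $F_n$ then yields the conclusion, and a countable union over $n$ completes the argument on $(0,\infty)^N$.

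There is no substantial obstacle in this program, since the analytic heavy lifting — the bounds on $L^2$-increments of the noise pieces, the smoothness of the mixed covariances, and the non-degeneracy of finite-dimensional distributions — has been packaged into the three preparatory lemmas. The only delicate bookkeeping step is the parameter identification, namely confirming $\alpha_i = H_i$ so that the $Q$ appearing in Theorem \ref{main thm} coincides with $\sum_i H_i^{-1}$ of the statement, and checking $\alpha_i < \delta_i \le 1$, together with the routine reduction from $(0,\infty)^N$ to the nested compact rectangles $F_n$.
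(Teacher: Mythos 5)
Your proposal is correct and follows essentially the same route as the paper: the paper's proof likewise consists of invoking Lemmas \ref{lemma5.1}--\ref{lemma5.3} to verify Assumptions \ref{a1}--\ref{a3} on each $F_n = [1/n,n]^N$ and then applying Theorem \ref{main thm} with $\alpha_i = H_i$, $Q = \sum_i H_i^{-1}$. Your parameter checks ($\gamma_i = H_i^{-1}-1 \Rightarrow \alpha_i = H_i$, and $\delta_i = \min\{2H_i,1\} \in (\alpha_i,1]$) and the reduction to compact rectangles are exactly the bookkeeping the paper relies on.
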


\begin{proof}
By the three lemmas above, $\{v(x),\, x \in [1/n,\, n]^N \}$ satisfies the assumptions of
Theorem \ref{main thm} with $Q = \sum_{i=1}^N H_i^{-1}$ for every $n \ge 1$. Hence the
result follows immediately from the theorem.
\end{proof}

We remark that for the case of Brownian sheet i.e. $H_i = 1/2$ for all $i$, the
above result provides an alternative proof for the main results in \cite{dknwx12,dm14} .

\medskip

\subsection{System of stochastic heat equations}

Let $k \ge 1$ and $\beta \in (0, k \wedge 2)$, or $k = 1 = \beta$. Consider the $\mathbb{R}^d$-valued
random field $\{ v(t, x), (t, x) \in \mathbb{R}_+ \times \mathbb{R}^k \}$ defined by
\[
v(t, x) = \int_{\mathbb{R}} \int_{\mathbb{R}^k} e^{-i\xi \cdot x} \frac{e^{-i\tau t} -
e^{-t|\xi|^2}}{|\xi|^2 - i\tau} |\xi|^{-(k-\beta)/2} \, W(d\tau, d\xi),
\]
where $W$ is a $\mathbb{C}^d$-valued space-time Gaussian white noise on $\mathbb{R}^{1+k}$ i.e.\
$W = W_1 + iW_2$ and $W_1, W_2$ are independent $\mathbb{R}^d$-valued space-time Gaussian white
noises on $\mathbb{R}^{1+k}$. According to Proposition 7.2 of \cite{DMX17}, the process $\hat{v}(t, x)
:= \mathrm{Re}\,{v(t, x)}$, $(t, x) \in \mathbb{R}_+ \times \mathbb{R}^k$, has the same law as the mild solution
to the system of stochastic heat equations
\begin{equation}\label{SHE}
\begin{cases}
\displaystyle{\frac{\partial}{\partial t} \hat{v}_j(t, x) = \Delta \hat{v}_j(t, x)
+ \dot{\hat{W}}_j(t, x),} & j = 1, \dots, d,\\
\hat{v}(0, x) = 0,
\end{cases}
\end{equation}
where $\hat{W}$ is an $\mathbb{R}^d$-valued spatially homogeneous Gaussian noise that is white in time with
spatial covariance $|x - y|^{-\beta}$ if $k \ge 1$ and $\beta \in (0, k \wedge 2)$; it is an $\mathbb{R}^d$-valued
space-time Gaussian white noise when $k = 1 = \beta$. Note that, in this case, we take $T = (0, \infty) \times \mathbb{R}^k$. 

By Lemma 7.3 of \cite{DMX17}, Assumption \ref{a1} is satisfied with
\[ \gamma_1 = \frac{2+\beta}{2-\beta} \quad \text{and} \quad 
\gamma_2 = \dots \gamma_{k+1} = \frac{\beta}{2-\beta}, \]
and 
\[ \alpha_1 = \frac{2-\beta}{4} \quad \text{and} \quad
\alpha_2 = \dots = \alpha_{k+1} = \frac{2-\beta}{2}. \]
In this case, 
\[\Delta((t, x), (s, y)) 
= |t - s|^{\frac{2-\beta}{4}} + |x - y|^{\frac{2-\beta}{2}}.\]

In the following lemma, we verify Assumption \ref{a2} for the
Gaussian random field $\hat{v}$.

\begin{lemma}\label{SHEa2}
Let $k \ge 1$, $\beta \in (0, k \wedge 2)$, or $k = 1 = \beta$.
Let $F \subset (0, \infty) \times \R^k$ be a compact interval.
Let $c = 3$, $c' = 6$ and $0 < \varepsilon_0 < 1$ be any small number.
For any $(t, x) \in F$ and $0 < \rho \le \varepsilon_0$ with $B_{3\rho}(t, x) \subset F$,
let $(t', x') = (t , x)$.
Take any $\delta \in (\frac{2-\beta}{2}, (2-\beta)\wedge 1)$.
Then there exists a finite constant $C$ such that 
for all $(s_1, y_1), (s_2, y_2) \in B_{2\rho}(t, x)$, and also for all
$(s_1, y_1), (s_2, y_2) \in B_{2\rho}(\tilde t, \tilde x) \subset F$ with 
$\Delta((t, x), (\tilde t, \tilde x)) \ge 6\rho$, for all $j \in \{1, \dots, d\}$,
\begin{equation}\label{Eq:SHEa2}
|\E[(\hat{v}_j(s_1, y_1) - \hat{v}_j(s_2, y_2)) \hat{v}_j(t',x')]| 
\le C (|s_1 - s_2|^{\frac{2-\beta}{2}} + |y_1 - y_2|^{\delta}).
\end{equation}
\end{lemma}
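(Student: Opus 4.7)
The plan is to exploit the explicit spectral representation of the covariance of $\hat{v}_j$. Starting either from the harmonizable representation of $v$ or from the mild solution of \eqref{SHE} combined with Plancherel's theorem, one derives
\[
\E[\hat{v}_j(s, y)\hat{v}_j(t,x)] \;=\; c\int_{\R^k} \phi(s, t, \xi)\cos(\xi\cdot(y-x))\, |\xi|^{\beta-k}\, d\xi,
\]
where $\phi(s,t,\xi) = (e^{-|s-t||\xi|^2} - e^{-(s+t)|\xi|^2})/(2|\xi|^2)$. By spatial stationarity the right-hand side depends on $y,x$ only through $y-x$, so I would denote it $K(s,t,y-x)$. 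Since $(t',x')=(t,x)$, the target quantity splits as $I_T+I_S$, where $I_T:=K(s_1,t,y_1-x)-K(s_2,t,y_1-x)$ absorbs the time variation and $I_S:=K(s_2,t,y_1-x)-K(s_2,t,y_2-x)$ absorbs the spatial variation.

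To bound $I_T$, I would differentiate $\phi$ explicitly to get $|\partial_s\phi(s,t,\xi)|\leq 2e^{-|s-t||\xi|^2}$, and combine with the pointwise bound $|\phi|\leq C/|\xi|^2$ to obtain $|\phi(s_1,t,\xi)-\phi(s_2,t,\xi)|\leq C\min(|s_1-s_2|,1/|\xi|^2)$. Splitting the $|\xi|$-integral at $|\xi|^2\sim 1/|s_1-s_2|$ then yields $|I_T|\leq C|s_1-s_2|^{(2-\beta)/2}$. For $I_S$, I would use $|\cos(\xi\cdot z_1)-\cos(\xi\cdot z_2)|\leq\min(|\xi||z_1-z_2|,2)$ and split the Fourier integral at $|\xi|\sim 1/|y_1-y_2|$. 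In case~(i) all points cluster near $(t,x)$, so only the algebraic bound $\phi(s_2,t,\xi)\leq C\min(1,1/|\xi|^2)$ is available; a low-frequency contribution of order $|y_1-y_2|$ and a high-frequency contribution of order $|y_1-y_2|^{2-\beta}$ combine to give $|I_S|\leq C|y_1-y_2|^\delta$ for any $\delta\in((2-\beta)/2,(2-\beta)\wedge 1)$. In case~(ii) the separation $\Delta((t,x),(\tilde t,\tilde x))\geq 6\rho$ forces either $|s_i-t|$ or $|y_i-x|$ to be bounded below; in the former situation $\phi(s_2,t,\xi)$ acquires additional Gaussian decay $e^{-c|\xi|^2}$, and in the latter $z\mapsto K(s_2,t,z)$ is smooth near the relevant points, so in either situation the mean value theorem yields the required Lipschitz bound.

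The main obstacle will be the case~(i) estimate on $I_S$: both $|s_2-t|$ and $|y_i-x|$ can be arbitrarily small, so no exponential decay of the spectral density is available, and the exponent $\delta>(2-\beta)/2$, strictly beyond what Cauchy--Schwarz combined with Lemma~\ref{lemma1} would deliver, must be extracted entirely from the balance of the low- and high-frequency parts of the Fourier integral. The open range $\delta<(2-\beta)\wedge 1$ appearing in the statement is exactly the range accessible by this balancing argument; a logarithmic factor at the critical value $\beta=1$ precludes reaching the endpoint.
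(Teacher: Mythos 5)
Your proposal is correct, and the underlying strategy is the same as the paper's: telescope $\hat v_j(s_1,y_1)-\hat v_j(s_2,y_2)$ into a pure time increment and a pure space increment, then bound each by splitting the spectral integral at the frequency scale dictated by the increment ($|\xi|\sim|s_1-s_2|^{-1/2}$, resp.\ $|\xi|\sim|y_1-y_2|^{-1}$). The one real difference is that you first integrate out the $\tau$-variable, reducing the covariance to the single Fourier integral against $\phi(s,t,\xi)=(e^{-|s-t||\xi|^2}-e^{-(s+t)|\xi|^2})/(2|\xi|^2)$, whereas the paper keeps the double $(\tau,\xi)$-integral from the harmonizable representation, splits $e^{i\tau t'}-e^{-t'|\xi|^2}$ into two further pieces, and disposes of the four resulting integrals $J_1,J_1',J_2,J_2'$ via the scaling substitutions $r=|\tau|^{1/2}\tilde r$ and $\tau=r^2\tilde\tau$. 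Your route is computationally a bit shorter and makes the bounds $0\le\phi\le\min(s\wedge t,\,|\xi|^{-2}/2)$ and $|\phi(s_1,t,\xi)-\phi(s_2,t,\xi)|\le C\min(|s_1-s_2|,|\xi|^{-2})$ do all the work in one pass; the paper's stays closer to the representation already used to verify Assumption \ref{a1}. One remark: your separate treatment of case (ii) is unnecessary. Since $(t',x')=(t,x)$ here, your case-(i) estimate uses only the uniform bounds on $\phi$ over the compact $F$ and is therefore valid for \emph{all} $(s_i,y_i)\in F$; no Gaussian decay or mean-value argument is needed, and indeed the paper's proof makes no use of the distinction between (i) and (ii) either (the distinction only becomes substantive for the wave equation, where $(t',x')\ne(t,x)$).
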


\begin{proof}
Suppose $(s_1, y_1), (s_2, y_2) \in B_{2\rho}(t, x)$ or
$(s_1, y_1), (s_2, y_2) \in B_{2\rho}(\tilde t, \tilde x) \subset F$ with 
$\Delta((t, x), (\tilde t, \tilde x)) \ge 6\rho$.
Recall that
\begin{equation*}
\hat v(t, x) = \mathrm{Re} \int_\R \int_{\R^k} e^{-i\xi \cdot x} \frac{e^{-i\tau t} -
e^{-t|\xi|^2}}{|\xi|^2 - i\tau} |\xi|^{-(k-\beta)/2} \, W(d\tau, d\xi).
\end{equation*}
Then for any $j \in \{1, \dots, d\}$,
\begin{align*}
&\quad \E[(\hat v_j(s_1, y_1) - \hat v_j(s_2, y_2)) \hat v_j(t', x')]\\
& \quad = \mathrm{Re} \int_{\R} \int_{\R^k} \bigg( e^{-i\xi \cdot y_1} \frac{e^{-i\tau s_1} -
e^{-s_1|\xi|^2}}{|\xi|^2 - i\tau} - e^{-i\xi \cdot y_2} \frac{e^{-i\tau s_2} -
e^{-s_2|\xi|^2}}{|\xi|^2 - i\tau}\bigg) \overline{\left(e^{-i\xi \cdot x'} \frac{e^{-i\tau t'} -
e^{-t'|\xi|^2}}{|\xi|^2 - i\tau}\right)} \frac{d\tau \,d\xi}{|\xi|^{k-\beta}}\\
&\quad = I_1 + I_2,
\end{align*}
where 
\begin{equation*}
I_1 = \mathrm{Re} 
\int_\R\int_{\R^k} \left(e^{-i\xi \cdot (y_1 - x')} - e^{-i\xi \cdot (y_2 - x')}\right)
\frac{(e^{-i\tau s_1} - e^{-s_1 |\xi|^2})(e^{i\tau t'} - e^{-t'|\xi|^2})}
{(|\xi|^4 + \tau^2)|\xi|^{k-\beta}} \,d\tau\,d\xi
\end{equation*}
and
\begin{equation*}
I_2 = \mathrm{Re} \int_\R \int_{\R^k} e^{-i\xi \cdot(y_2 - x')} 
\frac{(e^{-i\tau s_1} - e^{-i\tau s_2} - e^{-s_1|\xi|^2} + e^{-s_2|\xi|^2})
(e^{i\tau t'} - e^{-t'|\xi|^2})} {(|\xi|^4 + \tau^2)|\xi|^{k-\beta}} \,d\tau\,d\xi.
\end{equation*}

Let us first consider $I_1$. We can write
\begin{equation*}
e^{i\tau t'} - e^{-t'|\xi|^2} 
= (e^{i\tau t'} - 1) + (1 - e^{- t'|\xi|^2}).
\end{equation*}
It follows that $|I_1| \le J_1 + J_1'$, where
\begin{align*}
J_1 = \int_\R  \int_{\R^k} |e^{-i\xi \cdot (y_1 - x')} - e^{-i\xi \cdot (y_2 - x')}|
\frac{2|e^{i\tau t'} - 1|}{(|\xi|^4 + \tau^2) |\xi|^{k-\beta}}\, d\tau\,d\xi
\end{align*}
and 
\begin{align*}
J_1' = \int_\R \int_{\R^k} |e^{-i\xi \cdot (y_1 - x')} - e^{-i\xi \cdot (y_2 - x')}|
\frac{2(1 - e^{-t'|\xi|^2})}{(|\xi|^4 + \tau^2)|\xi|^{k-\beta}}\,d\tau\,d\xi.
\end{align*}
Choose and fix any $\delta \in (\frac{2-\beta}{2}, (2-\beta)\wedge 1)$.
We have the following elementary inequalities 
\begin{equation}\label{eq1}
|e^{iz} - 1| \le 2 \wedge |z| \le 2^{1-\delta}|z|^\delta \quad \text{for all } z \in \R,
\end{equation}
\begin{equation}\label{eq2}
|e^{-x} - e^{-y}| \le 1 \wedge |x - y| \quad  \text{for all } x, y \ge 0.
\end{equation}
For $J_1$, using these inequalities and passing to polar coordinates $r = |\xi|$,
we get that
\begin{align*}
J_1 = \int_\R d\tau \int_0^\infty dr\, 2^{1-\delta}r^\delta |y_1 - y_2|^\delta
\frac{C(1 \wedge |\tau|)}{(r^4 + \tau^2)|r^{1-\beta}},
\end{align*}
where $C$ is a constant depending on $k$ and $F$.
For fixed $\tau$, we use the change of variable $r = |\tau|^{1/2}\tilde r$ to deduce that
\begin{align*}
J_1 &\le C |y_1 - y_2|^\delta 
\int_\R d\tau\,\frac{1 \wedge |\tau|}{\tau^{1+\frac{2-\beta-\delta}{2}}}
\int_0^\infty \frac{d \tilde r}{(\tilde r^4 + 1) \tilde r^{1-\beta - \delta}}.
\end{align*}
Since the integrals in $\tau$ and $\tilde r$ are both finite, 
we have $J_1 \le C|y_1 - y_2|^\delta$, where 
the constant $C$ depends on $k, F, \beta$ and $\delta$.

For $J_1'$, we use \eqref{eq1} and \eqref{eq2}, 
pass to polar coordinates $r = |\xi|$, and then for fixed $r$, 
use the change of variable $\tau = r^2 \tilde\tau$
to get
\begin{align*}
J_1' & \le C|y_1 - y_2|^\delta \int_0^\infty 
\frac{(1 \wedge r^2)dr}{r^{3-\beta-\delta}} 
\int_\R \frac{d\tilde\tau}{1+\tilde\tau^2}.
\end{align*}
Note that the integrals in $r$ and $\tilde\tau$ are both finite.
Hence, we deduce that $|I_1| \le C |y_1 - y_2|^\delta$.

For the other integral $I_2$, we have $|I_2| \le J_2 + J_2'$, where
\begin{equation*}
J_2 = \int_\R \int_{\R^k} \frac{2|e^{-i\tau s_1} - e^{-i\tau s_2}|} {(|\xi|^4 + \tau^2)|\xi|^{k-\beta}} \,d\tau\,d\xi \quad \text{and}\quad
J_2' = \int_\R \int_{\R^k} 
\frac{2|e^{-s_1|\xi|^2} - e^{-s_2|\xi|^2}|}{(|\xi|^4 + \tau^2)|\xi|^{k-\beta}} \,d\tau\,d\xi.
\end{equation*}
For $J_2$, use the first inequality in \eqref{eq1} and use polar coordinates to get that
\begin{align*}
J_2 & \le C \int_\R d\tau \int_0^\infty dr \, \frac{2 \wedge |\tau||s_1 - s_2|}
{(r^4 + \tau^2)r^{1-\beta}}.
\end{align*}
For $\tau$ fixed, by changing variable $r = |\tau|^{1/2} \tilde r$ in the second integral,
\begin{align*}
J_2 &\le C \int_{\R} d\tau \,\frac{2 \wedge |\tau||s_1 - s_2|}{|\tau|^{2-\frac\beta 2}} 
\int_0^\infty \frac{d\tilde r}{\tilde r^4 + 1}.
\end{align*}
The integral in $\tilde r$ is finite. Then splitting the integral in $\tau$ into two parts where
$|\tau| \le \frac{2}{|s_1 - s_2|}$ and $|\tau| > \frac{2}{|s_1 - s_2|}$ leads to
\begin{align*}
J_2 &\le 
C \int_{|\tau| \le \frac{2}{|s_1 - s_2|}} \frac{|\tau||s_1 - s_2|}{|\tau|^{2-\frac\beta 2}} \,d\tau 
+ C \int_{|\tau| > \frac{2}{|s_1 - s_2|}} \frac{2}{|\tau|^{2-\frac\beta 2}} \,d\tau,
\end{align*}
which implies $J_2 \le C |s_1 - s_2|^{\frac{2-\beta}{2}}$.

For $J_2'$, by using \eqref{eq2} and polar coordinates, we have
\begin{align*}
J_2' &\le C \int_\R d\tau \int_0^\infty dr\,\frac{1 \wedge r^2|s_1 - s_2|}
{(r^4 + \tau^2)r^{1-\beta}}.
\end{align*}
Then we can permute the integrals, 
use (for $r$ fixed) the change of variable $\tau = r^2 \tilde\tau$ and 
then split the integral in $r$ into two parts where 
$r \le \frac{1}{|s_1 - s_2|^{1/2}}$ and $r > \frac{1}{|s_1 - s_2|^{1/2}}$
to derive that $J_2' \le C|s_1 - s_2|^{\frac{2-\beta}{2}}$.
This completes the proof of \eqref{Eq:SHEa2}.
\end{proof}

The next lemma verifies Assumption \ref{a3} 
and it can also be found in \cite[Lemma A.5.3]{feipu}.

\begin{lemma}\label{SHEa3}
Let $(t^1, x^1), \dots, (t^m, x^m)$ be distinct points in $(0, \infty) \times \mathbb{R}^k$.
Then the random variables $\hat{v}_1(t^1, x^1), \dots,$ $\hat{v}_1(t^m, x^m)$ are linearly independent.
\end{lemma}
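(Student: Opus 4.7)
The plan is to imitate the strategy used for fractional Brownian sheets in Lemma \ref{lemma5.3}. Assume $\sum_{\ell=1}^m a_\ell \hat v_1(t^\ell, x^\ell) = 0$ almost surely; I want to conclude $a_1 = \cdots = a_m = 0$. First I would decompose the complex white noise as $W = W_1 + iW_2$ with $W_1, W_2$ independent $\mathbb{R}^d$-valued real white noises, and write the kernel
\[
\Phi(t,x;\tau,\xi) = e^{-i\xi\cdot x}\frac{e^{-i\tau t} - e^{-t|\xi|^2}}{|\xi|^2 - i\tau}|\xi|^{-(k-\beta)/2} = A + iB
\]
with $A,B$ real. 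Then $\hat v_1 = \int A\, dW_{1,1} - \int B\, dW_{2,1}$, so computing the variance of $\sum a_\ell \hat v_1(t^\ell,x^\ell)$ and using the isometry property of the Wiener integrals with respect to the independent noises $W_{1,1}, W_{2,1}$ gives
\[
0 = \int_{\R^{1+k}} \Big(\sum_\ell a_\ell A(t^\ell,x^\ell;\tau,\xi)\Big)^2 d\tau\, d\xi + \int_{\R^{1+k}} \Big(\sum_\ell a_\ell B(t^\ell,x^\ell;\tau,\xi)\Big)^2 d\tau\, d\xi.
\]
This forces $\sum_\ell a_\ell \Phi(t^\ell,x^\ell;\tau,\xi) = 0$ for a.e.\ $(\tau,\xi)$, and by continuity for every $(\tau,\xi)$ with $|\xi| > 0$. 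Clearing the common factor $(|\xi|^2 - i\tau)^{-1}|\xi|^{-(k-\beta)/2}$, I obtain the identity
\begin{equation}\label{plan:SHEid}
\sum_{\ell=1}^m a_\ell\, e^{-i\xi\cdot x^\ell}\bigl(e^{-i\tau t^\ell} - e^{-t^\ell|\xi|^2}\bigr) = 0 \qquad \forall\,(\tau,\xi)\in \mathbb{R}\times\R^k.
\end{equation}

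Next I would extract the $a_\ell$ by separating the $\tau$- and $\xi$-dependencies, partitioning as in Lemma \ref{lemma5.3}. Let $\tau_1,\dots,\tau_p$ be the distinct values among $t^1,\dots,t^m$, let $L_j = \{\ell : t^\ell = \tau_j\}$, and set $c_j(\xi) = \sum_{\ell \in L_j} a_\ell e^{-i\xi\cdot x^\ell}$. Then \eqref{plan:SHEid} reads
\[
\sum_{j=1}^p c_j(\xi)\, e^{-i\tau\tau_j} \;=\; \sum_{j=1}^p c_j(\xi)\, e^{-\tau_j|\xi|^2}
\]
for every $\tau\in\R$. Since $\tau_1,\dots,\tau_p$ are distinct and all strictly positive, the functions $\{1, e^{-i\tau\tau_1}, \dots, e^{-i\tau\tau_p}\}$ are linearly independent over $\mathbb{C}$ as functions of $\tau \in \mathbb{R}$, so each $c_j(\xi) = 0$ for every $\xi \in \R^k$.

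Finally, for each $j$, the points $\{x^\ell : \ell \in L_j\}$ are pairwise distinct (because the $(t^\ell, x^\ell)$ are distinct and all $\ell \in L_j$ share the same time coordinate), hence $\{e^{-i\xi\cdot x^\ell} : \ell\in L_j\}$ are linearly independent over $\mathbb{C}$ as functions of $\xi\in\R^k$. The vanishing $c_j \equiv 0$ therefore forces $a_\ell = 0$ for every $\ell\in L_j$, and running this over $j = 1,\dots,p$ gives $a_1 = \cdots = a_m = 0$. The only slightly delicate point is the justification of the factor cancellation and of passing from the a.e.\ identity to the pointwise one, but both are routine given that the kernel is smooth away from $|\xi| = 0$, so I anticipate no real obstacle; the heart of the argument is the decoupling of the $\tau$- and $\xi$-variables via the classical linear independence of exponentials.
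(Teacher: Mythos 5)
Your proposal is correct and follows essentially the same route as the paper's proof: derive the pointwise identity $\sum_\ell a_\ell e^{-i\xi\cdot x^\ell}(e^{-i\tau t^\ell}-e^{-t^\ell|\xi|^2})=0$ from the vanishing variance, group terms by distinct time coordinates, apply linear independence of $\{1,e^{-i\tau \tau_1},\dots,e^{-i\tau \tau_p}\}$ in $\tau$, and then linear independence of the spatial exponentials. Your extra care with the real/imaginary decomposition of the complex noise and with the a.e.-to-everywhere passage is fine but does not change the argument.
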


\begin{proof}
Suppose that $a_1, \dots, a_m$ are real numbers such that $\sum_{j=1}^m a_j \hat{v}_1(t^j, x^j) = 0$ a.s. Then
\[
0 = \E\bigg(\sum_{j=1}^m a_j \hat{v}_1(t^j, x^j) \bigg)^2 =
\int_\mathbb{R} \int_{\mathbb{R}^k} \bigg| \sum_{j=1}^m
a_j e^{-i\xi \cdot x^j} (e^{-i\tau t^j} - e^{-t^j|\xi|^2}) \bigg|^2
\frac{d\tau\, d\xi}{(|\xi|^4+ \tau^2) |\xi|^{k - \beta}}
\]
and thus
$\sum_{j=1}^m a_j e^{-i\xi \cdot x^j} (e^{-i\tau t^j} - e^{-t^j|\xi|^2}) = 0$
for all $\tau \in \mathbb{R}$ and $\xi \in \mathbb{R}^k$.
We claim that $a_j = 0$ for all $j = 1,\dots, m$.
Let $\hat{t}^1, \dots, \hat{t}^p$ be all distinct values of the $t^j$'s.
Fix an arbitrary $\xi \in \mathbb{R}^k$. Then for all $\tau \in \mathbb{R}$, we have
\[
\sum_{\ell=1}^p \bigg( \sum_{j : t^j = \hat{t}^\ell} a_j e^{-i\xi \cdot x^j} \bigg)
e^{-i\tau \hat{t}^\ell} -
\sum_{j=1}^m a_j e^{-i\xi \cdot x^j - t^j|\xi|^2} = 0.
\]
Since the functions $e^{-i\tau \hat{t}^1}, \dots, e^{-i\tau \hat{t}^p}, 1$ are linearly independent over $\mathbb{C}$,
it follows that for all $\xi \in \mathbb{R}^k$, for all $\ell = 1, \dots, p$,
\begin{equation}\label{SHE_sum}
\sum_{j: t^j = \hat{t}^{\ell}} a_j e^{-i\xi \cdot x^j} = 0.
\end{equation}
Since $(t^1, x^1), \dots, (t^n, x^n)$ are distinct, the $x^j$'s that appear in the sum in \eqref{SHE_sum}
are distinct for any fixed $\ell$. By linear independence
of the functions $e^{-i\xi \cdot x^j}$, we conclude that $a_j = 0$ for all $j$.
\end{proof}

The following result solves the existence problem of $m$-multiple points for (\ref{SHE}).
\begin{theorem}
If $m(4+2k)/(2-\beta) \le (m-1) d$, then $\{\hat{v}(t, x), t \in (0, \infty),\, x \in \R^k\}$
has no $m$-multiple points a.s.
\end{theorem}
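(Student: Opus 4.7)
The plan is to directly invoke Theorem \ref{main thm}, so the work reduces to checking that the three assumptions hold and that the given critical dimension condition is exactly $mQ \le (m-1)d$. First I would exhaust $T = (0,\infty) \times \R^k$ as a countable increasing union of compact intervals $F_n = [1/n, n] \times [-n, n]^k$, so that it suffices (by the standard reduction in Section 3) to show that $\hat{v}$ has no $m$-multiple points on each $F_n$ almost surely.

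Next I would read off the parameters for $\hat{v}$ on $F_n$. From the given $\alpha_1 = (2-\beta)/4$ and $\alpha_2 = \cdots = \alpha_{k+1} = (2-\beta)/2$, I compute
\[
Q = \sum_{j=1}^{k+1} \alpha_j^{-1} = \frac{4}{2-\beta} + k \cdot \frac{2}{2-\beta} = \frac{4+2k}{2-\beta},
\]
so the hypothesis $m(4+2k)/(2-\beta) \le (m-1)d$ is exactly $mQ \le (m-1)d$, matching the critical-dimension condition of Theorem \ref{main thm}.

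Then I verify the three assumptions on $F_n$. Assumption \ref{a1} follows from Lemma 7.3 of \cite{DMX17}, which provides the white-noise decomposition of $\hat{v}$ (via the harmonizable representation, with $A \mapsto v(A, (t,x))$ obtained by restricting the space-time integration in the spectral variables $(\tau, \xi)$ appropriately) and yields precisely the exponents $\gamma_j$ listed above. Assumption \ref{a2} is the content of Lemma \ref{SHEa2}, with $x' = x$ and any $\delta \in ((2-\beta)/2, (2-\beta) \wedge 1)$, ensuring $\delta_j > \alpha_j$ for all $j$. Assumption \ref{a3} is Lemma \ref{SHEa3}.

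Finally, with all three assumptions verified on each $F_n$ and with $mQ \le (m-1)d$, Theorem \ref{main thm} yields that $\hat{v}$ has no $m$-multiple points on $F_n$ almost surely. Taking the union over $n$ gives the conclusion on all of $(0, \infty) \times \R^k$. There is no real obstacle here beyond bookkeeping: the technical work has already been done in Lemmas \ref{SHEa2} and \ref{SHEa3} and in \cite{DMX17}, and the proof amounts to assembling these inputs and matching the exponent $(4+2k)/(2-\beta)$ to $Q$.
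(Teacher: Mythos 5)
Your proposal is correct and follows the same route as the paper: verify Assumptions 2.1--2.3 via Lemma 7.3 of \cite{DMX17}, Lemma 5.7 and Lemma 5.8, compute $Q=(4+2k)/(2-\beta)$, and apply Theorem 2.5. The only extra content you add is the explicit bookkeeping (exhaustion by compacts, the computation of $Q$ from the $\alpha_j$, and the check that $\delta_j>\alpha_j$), all of which is consistent with what the paper leaves implicit.
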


\begin{proof}
Assumption \ref{a1} is satisfied with $Q = (4+2k)/(2-\beta)$ by 
Lemma 7.3 of \cite{DMX17}.
Also, Assumptions \ref{a2} and \ref{a3} are satisfied by 
Lemmas \ref{SHEa2} and \ref{SHEa3} above. 
The result follows from Theorem \ref{main thm}.
\end{proof}

\bigskip

\subsection{System of stochastic wave equations}

Let $k \ge 1$ and $\beta \in (1, k \wedge 2)$ or $k = 1 = \beta$.
Consider the $\mathbb{R}^d$-valued random field $\{ v(t, x), (t, x) \in \mathbb{R}_+ \times \mathbb{R}^k \}$
defined by
\[
v(t, x) = \int_{\mathbb{R}} \int_{\mathbb{R}^k} F(t, x, \tau, \xi) |\xi|^{-(k-\beta)/2} \, W(d\tau, d\xi),
\]
where $W$ is a $\mathbb{C}^d$-valued space-time Gaussian white noise on $\mathbb{R}^{1+k}$ and
\[
F(t, x, \tau, \xi) = \frac{e^{-i\xi\cdot x - i\tau t}}{2|\xi|} \bigg[ \frac{1 - e^{it(\tau + |\xi|)}}
{\tau + |\xi|} - \frac{1 - e^{it(\tau - |\xi|)}}{\tau - |\xi|} \bigg]. \]
By Proposition 9.2 of \cite{DMX17}, the process $\hat{v}(t, x) = \mathrm{Re}\,v(t, x)$, $(t, x)
\in \mathbb{R}_+ \times
\mathbb{R}^k$, has the same law as the mild solution to the system of stochastic wave equations
\begin{equation*}
\begin{cases}
\displaystyle{\frac{\partial^2}{\partial t^2}\hat{v}_j(t, x) = \Delta \hat{v}_j(t, x) + \dot{\hat{W}}_j(t, x)}, & j = 1, \dots, d,\\
\hat{v}(0, x) = 0, \quad \displaystyle{\frac{\partial}{\partial t}\hat{v}(0, x) = 0},
\end{cases}
\end{equation*}
where $\hat{W}$ is the spatially homogeneous $\mathbb{R}^d$-valued Gaussian noise as in (\ref{SHE}).

By Lemma 9.6 of \cite{DMX17}, Assumption \ref{a1} is satisfied with
$\gamma_j = \frac{\beta}{2-\beta}$ and $\alpha_j = \frac{2-\beta}{2}$ for all $j$.
In this case, 
\[\Delta((t, x), (s, y)) 
= |t - s|^{\frac{2-\beta}{2}} + |x - y|^{\frac{2-\beta}{2}}.\]

In the following lemma, we check Assumption \ref{a2} for the 
Gaussian random field $\hat{v}$.

\begin{lemma}\label{SWEa2}
Assume that $k = 1 = \beta$ or $1 < \beta < k \wedge 2$.
Let $F \subset (0, \infty) \times \R^k$ be a compact interval.
Let $c = 4$, $c' = 8$ and $0 < \varepsilon_0 < 1$ be small enough such that 
$t - (4\varepsilon_0)^{\alpha_1^{-1}} > 0$  for all $(t, x) \in F$.
For any $(t, x) \in I$ and $0 < \rho \le \varepsilon_0$ with $B_{4\rho}(t, x) \subset F$, 
let $(t', x') = (t - (4\rho)^{\alpha_1^{-1}}, x)$. 
Then there is a finite constant $C$ such that for all 
$(s_1, y_1), (s_2, y_2) \in B_{2\rho}(t, x)$,
and also for all $(s_1, y_1), (s_2, y_2) \in B_{2\rho}(\tilde t, \tilde x) \subset F$ with
$\Delta((t, x), (\tilde t, \tilde x)) \ge 8\rho$,
for all $j \in \{1, \dots, d\}$,
\begin{equation}\label{Eq:SWEa2}
|\E[(\hat v_j(s_1, y_1) - \hat v_j(s_2, y_2)) \hat v_j(t', x')]|
\le C(|s_1 - s_2|^{2-\beta} + |y_1 - y_2|^{2-\beta}).
\end{equation}
\end{lemma}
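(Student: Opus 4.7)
The plan is to mirror the strategy of Lemma~\ref{SHEa2}, working from the harmonizable representation of $\hat v_j$ and applying a telescoping split. Using $(1 - e^{itz})/z = -i\int_0^t e^{isz}\,ds$, a direct manipulation of $F$ gives the clean factorization
\[ F(t, x, \tau, \xi) = \frac{e^{-i\xi \cdot x}}{|\xi|}\,K(t, \tau, \xi), \qquad K(t, \tau, \xi) = \int_0^t e^{-iu\tau}\sin((t-u)|\xi|)\,du, \]
which separates the spatial variable from the time and frequency variables. Plancherel in $\tau$ then collapses the $\tau$-integral,
\[ \int_\R K(s, \tau, \xi)\,\overline{K(t', \tau, \xi)}\,d\tau = 2\pi \int_0^{s \wedge t'}\sin((s-u)|\xi|)\sin((t'-u)|\xi|)\,du, \]
so that $\E[\hat v_j(s, y)\,\hat v_j(t', x')]$ reduces to a real integral on $\R^k$ against $\cos(\xi\cdot(y-x'))$, and the configuration of $t'$ relative to $s$ enters only through $s \wedge t'$.

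Introducing the intermediate point $(s_1, y_2)$, I would decompose $\E[(\hat v_j(s_1, y_1) - \hat v_j(s_2, y_2))\,\hat v_j(t', x')] = I_1 + I_2$. For the spatial piece $I_1$, I apply $|\cos(\xi\cdot(y_1 - x')) - \cos(\xi\cdot(y_2 - x'))| \le 2 \wedge |\xi|\,|y_1 - y_2|$ together with $|\sin((s_1 - u)|\xi|)\sin((t'-u)|\xi|)| \le 1 \wedge |\xi|^2(s_1 - u)(t'-u)$; passing to polar coordinates $r = |\xi|$ gives a radial integral of the form
\[ \int_0^\infty (2 \wedge r\,|y_1 - y_2|)(1 \wedge r^2)\,r^{\beta - 3}\,dr, \]
which splits at $r = 1$ and $r = |y_1 - y_2|^{-1}$ to produce $O(|y_1 - y_2|^{2-\beta})$ provided $\beta \in (1, 2)$. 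For the temporal piece $I_2$, write $K(s_1) - K(s_2)$ as a common-range integral on $[0, s_1 \wedge s_2]$ plus a residual on $[s_1 \wedge s_2, s_1 \vee s_2]$ of length $\le |s_1 - s_2|$; apply the sum-to-product identity $\sin A - \sin B = 2\cos(\tfrac{A+B}{2})\sin(\tfrac{A-B}{2})$ to bound $|\sin((s_1 - u)|\xi|) - \sin((s_2 - u)|\xi|)| \le 2 \wedge |\xi|\,|s_1 - s_2|$, and the analogous radial integral yields $O(|s_1 - s_2|^{2-\beta})$.

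The main obstacle is the lack of smoothing in the wave kernel: unlike the heat kernel $e^{-t|\xi|^2}$, the bound $|\sin(t|\xi|)| \le 1$ is the best uniform control available, so the weight $|\xi|^{-(k-\beta+2)}$ alone must carry the integrability. This pins the admissible range $\beta \in (1, k \wedge 2)$: the lower bound $\beta > 1$ is needed for convergence at $r = \infty$ once the factor $r\,|y_1 - y_2|$ has been extracted, while $\beta < 2$ is needed near $r = 0$. The borderline case $k = 1 = \beta$ requires extracting additional oscillatory cancellation from the product $\sin((s - u)|\xi|)\sin((t'-u)|\xi|)$ via $2\sin A \sin B = \cos(A-B) - \cos(A+B)$ when paired against $\cos(\xi(y - x'))$, in order to avoid a spurious logarithmic factor. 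The choice $t' = t - (4\rho)^{1/\alpha_1}$ plays no essential role beyond ensuring $(t', x') \in (0, \infty) \times \R^k$ and that $(t', x')$ lies a uniform $\Delta$-distance from both the near neighborhood $B_{2\rho}(t, x)$ and the far neighborhood $B_{2\rho}(\tilde t, \tilde x)$; the estimates above apply uniformly in either configuration.
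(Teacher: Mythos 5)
Your Fourier-side setup is correct: the factorization $F(t,x,\tau,\xi)=\frac{e^{-i\xi\cdot x}}{|\xi|}\int_0^t e^{-iu\tau}\sin((t-u)|\xi|)\,du$ and the Plancherel reduction of the $\tau$-integral to $2\pi\int_0^{s\wedge t'}\sin((s-u)|\xi|)\sin((t'-u)|\xi|)\,du$ both check out, and for the colored-noise case $1<\beta<k\wedge 2$ your direct radial estimates do deliver the exponent $2-\beta$: in each of the ranges $r\le 1$, $1\le r\le |y_1-y_2|^{-1}$ (resp.\ $|s_1-s_2|^{-1}$) and beyond, convergence holds precisely because $1<\beta<2$, and the residual over $[s_1\wedge t',\,s_2\wedge t']$ is even of order $|s_1-s_2|$. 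This is a genuinely different and somewhat more self-contained route than the paper's, which for the common-range time integral performs the scaling substitution $\eta=s_1\xi$ and appeals to (a modification of) Lemma 9.6 of \cite{DMX17}, bounding only the residual term by brute force. Your observation that the precise choice $t'=t-(4\rho)^{\alpha_1^{-1}}$ is not essential is defensible within your decomposition, although in the paper it serves to put the near configuration into the clean case $t'\le s_1$ treated by \cite{DMX17}.

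The genuine gap is the case $k=1=\beta$. There your middle-range estimate $\int_1^{|s_1-s_2|^{-1}}|s_1-s_2|\,r^{\beta-2}\,dr$ (and its spatial analogue) produces $|s_1-s_2|\log(1/|s_1-s_2|)$, which is not $O(|s_1-s_2|^{2-\beta})=O(|s_1-s_2|)$. You acknowledge that extra oscillatory cancellation is needed but do not extract it, and doing so on the Fourier side is not routine: one must integrate the product of sines in closed form and pair the resulting $\cos$ and $\sin/r$ terms against $\cos(\xi(y-x'))$, in effect re-deriving the physical-space kernel. The paper sidesteps all of this with a short physical-space argument: since $G(s,y)=\frac12\mathbf{1}_{\{|y|\le s\}}$, the integrand of the covariance is a difference of indicators times an indicator, hence dominated by the \emph{square} of the difference of indicators, so the covariance is bounded by $\E|\hat v_j(s_1,y_1)-\hat v_j(s_2,y_2)|^2\le C(|s_1-s_2|+|y_1-y_2|)$ via Lemma \ref{lemma1} (plus a trivial $O(|s_1-s_2|)$ boundary term when $s_1<t'$). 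As written, your proposal proves the lemma only for $1<\beta<k\wedge 2$; the logarithmically weakened exponent would still suffice to verify Assumption \ref{a2} when $\beta=1$, but it does not establish the inequality \eqref{Eq:SWEa2} as claimed.
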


\begin{proof}
Let $G$ be the fundamental solution of the wave equation.

First, consider the case $k = 1 = \beta$ 
(spatial dimension one with space-time white noise).
In this case, $G(s, y) = \frac{1}{2}{\bf 1}_{\{ |y| \le s \}}$.
Consider $(s_1, y_1), (s_2, y_2) \in B_{2\rho}(t, x)$ or 
$(s_1, y_1), (s_2, y_2) \in B_{2\rho}(\tilde t, \tilde x) \subset F$ with
$\Delta((t, x), (\tilde t, \tilde x)) \ge 8\rho$. 
Without loss of generality, assume $s_1 \le s_2$.

Suppose $t' \le s_1$. Then
\begin{align*}
\E[(\hat v_j(s_1, y_1) - \hat v_j(s_2, y_2))\hat v_j(t', x')]
= \frac{1}{4} \int_0^{t'} dr \int_{\R} d\bar y\, \big( {\bf 1}_{\{|\bar y - y_1| \le s_1 - r\}}
- {\bf 1}_{\{|\bar y - y_2| \le s_2 - r\}}\big) {\bf 1}_{\{ |\bar y - x'| \le t' - r \}}
\end{align*}
and
\begin{align*}
|\E[(\hat v_j(s_1, y_1) - \hat v_j(s_2, y_2))\hat v_j(t', x')]|
\le  \frac{1}{4}\int_0^{s_1} dr \int_{\R} d\bar y\, \big| {\bf 1}_{\{|\bar y - y_1| \le s_1 - r\}}
- {\bf 1}_{\{|\bar y - y_2| \le s_2 - r\}}\big|.
\end{align*}
Since the value of the integrand is either 0 or 1, it follows that
\begin{align*}
|\E[(\hat v_j(s_1, y_1) - \hat v_j(s_2, y_2))\hat v_j(t', x')]|
&\le  \frac{1}{4}\int_0^{s_1} dr \int_{\R} d\bar y\, \big| {\bf 1}_{\{|\bar y - y_1| \le s_1 - r\}}
- {\bf 1}_{\{|\bar y - y_2| \le s_2 - r\}}\big|^2\\
& \le \E|\hat v_j(s_1, y_2) - \hat v_j(s_2, y_2)|^2\\
& \le C (|s_1 - s_2| + |y_1 - y_2|).
\end{align*}
The last inequality follows from Lemma \ref{lemma1} since Assumption \ref{a1} is satisfied.

Suppose $s_1 < t'$. Then
\begin{align*}
\E[(\hat v_j(s_1, y_1) - \hat v_j(s_2, y_2))\hat v_j(t', x')]
& = \frac{1}{4} \int_0^{s_1} dr \int_{\R} d\bar y\, \big( {\bf 1}_{\{|\bar y - y_1| \le s_1 - r\}}
- {\bf 1}_{\{|\bar y - y_2| \le s_2 - r\}}\big) {\bf 1}_{\{ |\bar y - x'| \le t' - r \}}\\
& \quad - \frac{1}{4} \int_{s_1}^{s_2 \wedge t'} dr \int_{\R} d\bar y\, {\bf 1}_{\{|\bar y - y_2| \le s_2 - r\}}
{\bf 1}_{\{ |\bar y - x'| \le t' - r \}}.
\end{align*}
The first integral is bounded by $C(|s_1 - s_2| + |y_1 - y_2|)$ by the same argument 
as above, and the second integral is bounded by $C|s_1 - s_2|$ since the integrand is 
bounded by 1 and its support is also bounded over all possible $(s_2, y_2), (t', x') \in F$ 
by the compactness of $F$.
This proves \eqref{Eq:SWEa2}.

For the case $1 < \beta < k \wedge 2$ (colored noise), 
recall that $\mathcal{F}G(s, \cdot)(\xi) = \sin(s|\xi|)/|\xi|$.
If $t' \le s_1$, \eqref{Eq:SWEa2} can be proved in exactly the same way as in 
Lemma 9.6 of \cite{DMX17}.
Now suppose $s_1 < t'$. For time increments 
where $s_1 \ne s_2$ and $y_1 = y_2 = y$,
we have
\begin{align}
\begin{aligned}\label{t-inc}
&\E[(\hat v_j(s_1, y) - \hat v_j(s_2, y))\hat v_j(t', x')]\\
&= \int_0^{s_1}dr \int_{\R^k} d\xi\, |\xi|^{\beta-2-k} e^{-i\xi\cdot(y-x')} 
\Big( \sin((s_1 - r)|\xi|) - \sin((s_2 - r)|\xi|) \Big) \sin((t'-r)|\xi|)\\
& \quad - \int_{s_1}^{s_2 \wedge t'} dr \int_{\R^k} d\xi \, |\xi|^{\beta - 2 - k}
e^{-i\xi \cdot (y - x')} \sin((s_2 - r)|\xi|) \sin((t'-r)|\xi|).
\end{aligned}
\end{align}
With slight modifications of the proof of Lemma 9.6 in \cite{DMX17},
one can show that the first integral in \eqref{t-inc} is equal to
\begin{align*}
&s_1^{3-\beta} \int_{\R^k} d\eta\, |\eta|^{\beta-2-k} e^{-i\eta\cdot(y-x')} 
\left[ \Big(\cos\big(\frac{s_1 - t'}{s_1}|\eta|\big) - \cos\big(\frac{s_2 - t'}{s_1}|\eta|\big)
\Big) \right.\\
& \hspace{160pt} \left. - \frac{\sin|\eta|}{|\eta|} \Big( \cos\big(\frac{t'}{s_1}|\eta|\big) -
\cos\big( \frac{s_2 - s_1 + t'}{s_1} |\eta| \big) \Big) \right]
\end{align*}
and that it is bounded by $C|s_1 - s_2|^{2-\beta}$. 
By $|\sin(x)| \le |x| \wedge 1$, one can see that
the second integral in \eqref{t-inc} is bounded by $C|s_1 - s_2|$.
Hence \eqref{Eq:SWEa2} is satisfied for time increments.
For space increments where $s_1 = s_2$ and $y_1 \ne y_2$, 
the proof of \eqref{Eq:SWEa2}
is the same as that of Lemma 9.6 in \cite{DMX17}.
\end{proof}

\begin{lemma}\label{SWEa3}
Let $(t^1, x^1), \dots, (t^m, x^m)$ be distinct points in $ T =(0, \infty) \times \mathbb{R}^k$.
Then the random variables $\hat{v}_1(t^1, x^1), \dots,$ $\hat{v}_1(t^m, x^m)$ are linearly independent.
\end{lemma}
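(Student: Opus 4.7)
The plan is to follow exactly the same template as the proof of Lemma \ref{SHEa3}, only with bookkeeping adapted to the more involved wave kernel $F(t,x,\tau,\xi)$. First, I would suppose $\sum_{j=1}^m a_j \hat{v}_1(t^j,x^j) = 0$ almost surely and compute the second moment using the harmonizable representation. Since squaring a vanishing Gaussian gives zero expectation, I obtain
\[
\int_\R\int_{\R^k}\Bigl|\sum_{j=1}^m a_j F(t^j,x^j,\tau,\xi)\Bigr|^2 \frac{d\tau\,d\xi}{|\xi|^{k-\beta}} = 0,
\]
which forces $\sum_j a_j F(t^j,x^j,\tau,\xi)=0$ for almost every $(\tau,\xi)$, and then by continuity in $(\tau,\xi)$ away from the singular set $\{\tau=\pm|\xi|\}$, the identity holds everywhere there.

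Second, I would simplify the kernel. Multiplying the bracket in $F(t^j,x^j,\tau,\xi)$ by $\tau^2-|\xi|^2$ collapses the two terms to
\[
(\tau-|\xi|)\bigl(1-e^{it^j(\tau+|\xi|)}\bigr)-(\tau+|\xi|)\bigl(1-e^{it^j(\tau-|\xi|)}\bigr) = -2|\xi| - (\tau-|\xi|)e^{it^j(\tau+|\xi|)} + (\tau+|\xi|)e^{it^j(\tau-|\xi|)},
\]
and then multiplying by the prefactor $e^{-i\tau t^j}$ from $F$ produces
\[
-2|\xi|\,e^{-i\tau t^j} + 2|\xi|\cos(t^j|\xi|) - 2i\tau \sin(t^j|\xi|).
\]
So the identity becomes
\[
\sum_{j=1}^m a_j e^{-i\xi\cdot x^j}\Bigl[-|\xi|\,e^{-i\tau t^j} + |\xi|\cos(t^j|\xi|) - i\tau\sin(t^j|\xi|)\Bigr] = 0
\]
for all $\tau \in \R$ and all $\xi \in \R^k\setminus\{0\}$.

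Third, I would fix $\xi$ with $|\xi|>0$ and view the left-hand side as a function of $\tau$. Let $\hat t^1,\dots,\hat t^p$ be the distinct values of $\{t^j\}$ with $J_\ell = \{j : t^j = \hat t^\ell\}$, and group by these. The resulting expression is a linear combination of the functions $e^{-i\tau \hat t^1},\dots,e^{-i\tau \hat t^p},\ \tau,\ 1$, which are linearly independent over $\C$ on $\R$ (since $\hat t^\ell>0$ are distinct). Equating the coefficient of each $e^{-i\tau\hat t^\ell}$ to zero yields, for every $\xi\in\R^k\setminus\{0\}$ and every $\ell=1,\dots,p$,
\[
\sum_{j\in J_\ell} a_j\,e^{-i\xi\cdot x^j} = 0.
\]

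Fourth, within each class $J_\ell$ the spatial points $\{x^j\}_{j\in J_\ell}$ are pairwise distinct (because the pairs $(t^j,x^j)$ are distinct and the $t^j$ coincide on $J_\ell$). The exponentials $\xi\mapsto e^{-i\xi\cdot x^j}$ for distinct $x^j$ are linearly independent over $\C$, so $a_j=0$ for all $j\in J_\ell$. Running over $\ell$ concludes that $a_j=0$ for every $j$, proving linear independence; the $d>1$ case follows from the i.i.d.\ components. The only mildly subtle point is justifying that a.e.\ equality of the continuous combination $\sum_j a_j F(t^j,x^j,\tau,\xi)$ upgrades to pointwise equality away from $\{\tau=\pm|\xi|\}$, which is immediate from continuity of $F$ on that open set.
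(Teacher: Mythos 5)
Your proposal is correct and follows essentially the same route as the paper: reduce to $\sum_j a_j F(t^j,x^j,\tau,\xi)=0$, clear denominators to expose a linear combination of $e^{-i\tau t^j}$, $\tau$, and $1$, and then peel off coefficients by linear independence of exponentials first in $\tau$ and then in $\xi$. The only cosmetic difference is that the paper differentiates in $\tau$ before invoking independence of $\{e^{-i\tau\hat t^\ell},1\}$, whereas you directly use independence of $\{e^{-i\tau\hat t^\ell},\tau,1\}$; both are valid.
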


\begin{proof}
Suppose that $a_1, \dots, a_m$ are real numbers such that
$\sum_{j=1}^m a_j \hat{v}_1(t^j, x^j) = 0$ a.s. Then
\[
0 = \E\bigg(\sum_{j=1}^m a_j \hat{v}_1(t^j, x^j) \bigg)^2
= \int_{\mathbb{R}} \int_{\mathbb{R}^k} \bigg| \sum_{j=1}^m
a_j F(t^j, x^j, \tau, \xi) \bigg|^2 \frac{d\tau\, d\xi}{|\xi|^{k-\beta}}.
\]
It follows that $\tau \in \mathbb{R}$ and $\xi \in \mathbb{R}^k$,
$\sum_{j=1}^m a_j F(t^j, x^j, \tau, \xi) = 0$ and thus
\[ \sum_{j=1}^m b_j e^{-i\tau t^j} + c_1 \tau + c_2 = 0,
\]
where $b_j = -2 a_j |\xi| e^{-i\xi \cdot x^j}$,
$$c_1 = -\sum_{j=1}^m a_j e^{-i\xi \cdot x^j}(e^{it^j|\xi|} - e^{-it^j|\xi|})$$
and
$$c_2 = \sum_{j=1}^m a_j |\xi| e^{-i\xi \cdot x^j}(e^{it^j|\xi|} + e^{-it^j|\xi|}).
$$
We claim that $a_j = 0$ for all $j = 1, \dots, m$.
Let $\hat{t}^1, \dots, \hat{t}^p$ be all distinct values of the $t^j$'s.
If we take arbitrary $\xi \in \mathbb{R}^k$ and take derivative with respect to $\tau$, we see that
\[
\sum_{\ell = 1}^p \bigg(-i\hat{t}^\ell \sum_{j: t^j = \hat{t}^\ell} b_j\bigg)e^{-i\tau \hat{t}^\ell} + c_1
= 0
\]
for all $\tau \in \mathbb{R}$. Since the functions $e^{-i\tau \hat{t}^1}, \dots,
e^{-i\tau \hat{t}^p}, 1$ are linearly independent over $\mathbb{C}$, we have
\[ -i\hat{t}^1 \sum_{j: t^j = \hat{t}^\ell} b_j= 0 \]
for all $\ell = 1, \dots, p$.
It implies that for all $\xi \in \mathbb{R}^k$, for all $\ell = 1, \dots, p$,
\begin{equation}\label{SWE_sum}
 \sum_{j: t^j = \hat{t}^\ell} a_j e^{-i\xi\cdot x^j} = 0.
\end{equation}
Since $(t^1, x^1), \dots, (t^m, x^m)$ are distinct, the $x^j$'s that appear in the
sum in \eqref{SWE_sum} are distinct for any fixed $\ell$. By linear independence
of the functions $e^{-i\xi\cdot x^j}$, we conclude that $a_j = 0$ for all $j$.
\end{proof}

\begin{theorem}
Assume $k = 1 =\beta$ or $1 < \beta < k \wedge 2$.
If $m(2+2k)/(2-\beta) \le (m-1)d$, then $\{\hat{v}(t, x), t \in (0, \infty),\, x \in \R^k\}$
has no $m$-multiple points a.s.
\end{theorem}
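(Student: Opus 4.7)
The plan is to verify that the three assumptions of Theorem \ref{main thm} hold for the random field $\hat{v} = \{\hat{v}(t,x), (t,x) \in T\}$ on $T = (0,\infty) \times \R^k$, then invoke the main theorem. This parallels exactly the proof for the stochastic heat equation at the end of the previous subsection.

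First, Assumption \ref{a1} is already established: Lemma 9.6 of \cite{DMX17} shows that the white-noise decomposition $\hat v(A, x) = \mathrm{Re}\int \1_{\{\max_i |\xi_i|^{H_i} \in A\}}(\cdots) W(d\tau, d\xi)$ (or its analog) satisfies Assumption \ref{a1} with $\gamma_j = \beta/(2-\beta)$ and hence $\alpha_j = (2-\beta)/2$ for $j = 1, \dots, k+1$. This gives
\[
Q = \sum_{j=1}^{k+1} \alpha_j^{-1} = (k+1)\cdot \frac{2}{2-\beta} = \frac{2+2k}{2-\beta},
\]
so the hypothesis $m(2+2k)/(2-\beta) \le (m-1)d$ is exactly $mQ \le (m-1)d$.

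Second, Assumption \ref{a2} follows from Lemma \ref{SWEa2}: for each compact interval $F \subset T$, one chooses the reference point $(t', x') = (t - (4\rho)^{\alpha_1^{-1}}, x)$ and the lemma provides the bound
\[
|\E[(\hat v_j(s_1, y_1) - \hat v_j(s_2, y_2))\hat v_j(t', x')]| \le C(|s_1 - s_2|^{2-\beta} + |y_1 - y_2|^{2-\beta})
\]
for both the near regime $(s_1,y_1),(s_2,y_2) \in B_{2\rho}(t,x)$ and the far regime
$(s_1,y_1),(s_2,y_2) \in B_{2\rho}(\tilde t, \tilde x)$ with $\Delta((t,x),(\tilde t, \tilde x)) \ge 8\rho$. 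One reads off $\delta_j = 2-\beta$ for all $j$, which satisfies $\delta_j > \alpha_j = (2-\beta)/2$ (note $2-\beta > (2-\beta)/2$ since $\beta < 2$) and $\delta_j \le 1$ (since $\beta \ge 1$). Thus Assumption \ref{a2} holds with $c = 4$, $c' = 8$, and the chosen $\varepsilon_0$.

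Third, Assumption \ref{a3} is supplied by Lemma \ref{SWEa3}, which shows that for distinct $(t^1,x^1), \dots, (t^m,x^m) \in T$, the random variables $\hat v_1(t^1,x^1), \dots, \hat v_1(t^m,x^m)$ are linearly independent.

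With all three assumptions verified, Theorem \ref{main thm} applies directly on every compact interval $F \subset T$, yielding that $\hat v$ has no $m$-multiple points on $F$ almost surely; taking a countable exhaustion of $T$ by compact intervals gives the result on all of $T$. There is no real obstacle here since the substantive work has been done in Lemmas \ref{SWEa2} and \ref{SWEa3}; the proof is simply an assembly step.

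\begin{proof}
By Lemma 9.6 of \cite{DMX17}, Assumption \ref{a1} is satisfied with $\alpha_j = (2-\beta)/2$ for all $j = 1, \dots, k+1$, so that $Q = (2+2k)/(2-\beta)$. Lemmas \ref{SWEa2} and \ref{SWEa3} verify Assumptions \ref{a2} and \ref{a3}, respectively. The conclusion then follows from Theorem \ref{main thm}.
\end{proof}
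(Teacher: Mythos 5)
Your proof is correct and is essentially identical to the paper's: both simply assemble Lemma 9.3/9.6 of Dalang--Mueller--Xiao (2017) for Assumption \ref{a1} with $Q=(2+2k)/(2-\beta)$, Lemmas \ref{SWEa2} and \ref{SWEa3} for Assumptions \ref{a2} and \ref{a3}, and then invoke Theorem \ref{main thm}. Your added check that $\delta_j = 2-\beta$ lies in $(\alpha_j,1]$ is a nice touch but does not change the argument.
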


\begin{proof}
Assumption \ref{a1} is satisfied with $Q = (2+2k)/(2-\beta)$ 
by Lemma 9.3 of \cite{DMX17}. 
Assumptions \ref{a2} and \ref{a3} are satisfied by 
Lemmas \ref{SWEa2} and \ref{SWEa3} above. 
Hence the result follows from Theorem \ref{main thm}.
\end{proof}



\bigskip


\begin{thebibliography}{99}

\bibitem{Anderson}
T. W.  Anderson,  The integral of a symmetric unimodal function over a symmetric convex
set and some probability inequalities. {\it Proc. Amer. Math. Soc.} {\bf 6} (1955), 170--176.

\bibitem{DKN07}
R. C. Dalang, D. Khoshnevisan and E. Nualart,  Hitting probabilities for systems of
non-linear stochastic heat equations with additive noise.  \textit{Latin Amer. J. Probab.
Statist. (ALEA)} \textbf{3} (2007), 231--271.

\bibitem{dknwx12}
R. C. Dalang,  D. Khoshnevisan, E. Nualart, D. Wu and Y.  Xiao,  Critical {B}rownian sheet
 does not have double points. {\em Ann. Probab.} {\bf 40} (2012), no.~4, 1829--1859.

\bibitem{dm14}
R. C. Dalang and  C. Mueller,  Multiple points of the Brownian sheet in critical
dimensions. {\em Ann. Probab.} {\bf 43} (2015), no.~4, 1577--1593.


\bibitem{DMX17}
R. C. Dalang, C. Mueller and Y. Xiao, Polarity of points for Gaussian random fields.
{\it Ann. Probab.} {\bf 45} (2017), no.\ 6B, 4700--4751.

\bibitem {G81}
A. Goldman, Points multiples des trajectoires de processus Gaussiens.
 {\it Z. Wahrsch. Verw. Gebiete.} {\bf 75} (1981), 481--494.

\bibitem{KRS12}
A. K\"{a}enm\"{a}ki, T. Rajala and V. Suomala, Existence of doubling measures via
generalised nested cubes. {\it Proc. Amer. Math. Soc.} {\bf 140} (2012), no. 9, 3275--3281.

\bibitem {Kono}
N. K\^ono, Double points of a Gaussian sample path. {\it Z. Wahrsch. Verw. Gebiete.}
{\bf 45} (1978), 175--180.

\bibitem{Ledoux}
M. Ledoux, Isoperimetry and Gaussian analysis. {\it Lectures on Probability Theory
and Statistics (Saint-Flour, 1994). Lecture Notes in Math.} {\bf 1648} 165--294.
Springer, Berlin, 1996.

\bibitem{MT}
C. Mueller and R. Tribe, Hitting properties of a random string. {\em Electron. J. Probab.}
{\bf 7} (2002), no. 10, 29 pp.

\bibitem{feipu} F. Pu, The stochastic heat equation: hitting probabilities and the probability 
density function of the supremum via Malliavin calculus. Ph.D. thesis no.~8695 (2018), 
Ecole Polytechnique F\'ed\'erale de Lausanne, Switzerland.

\bibitem{Rosen84}
J. Rosen, Self-intersections of random fields. {\it Ann. Probab.} {\bf 12} (1984), 108--119.

\bibitem{T98}
M. Talagrand, Multiple points of trajectories of multiparameter fractional Brownian motion.
{\it Probab. Theory Relat. Fields} {\bf 112} (1998), 545--563.

\bibitem{T95}
M. Talagrand, Hausdorff measure of trajectories of multiparameter fractional Brownian motion.
{\em Ann. Probab.} {\bf 23} (1995), no.~2, 767--775.

\bibitem{WX07}
D. Wu and Y. Xiao, Geometric properties of fractional Brownian sheets. {\it J.
Fourier Anal. Appl.} {\bf 13} (2007), 1--37.


\bibitem{xiao09}
Y. Xiao, Sample path properties of anisotropic Gaussian random fields. { \em A minicourse on
stochastic partial differential equations,} (D  Khoshnevisan and F  Rassoul-Agha, editors),
{\em Lecture Notes in Math}, \textbf{1962},   Springer, New York (2009), 145--212

\bigskip
\bigskip

\end{thebibliography}
\end{document}